\newtheorem{Theorem}{Theorem}
\newtheorem*{theorem*}{Theorem}
\newtheorem{Proposition}{Proposition}
\newtheorem{Lemma}{Lemma}
\newtheorem{Corollary}{Corollary}
\newtheorem{Definition}{Definition}
\newcommand\norm[1]{\left\lVert#1\right\rVert}
\newcommand\bb[1]{\mathbf{#1}}
\newcommand{\R}{\mathbb{R}}
\newcommand{\C}{\mathcal{C}}
\newcommand{\M}{\mathcal{M}}
\newcommand{\K}{\mathrm{K}}
\newcommand{\BV}{\mathrm{BV}}
\newcommand{\D}{\mathcal{D}}
\newcommand{\Dd}{\mathrm{D}}
\newcommand{\DD}{\mathrm{D}\otimes\mathrm{D}}
\newcommand{\w}{\mathrm{weak}^{\star}}
\begin{document}

\title{Mixed-Derivative Total Variation}

\author{Vincent~Guillemet,
        and~Michael Unser}

% make the title area
\maketitle

% As a general rule, do not put math, special symbols or citations
% in the abstract or keywords.
\begin{abstract}
The formulation of norms on continuous-domain Banach spaces with exact pixel-based discretization is advantageous for solving inverse problems (IPs). In this paper, we investigate a new regularization that is a convex combination of a TV term and the $\M(\R^2)$ norm of mixed derivatives. We show that the extreme points of the corresponding unit ball are indicator functions of polygons whose edges are aligned with either the $x_1$- or $x_2$-axis. We then apply this result to construct a new regularization for IPs, which can be discretized exactly by tensor products of first-order B-splines, or equivalently, pixels. Furthermore, we exactly discretize the loss of the denoising problem on its canonical pixel basis and prove that it admits a unique solution, which is also a solution to the underlying continuous-domain IP.

\end{abstract}

\begin{IEEEkeywords}
Geometric measure theory, continuous-domain inverse problems, tensor product, representer theorem, denoising. 
\end{IEEEkeywords}

\IEEEpeerreviewmaketitle

\section{Introduction}

The study of continuous-domain formulations of inverse problems (IP) is crucial in computational imaging \cite{bertero2021introduction,mccann2019biomedical}. In this setting, the objective is to recover an unknown image $f:\R^2\to\R$ from noisy measurements. Due to the pixel-based geometry of images, such problems are commonly addressed by seeking an approximate solution $f^{\star}$ in
\begin{equation}
    \mathcal{X}_1=\left\{f=\sum_{\bb{k}\in\mathbb{Z}^2}\bb{a}[\bb{k}]\mathbbm{1}_{E_{\bb{k}}}\right\},
\end{equation}
where the sets $(E_{\bb{k}})_{\bb{k}\in\mathbb{Z}^2}$ form a pixel-based decomposition of $\R^2$. In several acquisition modalities, the forward measurement process is not naturally associated with a canonical pixel basis, in which case $\mathcal{X}_1$ must be designed by the practitioner. In this context, it is advantageous to identify a continuous-domain Banach space $\mathcal{X}$ that contains all pixel-type basis functions and within which $\mathcal{X}_1$ can be refined. The search space $\mathcal{X}$ is typically selected jointly with the regularization functional $\text{Reg}(\cdot)$ \cite{gupta2018continuous,unser2019native}, which encodes prior knowledge about the geometry of $f^{\star}$ when the IP is ill-posed.

\subsection{TV Regularization}
A standard regularization for the pixel basis is the total variation (TV) \cite{rudin1992nonlinear}, which, for $f\in\mathcal{X}=\mathrm{BV}(\Omega)$ \cite{ambrosio2000functions,evans2018measure} and in its anisotropic form, is given by
\begin{multline}
\label{eq:1.A.2}
    \norm{\nabla\{f\}}_{\M(\R^2)^2} = \\
    \left(\norm{[\mathrm{I}\otimes\Dd]\{f\}}_{\M(\R^2)} + \norm{[\Dd\otimes\mathrm{I}]\{f\}}_{\M(\R^2)}\right),
\end{multline}
where $\norm{\cdot}_{\M(\R^2)}$ denotes the norm on the space of bounded measures \cite{rudin}.  
This norm has the advantage of being exactly discretizable on the pixel basis, which makes it particularly suitable for image processing. Specifically, when $f\in\mathcal{X}_1$ is a linear combination of square pixels with weights $\bb{a}$, \eqref{eq:1.A.2} can be discretized as  
\begin{equation}
\label{eq:1.A.3}
    c\left(\norm{\bm{h}_{0,1}\ast\bb{a}}_{\ell_1} + \norm{\bm{h}_{1,0}\ast\bb{a}}_{\ell_1}\right),
\end{equation}
where $c$ is determined by the pixel size, and  
\begin{equation}
    \bm{h}_{0,1} = \begin{bmatrix}
        -1\\
        1
    \end{bmatrix}, \quad
    \bm{h}_{1,0} = \begin{bmatrix}
        -1 & 1
    \end{bmatrix}.
\end{equation}
The search for the optimal $f^{\star} \in \mathcal{X}_1$ thus reduces to a compressed sensing problem \cite{donoho2006compressed, candes2007sparsity, bruckstein2009sparse}, which can be solved using classical algorithms such as FISTA \cite{beck2009fast} or primal-dual methods \cite{zhu2008efficient,esser2010general,chambolle2011first,condat2013primal}. One reason for the popularity of TV regularization is the strong theoretical foundation on which it is built.  

The norm $\norm{\nabla\{\cdot\}}_{\M(\R^2)^2}$ is not only well defined for pixels, but also for indicator functions of Caccioppoli sets \cite{caccioppoli1952misura,de1954teoria}, i.e., sets that can be approximated in measure by polygons.  
Within the framework of representer theorems (RTs) for Banach spaces \cite{flinth2019exact,unser2017splines,unser2021unifying,boyer2019representer}, Bredies and Carioni \cite{bredies2020sparsity} built upon the results of \cite{fleming1957functions,fleming1960functions,ambrosio2001connected} to show that TV regularization promotes solutions to inverse problems that are finite linear combinations of simple sets,  Caccioppoli sets whose boundaries are represented by a unique Jordan curve \cite{ambrosio2001connected}.  

In contrast to the spline solutions promoted by the one-dimensional TV counterpart $\norm{\Dd\{\cdot\}}_{\M(\R)}$, the generality of these simple sets prevents them from benefiting from the convenient spline structure \cite{unser1999splines,debarre2019b,debarre2022uniqueness,guillemet2025convergence,fageot2025variational}.

\subsection{Mixed-Derivative Regularization}
One approach to promoting pixel-type solutions with a spline structure is to impose additional regularity. In \cite{guillemet2025tensor}, the authors considered the search space $\M_{\DD}(\K)$, consisting of functions $f$ whose generalized gradient $\nabla\{f\}$ and generalized mixed derivative $[\DD]\{f\}$ are bounded measures supported in $\K = [0,1]\times[0,1]$. They showed that the extreme points of the unit ball associated with the norm $\norm{[\DD]\{\cdot\}}_{\M(\R^2)}$ are tensor products of $\Dd$-splines. However, this formulation is not entirely satisfactory because the regularization $\norm{[\DD]\{\cdot\}}_{\M(\R^2)}$ does not control the perimeter of sets, but only their number of corners. As a result, it may promote solutions containing components that are undesirable, such as indicators of unbounded sets.

\subsection{Contributions}
In this paper, for $\theta \in ]0,1]$, we regularize with the new norm  
\begin{equation}
    \norm{\cdot}_{\theta} = \theta \norm{[\DD]\{\cdot\}}_{\M(\R^2)} + (1-\theta) \norm{\nabla\{\cdot\}}_{\M(\R^2)^2},
\end{equation}
which generalizes the anisotropic TV. Our contributions are fourfold.

\textbf{Functional Framework:} We define the search space $\M_{\DD,0}(\K)$ as the set of functions $f \in \M_{\DD}(\K)$ supported in $\K$. We prove that this space is a dual Banach space for the norm $\norm{[\DD]\{\cdot\}}_{\M}$, and we establish that all $\norm{\cdot}_{\theta}$-norms are equivalent.  

\textbf{Extreme Point Characterization:} We show that the extreme points of the unit ball of the convex cone of positive functions $\left(\M_{\DD,0}^+(\K), \norm{\cdot}_{\theta}\right)$ are indicator functions of polygons whose edges are parallel to either the $x_1$- or $x_2$-axis. We then apply this result to the regularization of abstract IPs with finite data.

\textbf{Resolution of IPs:} We demonstrate that tensor-product B-splines of order 1 exactly discretize a broad class of IP loss functionals, and that $\norm{\cdot}_{\theta}$ can be discretized exactly as  
\begin{equation}
    \theta c \norm{\bm{h}_{0,1} \ast \bb{a}}_{\ell_1} +
    \theta c \norm{\bm{h}_{1,0} \ast \bb{a}}_{\ell_1} +
    (1-\theta) \norm{\bm{h}_{1,1} \ast \bb{a}}_{\ell_1},
\end{equation}
where $\bm{h}_{1,1} = \bm{h}_{0,1} \otimes \bm{h}_{1,0}$. We further prove that these exact discretizations converge, in an appropriate sense, to the continuous-domain IP as the pixel size tends to zero.

\textbf{Application to Denoising:} We prove that the canonical pixel-based discretization of the denoising loss admits a unique solution, which also solves the corresponding continuous-domain problem. Finally, we numerically assess the efficiency of the proposed method.

\subsection{Related Works}
\textbf{Higher-Order Generalization of TV.}  
Bredies et al.\ introduced the generalized total variation (GTV) \cite{bredies2010total,bredies2014regularization,bredies2020sparsity}, which incorporates all derivatives up to a specified order $k$. In particular, the second-order TV ($k=2$) is defined on the space of functions with bounded Hessian \cite{demengel1984fonctions} and, for a smooth function $f$, takes the form  
\begin{equation}
    TV^{(2)}\{f\} = \int_{\R^2} \norm{\nabla^{2}\{f\}(\bb{x})}_F \,\mathrm{d}\bb{x},
\end{equation}
where $\norm{\cdot}_F$ denotes the Frobenius norm. Lefkimiatis et al.\ extended this notion by introducing the Hessian TV (HTV), in which the Frobenius norm is replaced by a Schatten-$p$ norm \cite{lefkimmiatis2011hessian,lefkimmiatis2013hessian,aziznejad2023measuring}. Regularization with HTV has been employed for the learning of continuous piecewise-linear (CPWL) mappings \cite{9655475,pourya2023delaunay,pourya2024box}. Ambrosio et al.\ showed that the closure, in the unit ball of the space of functions with finite HTV, of the set of CPWL extreme points contains all extremal points \cite{ambrosio2023functions,ambrosio2024linear}.  

Our regularization lies, in terms of regularity constraints, between TV and HTV. Specifically, it enforces more regularity than TV due to the presence of the $\DD$ term, but less than HTV because it omits the terms $\mathrm{I} \otimes \Dd^2$ and $\Dd^2 \otimes \mathrm{I}$. In particular, HTV is not well defined on pixel basis.

\hfill\\
\textbf{Extreme Points of Mixed Norms.}  
We investigate the extreme points of the unit ball associated with our regularization norm, which takes the form  
\[
\norm{\cdot} = \theta \norm{\cdot}_A + (1-\theta) \norm{\cdot}_B,
\]
where $\norm{\cdot}_A$ and $\norm{\cdot}_B$ are two norms whose extreme points are known. When the search space $\mathcal{X}$ can be decomposed as a direct sum $\mathcal{X}_A \oplus \mathcal{X}_B$, with $\norm{\cdot}_A\vert_{\mathcal{X}_B} = 0$ and $\norm{\cdot}_B\vert_{\mathcal{X}_A} = 0$, the extreme points of $\norm{\cdot}$ have been systematically characterized \cite{unser2022convex}. However, when such a direct-sum decomposition is not possible, as is the case in this work, the description of the extreme points becomes significantly more challenging. In finite dimensions, they are located at the intersections of the hyperplanes that define the boundaries of the unit balls \cite{dubins1962extreme,parhi2023sparsity,unser2025universal}. To the best of our knowledge, no analogous characterization is currently available in infinite-dimensional settings.

\hfill\\
\textbf{Continuous Data.}  
TV was originally introduced as a regularization for the denoising loss $\norm{f - \cdot}_{\mathcal{L}_2(\Omega)}$, where $\Omega$ is a domain and $f$ denotes the continuous data \cite{rudin1992nonlinear}. Subsequent works have deepened the theoretical understanding of this approach. The existence and uniqueness of solutions for this problem, as well as for the TV flow problem, were established in \cite{bellettini2002total,andreu2001minimizing,andreu2002some}. The regularity of solutions was analysed in \cite{caselles2011regularity}, while discretization and convergence properties, including convergence rates, were investigated in \cite{lai2009convergence,bartels2012total,wang2011error}. Chambolle and Pock provide a comprehensive overview of TV discretization and convergence techniques in \cite{chambolle2021approximating}. In the present work, we focus on the study of IPs with finite data.

\section{Theoretical Formulation}
\subsection{Mixed-Derivative Native Space}
Let $\K = [0,1]^2$ denote the unit square. We define the native space $\M_{\DD,0}(\K)$ as  
\begin{equation}
\label{eq:2.A.8}
\left\{ f \in \D'(\R^2) :\quad
\begin{cases}
    [\DD]\{f\} \in \M(\K), \\
    \forall \phi \in \C_c^{\infty}(\K^c),\ \langle f, \phi \rangle = 0
\end{cases}
\right\},
\end{equation}
where $\M(\K)$ denotes the space of bounded measures supported in $\K$, and $\K^c = \R^2 \setminus \K$.  
The second condition in \eqref{eq:2.A.8} enforces that the function $f$ vanishes outside $\K$. In Proposition~\ref{prop:carac}, we provide a more practical, integral-based characterization of the functions $f$ belonging to $\M_{\DD,0}(\K)$. To this end, we introduce the integral transform $[\DD]^{-1}\{\cdot\}$, defined for all $m \in \M(\K)$ by  
\begin{align}
    [\DD]^{-1}\{m\} &= \int_{\K} u(\cdot - x_1)\, u(\cdot - x_2) \,\mathrm{d}m(x_1,x_2)\nonumber \\
    &= (u \otimes u) \ast m,
\end{align}
where $u$ is the heaviside function.
\begin{Proposition}
\label{prop:carac}
A function $f$ belongs to $\M_{\DD,0}(\K)$ if and only if it belongs to  
\begin{equation}
\label{eq:carac}
    \left\{ f \in [\DD]^{-1}\{\M(\K)\} : 
    \forall \phi \in \C_c^{\infty}(\K^c),\ \langle f, \phi \rangle = 0
    \right\}.
\end{equation}
\end{Proposition}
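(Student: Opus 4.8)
The plan is to establish the two set inclusions separately, the pivotal identity being $[\DD]\{u\otimes u\}=(\Dd u)\otimes(\Dd u)=\delta\otimes\delta$, which makes $[\DD]$ and $[\DD]^{-1}$ mutually inverse on the spaces at hand. For the inclusion showing that every element of \eqref{eq:carac} lies in $\M_{\DD,0}(\K)$, suppose $f=(u\otimes u)\ast m$ with $m\in\M(\K)$ and that $f$ vanishes outside $\K$. Since $m$ has compact support the convolution is well defined and differentiation commutes with it, so $[\DD]\{f\}=([\DD]\{u\otimes u\})\ast m=(\delta\otimes\delta)\ast m=m\in\M(\K)$; together with the vanishing condition this gives $f\in\M_{\DD,0}(\K)$. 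This direction is essentially immediate.

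The substantive direction is the reverse. Given $f\in\M_{\DD,0}(\K)$, I would set $m:=[\DD]\{f\}\in\M(\K)$ and $g:=[\DD]^{-1}\{m\}=(u\otimes u)\ast m$, so that the same commutation computation yields $[\DD]\{g\}=m$ and hence $[\DD]\{f-g\}=\partial_{x_1}\partial_{x_2}(f-g)=0$ in $\D'(\R^2)$. I would then invoke the standard description of the kernel of the mixed derivative: a distribution annihilated by $\partial_{x_1}\partial_{x_2}$ splits as $f-g=p(x_1)+q(x_2)$ for some $p,q\in\D'(\R)$. (This follows by observing that $\partial_{x_2}(\partial_{x_1}(f-g))=0$ forces $\partial_{x_1}(f-g)$ to be independent of $x_2$, and then integrating once in $x_1$.) It remains to prove $p(x_1)+q(x_2)\equiv0$, i.e.\ $f=g$.

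To pin down $p$ and $q$ I would exploit supports. The function $g$ is the oriented cumulative measure $g(y)=m(\{x:x_1\le y_1,\ x_2\le y_2\})$, so because $\K\subset[0,\infty)^2$ it vanishes identically on the half-planes $\{y_1<0\}$ and $\{y_2<0\}$; and $f$ vanishes on $\K^c\supset\{y_1<0\}\cup\{y_2<0\}$ by hypothesis. Hence $p(x_1)+q(x_2)=0$ on $\{x_1<0\}\times\R$ and on $\R\times\{x_2<0\}$. Restricting the first relation and applying $\partial_{x_1}$ shows $p$ is constant, say $\equiv a$, on $(-\infty,0)$, and then $p+q=0$ there forces $q\equiv-a$ on all of $\R$; substituting back gives $f-g=p(x_1)-a$, and the vanishing on $\{y_2<0\}$ forces $p\equiv a$. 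Thus $f-g\equiv0$ and $f=g\in[\DD]^{-1}\{\M(\K)\}$.

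The main obstacle is this reverse direction, and specifically the fact that $g=[\DD]^{-1}\{m\}$ need not be supported in $\K$: the only support information available is that $g$, like $f$, vanishes on the two half-planes to the left of and below $\K$. The crux is to show that this partial information already annihilates the additive ambiguity $p(x_1)+q(x_2)$ in the kernel of $[\DD]$. I expect the only delicate point to be the distributional, rather than pointwise, handling of these restrictions — in particular justifying that $\partial_{x_1}(f-g)=0$ on an open half-plane yields a genuine constant there — which ultimately reduces to the one-dimensional fact that the kernel of $\Dd$ on an interval consists of the constants.
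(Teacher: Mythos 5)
Your proof is correct and takes essentially the same route as the paper: both directions match, and in the converse you, like the paper, write $f-[\DD]^{-1}\{[\DD]\{f\}\}$ as an element of the null space of $\DD$ (a term of the form $\phi_1\otimes 1+1\otimes\phi_2$) and then annihilate it using the fact that it must vanish outside $[0,\infty[^2$. The only difference is that you spell out the final step explicitly (constancy of $p$ on a half-line, propagation to $q\equiv-a$ on all of $\R$, then $p\equiv a$), a step the paper compresses into the phrase ``which directly implies that $\phi_1\otimes 1+1\otimes\phi_2=0$.''
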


\begin{proof}[\textbf{Proof of Proposition \ref{prop:carac}}]
Clearly, if $f$ satisfies \eqref{eq:carac}, then $f \in \M_{\DD,0}(\K)$. To prove the converse, assume $f \in \M_{\DD,0}(\K)$ and define $m = [\DD]\{f\}$. We claim that 
\begin{equation}
    [\DD]^{-1}\{m\} = f
\end{equation}
which proves the proposition. Observe that $[\DD]^{-1}\{m\}$ may differ from $f$ only by an element of the null space of $\DD$ in $\D'(\R^2)$. Hence, one can write  
\begin{equation}
    f = [\DD]^{-1}\{m\} + \phi_1 \otimes 1 + 1 \otimes \phi_2,
\end{equation}
for some $(\phi_1, \phi_2) \in \D'(\R)^2$. Note that  
\begin{align}
    \text{supp}(m) \subset \K \Rightarrow & \text{supp}\left( [\DD]^{-1}\{m\} \right) \subset [0, \infty[^2 \nonumber\\
    \Rightarrow & \text{supp}(\phi_1 \otimes 1 + 1 \otimes \phi_2) \subset [0, \infty[^2,
\end{align}
which directly implies that $\phi_1 \otimes 1 + 1 \otimes \phi_2 = 0$ and proves the claim.
\end{proof}

Let $\theta \in ]0,1]$, we equip $\M_{\DD,0}(\K)$ with the mixed norm $\norm{\cdot}_{\theta}$ defined as
\begin{equation}
    \norm{\cdot}_{\theta} = \theta \norm{[\DD]\{\cdot\}}_{\M(\K)} + (1-\theta) \norm{\nabla\{\cdot\}}_{\M(\K)^2},
\end{equation}
with 
\begin{equation}
    \norm{\nabla\{\cdot\}}_{\M(\K)^2} = \norm{[\mathrm{I} \otimes \Dd]\{\cdot\}}_{\M(\K)} + \norm{[\Dd \otimes \mathrm{I}]\{\cdot\}}_{\M(\K)}.
\end{equation}
In the sequel, we write $\norm{\cdot}_{\M}$ instead of $\norm{\cdot}_{\M(\K)}$ and $\norm{\cdot}_{\M^2}$ instead of $\norm{\cdot}_{\M(\K)^2}$. The key properties of $\left(\M_{\DD,0}(\K), \norm{\cdot}_{\theta}\right)$ are summarised in Theorem \ref{th:normequivalence}.

\begin{Theorem}
\label{th:normequivalence}
    Let $\theta \in ]0,1]$. Then, the vector space $\big(\M_{\DD,0}(\K), \norm{\cdot}_{\theta}\big)$ satisfies the following properties:
    \begin{itemize}
        \item[1.] All norms $\norm{\cdot}_{\theta}$ are well-defined and equivalent; that is, for all $(\theta,\theta') \in ]0,1]^2$, there exists a constant $C > 0$ such that 
        \begin{equation}
        \label{eq:normequivalence} 
            \forall f \in \M_{\DD,0}(\K): \quad \norm{f}_{\theta} \leq C \norm{f}_{\theta'}.
        \end{equation}
        \item[2.] It is a Banach space.
        \item[3.] Equipped with the norm $\norm{[\DD]\{\cdot\}}_{\M}$, it is the dual space of $[\DD]\{\C_0(\R^2)\} / M_0$, where
        \begin{multline}
            M_0 = \big\{ v \in [\DD]\{\C_0(\R^2)\} : \\
            \forall f \in \M_{\DD,0}(\K), \langle f, v \rangle = 0 \big\},
        \end{multline}
        and $[\DD]\{\C_0(\R^2)\}$ is equipped with the norm
        \begin{equation}
            \norm{[\DD]\{\phi\}} = \norm{\phi}_{\infty(\R^2)}.
        \end{equation}
    \end{itemize}
\end{Theorem}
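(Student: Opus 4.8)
The plan is to route all three claims through the single linear map $[\DD]$. By Proposition~\ref{prop:carac} it identifies $f\in\M_{\DD,0}(\K)$ with the measure $m=[\DD]\{f\}$, the inverse being $[\DD]^{-1}\{m\}=(u\otimes u)\ast m$. My first step is to pin down the range of this map. Writing $f(x_1,x_2)=(u\otimes u)\ast m\,(x_1,x_2)=m\big((-\infty,x_1]\times(-\infty,x_2]\big)$ and requiring $f$ to vanish on $\K^c$, I would show that $f\in\M_{\DD,0}(\K)$ if and only if $m$ is supported in $\K$ and has vanishing marginals $\pi_{1\#}m=\pi_{2\#}m=0$ (for instance $\mathbbm{1}_\K$ corresponds to $m=\delta_{(0,0)}+\delta_{(1,1)}-\delta_{(0,1)}-\delta_{(1,0)}$, whose marginals do vanish). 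Call this subspace $\N\subset\M(\K)$; the marginal conditions are precisely what will tame the gradient.

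The technical core, and the step I expect to be the main obstacle, is the a priori estimate $\norm{\nabla\{f\}}_{\M^2}\le 2\norm{[\DD]\{f\}}_{\M}$, which simultaneously shows that $\norm{\cdot}_{\theta}$ is finite and supplies half of the equivalence. Differentiating the convolution gives $[\I\otimes\Dd]\{f\}=(u\otimes\delta)\ast m$, and testing against $\varphi\in\C_0(\R^2)$ with $\norm{\varphi}_{\infty}\le1$ produces $\int_\K\big(\int_s^{\infty}\varphi(x_1,t)\,\mathrm{d}x_1\big)\,\mathrm{d}m(s,t)$. Splitting $\int_s^{\infty}=\int_s^1+\int_1^{\infty}$, the tail depends on $t$ alone and therefore integrates to zero against the vanishing marginal, leaving a term bounded by $\int_\K(1-s)\,\mathrm{d}\lvert m\rvert\le\norm{m}_{\M}$; the argument for $[\Dd\otimes\I]\{f\}$ is symmetric. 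It is worth stressing that this cancellation is exactly what membership in $\N$ buys: for a generic $m\in\M(\K)$ the measure $(u\otimes\delta)\ast m$ spreads mass along horizontal rays to infinity and has infinite total variation, so the estimate genuinely relies on the marginal constraint (and degrades as $\theta\to0$, explaining the exclusion of $\theta=0$). Combined with the trivial bound $\theta\norm{[\DD]\{f\}}_{\M}\le\norm{f}_{\theta}$, this yields $\theta\norm{[\DD]\{f\}}_{\M}\le\norm{f}_{\theta}\le(2-\theta)\norm{[\DD]\{f\}}_{\M}$ for $\theta\in\,]0,1]$, so every $\norm{\cdot}_{\theta}$ is equivalent to $\norm{[\DD]\{\cdot\}}_{\M}$, hence to one another; non-degeneracy is clear since $[\DD]\{f\}=0$ forces $f=0$.

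For completeness it then suffices, by equivalence, to treat $\theta=1$. Since $[\DD]$ is by construction an isometry from $(\M_{\DD,0}(\K),\norm{[\DD]\{\cdot\}}_{\M})$ onto $(\N,\norm{\cdot}_{\M})$, I only need $\N$ to be closed in the Banach space $\M(\K)$. Both the support condition and the two marginal conditions are intersections of kernels of $\w$-continuous linear functionals; for the marginals I would realize $m\mapsto\langle m,\psi\otimes1\rangle$ as $m\mapsto\langle m,\psi\otimes\chi\rangle$ with $\chi\in\C_0(\R)$ equal to $1$ on $[0,1]$, which coincides with the genuine marginal because $m$ is supported in $\K$ and is $\w$-continuous. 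Hence $\N$ is $\w$-closed, in particular norm-closed and complete, and so is $\M_{\DD,0}(\K)$.

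Finally, for the duality I would first note that $[\DD]:\C_0(\R^2)\to\mathcal{Y}:=[\DD]\{\C_0(\R^2)\}$ is injective, since its distributional kernel $\phi_1\otimes1+1\otimes\phi_2$ meets $\C_0(\R^2)$ only at $0$, so $\mathcal{Y}\cong\C_0(\R^2)$ isometrically for the norm $\norm{[\DD]\{\phi\}}=\norm{\phi}_{\infty}$. Under the pairing $\langle f,[\DD]\{\phi\}\rangle:=\langle[\DD]\{f\},\phi\rangle=\int_\K\phi\,\mathrm{d}m$, the space $\M_{\DD,0}(\K)$ is identified isometrically with $\N\subset\M(\R^2)=\C_0(\R^2)^{\ast}$, and $M_0$ corresponds to the pre-annihilator ${}^{\perp}\N=\{\phi\in\C_0(\R^2):\int_\K\phi\,\mathrm{d}m=0\ \text{for all}\ m\in\N\}$. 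The standard isometry $(\C_0(\R^2)/{}^{\perp}\N)^{\ast}\cong({}^{\perp}\N)^{\perp}$ then exhibits the dual as the bi-annihilator of $\N$; the only point requiring care is to verify $({}^{\perp}\N)^{\perp}=\N$ rather than a strictly larger space, which is the bipolar theorem applied to the $\w$-closed $\N$ established above. Transporting everything back through the isometry $[\DD]$ gives $(\M_{\DD,0}(\K),\norm{[\DD]\{\cdot\}}_{\M})=(\mathcal{Y}/M_0)^{\ast}$, as claimed. Thus Parts~2 and~3 are soft consequences once the measure-theoretic estimate of Part~1 is in hand.
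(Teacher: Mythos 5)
Your proposal is correct, and for Items 2 and 3 it takes a genuinely more self-contained route than the paper. For Item 1 you and the paper prove the same key estimate $\norm{\nabla\{f\}}_{\M^2}\le 2\norm{[\DD]\{f\}}_{\M}$; the paper states it in one line as $\norm{(u\otimes 1)\ast m}_{\M}\le\vert\K\vert\norm{m}_{\M}$, whereas your derivation via the vanishing-marginal cancellation makes explicit why $(u\otimes\delta)\ast m$, which for generic $m\in\M(\K)$ has infinite mass along horizontal rays, is a finite measure here—a point the paper handles implicitly by working in $\M(\K)$. The real divergence is afterwards: the paper cites \cite{guillemet2025tensor} for the completeness and dual-space structure of the larger space $[\DD]^{-1}\{\M(\K)\}$ and then passes to $\M_{\DD,0}(\K)$ as a closed subspace, asserting that the predual is obtained by a further quotient; that step is exactly where $\w$-closedness (not mere norm-closedness) of the subspace is required, and the paper does not verify it. Your route—identifying $[\DD]\{\M_{\DD,0}(\K)\}$ with the subspace $\mathcal{N}\subset\M(\K)$ of measures supported in $\K$ with vanishing marginals, proving $\mathcal{N}$ is $\w$-closed in $\M(\R^2)=\C_0(\R^2)^{\ast}$ (the replacement of $1\otimes\psi$ by $\chi\otimes\psi$ with $\chi\in\C_0(\R)$, $\chi\equiv 1$ on $[0,1]$, is the right trick), and then invoking the quotient duality $\left(\C_0(\R^2)/{}^{\perp}\mathcal{N}\right)^{\ast}\cong\left({}^{\perp}\mathcal{N}\right)^{\perp}=\mathcal{N}$ via the bipolar theorem—supplies precisely this verification and yields both completeness and duality from classical functional analysis with no external citation. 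What the paper's approach buys is brevity, given the earlier reference; what yours buys is a closed argument plus an explicit description of the image of $\M_{\DD,0}(\K)$ under $[\DD]$, which is of independent use. Two small points to tighten: take the test functions in $\C_c(\R^2)$ rather than $\C_0(\R^2)$ in the gradient estimate, so that $\int_s^{\infty}\varphi(x_1,t)\,\mathrm{d}x_1$ is finite before you split it; and spell out the injectivity of $[\DD]$ on $\C_0(\R^2)$ (a null-space element $\phi_1\otimes 1+1\otimes\phi_2$ lying in $\C_0(\R^2)$ forces both $\phi_i$ constant and the sum zero), since it is needed both for the norm $\norm{[\DD]\{\phi\}}=\norm{\phi}_{\infty(\R^2)}$ to be well defined and for transporting ${}^{\perp}\mathcal{N}$ onto $M_0$.
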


\begin{proof}[\textbf{Proof of Theorem \ref{th:normequivalence}}]
\hfill\\
\textbf{Item 1.}
Let $m=[\DD]\{f\}$ and $f=(u\otimes u)\ast m$. Then,  
\begin{align}
    &\norm{[\mathrm{I}\otimes\Dd]\{f\}}_{\M}=\norm{(u\otimes1)\ast m}_{\M}\leq\vert\K\vert\norm{m}_{\M}\nonumber\\
    \Rightarrow&\norm{\nabla\{f\}}_{\M}\leq2\vert\K\vert\norm{[\DD]\{f\}}_{\M}
\end{align}
where $\vert\K\vert=\int_{\K}1\mathrm{d}\bb{x}.$ Consequently, \eqref{eq:normequivalence} holds with
\begin{align}
     C=\text{max}\left(\frac{\theta}{\theta'},\frac{1-\theta}{1-\theta'}\right),&\quad\text{for}\quad\theta'\in]0,1[\\
    C=(\theta+(1-\theta)\vert\K\vert2),&\quad\text{for}\quad\theta'=1.
\end{align}
\textbf{Item 2.} We know from \cite{guillemet2025tensor} that $[\DD]^{-1}\{\M(\K)\}$ is a Banach space for the norm $\norm{[\DD]\{\cdot\}}_{\M }$. Since $\M_{\DD,0}(\K)$ is a closed subspace of $[\DD]^{-1}(\M(\K))$, it is also Banach space for $\norm{[\DD]\{\cdot\}}_{\M }$. By the norm equivalence, it follows that it is a Banach space for $\norm{\cdot}_{\theta}$.
\hfill\\
\textbf{Item 3.} Here, all quotient spaces are equipped with their quotient norms. We know from \cite{guillemet2025tensor} that $[\DD]^{-1}\{\M(\K)\}$ equipped with $\norm{[\DD]\{\cdot\}}_{\M }$ is the dual space of $[\DD]\{\C_0(\R^2)\}/M$, with
\begin{multline}
    M=\{v\in[\DD]\{\C_0\}(\R^2):\\
    \forall f\in[\DD]^{-1}\{\M(\K)\},\langle f,v\rangle=0\}.
\end{multline}
Since $\M_{\DD,0}(\K)$ is a Banach subspace of $[\DD]^{-1}\{\M(\K)\}$, the predual of $\M_{\DD,0}(\K)$ is given by the (pre-)annihilator 
\begin{equation}
    \left([\DD]\{\C_0(\R^2)\}/M\right)/M_0=[\DD]\{\C_0(\R^2)\}/M_0.
\end{equation}
\end{proof}

The key takeaway of Theorem \ref{th:normequivalence} is that all $\theta$-norms, for $\theta>0$, are topologically equivalent on $\M_{\DD,0}(\K)$. We have shown that $\M_{\DD,0}(\K)$ also admits a $\w$ topology, such that $(f_n)_{n=1}^{\infty}$ is $\w$-convergent to $f$ in $\M_{\DD,0}(\K)$ if and only if
\begin{equation}
    \forall [v]\in[\DD]\{\C_0(\R^2)\}/M_0:\quad\lim_{n\to\infty}\langle f_n,[v]\rangle=\langle f,[v]\rangle.
\end{equation}
Observe that, even though the quotient seems to complicate the predual structure, one still has that $(f_n)_{n=1}^{\infty}$ is $\w$-convergent to $f$ if and only if
\begin{equation}
    \forall v\in[\DD]\{\C_0(\R^2)\}:\quad\lim_{n\to\infty}\langle f_n,v\rangle=\langle f,v\rangle,
\end{equation}
because changing the representative in the quotient class, by definition, does not change the value of the brackets.

Next, we discuss the relation of $\M_{\DD,0}(\K)$ with the space $\BV(\Omega)$ of functions of bounded variation, which is naturally equipped with the norm $\norm{\nabla\{\cdot\}}_{\M^2}$. We recall that, for a domain $\Omega$,
\begin{equation}
    \BV(\Omega)=\{f\in\mathcal{L}_1(\Omega):\nabla\{f\}\in\M(\Omega)^2\}.
\end{equation}
\begin{Proposition} 
\label{prop:BV}
For a domain $\Omega$ such that $\K\subset\Omega$, the following holds:
\begin{itemize}
    \item [1.] the functional $\norm{\nabla\{\cdot\}}_{\M^2}$ is a norm on $\M_{\DD,0}(\K)$;
    \item [2.] the space $\M_{\DD,0}(\K)$ is a subspace of $\BV(\Omega)$, i.e.
    \begin{equation}
        \M_{\DD,0}(\K)\subset\BV(\Omega);
    \end{equation}
    \item [3.] the vector space $\big(\M_{\DD,0}(\K),\norm{\nabla\{\cdot\}}_{\M^2}\big)$ is not closed.
\end{itemize}
\end{Proposition}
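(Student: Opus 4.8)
I would first observe that $\norm{\nabla\{\cdot\}}_{\M^2}$ is a seminorm on $\M_{\DD,0}(\K)$: absolute homogeneity and the triangle inequality are inherited componentwise from the total-variation norm $\norm{\cdot}_{\M}$, and both components $[\I\otimes\Dd]\{f\}$ and $[\Dd\otimes\I]\{f\}$ are finite measures by the estimate established in the proof of Item~1 of Theorem~\ref{th:normequivalence}. The only nontrivial point is definiteness. If $\norm{\nabla\{f\}}_{\M^2}=0$, then $\nabla\{f\}=0$ as a distribution, so $f$ is constant on the connected set $\R^2$; since every $f\in\M_{\DD,0}(\K)$ vanishes on the nonempty open set $\K^c$, that constant must be $0$, hence $f=0$. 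This yields definiteness and proves Item~1.

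\textbf{Item 2.} Fix $f\in\M_{\DD,0}(\K)$ and write $m=[\DD]\{f\}\in\M(\K)$ with $f=(u\otimes u)\ast m$, as in Proposition~\ref{prop:carac}. I would verify the two defining conditions of $\BV(\Omega)$ separately. For integrability, since $u\otimes u$ is the indicator $\mathbbm{1}_{[0,\infty[^2}$, the representation $f(\bb{x})=\int_{\K}u(x_1-y_1)\,u(x_2-y_2)\,\mathrm{d}m(\bb{y})$ gives the pointwise bound $|f|\leq\norm{m}_{\M}$, so $f\in\mathcal{L}_\infty(\K)$; as $f$ vanishes outside the bounded set $\K$, this gives $f\in\mathcal{L}_1(\Omega)$. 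For the gradient, the same estimate as in Theorem~\ref{th:normequivalence} yields $\norm{\nabla\{f\}}_{\M^2}\leq 2|\K|\,\norm{m}_{\M}<\infty$, and because $f$ vanishes on the open set $\K^c$ its distributional gradient is supported in $\K\subset\Omega$. Hence $\nabla\{f\}\in\M(\Omega)^2$, and the two facts combine to give $f\in\BV(\Omega)$.

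\textbf{Item 3.} The goal is to produce a sequence in $\M_{\DD,0}(\K)$ that is Cauchy for $\norm{\nabla\{\cdot\}}_{\M^2}$ but whose limit leaves the space because its mixed derivative is not a bounded measure. I would build the limit from a diagonal ridge: choose a one-variable profile $F$ that is absolutely continuous with $F'\in\mathcal{L}_1$ but of unbounded pointwise variation, so that $F''$ is not a finite measure (for instance $F'(s)=\sin(1/s)$ near $s=0$, suitably truncated), and set $f(\bb{x})=c(x_1)\,c(x_2)\,F(x_1-x_2)$ for a smooth cutoff $c$ compactly supported in $]0,1[$. Then $f$ is supported inside $\K$ with $\nabla\{f\}\in\mathcal{L}_1\subset\M(\Omega)^2$, so $f\in\BV(\Omega)$, whereas the surviving $F''(x_1-x_2)$ contribution forces $[\DD]\{f\}\notin\M(\K)$, i.e.\ $f\notin\M_{\DD,0}(\K)$. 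Mollifying, $f_n=f\ast\eta_n$ is smooth and, for $n$ large, supported in $\K$, hence $f_n\in\M_{\DD,0}(\K)$; since $\nabla\{f_n\}=(\nabla\{f\})\ast\eta_n\to\nabla\{f\}$ in $\mathcal{L}_1$, the sequence $(f_n)$ is $\norm{\nabla\{\cdot\}}_{\M^2}$-Cauchy. Were it to converge in this norm to some $g\in\M_{\DD,0}(\K)$, a Poincaré-type inequality for functions vanishing outside $\K$ (controlling $\norm{\cdot}_{\mathcal{L}_1}$ by $\norm{\nabla\{\cdot\}}_{\M^2}$, and thereby identifying this norm with the $\BV$ norm on the space) would force $g=f$, contradicting $f\notin\M_{\DD,0}(\K)$; thus the space is not closed.

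The delicate step is Item~3. A naive staircase approximation of a diagonal edge does \emph{not} converge in the total-variation gradient norm, because the approximating gradient measures are mutually singular with the limiting one and the norm of their difference stays bounded away from zero. This is precisely why I pass to a profile with absolutely continuous gradient, so that mollification gives genuine $\mathcal{L}_1$-convergence of gradients — hence $\norm{\nabla\{\cdot\}}_{\M^2}$-Cauchyness — while the mixed derivative still blows up in mass.
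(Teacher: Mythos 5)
Your Items 1 and 2 follow the paper's proof essentially verbatim (constancy of distributions with vanishing gradient plus compact support for definiteness; the estimate from Theorem~\ref{th:normequivalence} plus support considerations for the embedding), so there is nothing to flag there. Item 3, however, takes a genuinely different route. The paper's counterexample is piecewise constant and fully explicit: it sums indicators of dyadically shrinking, axis-aligned squares stacked into a staircase accumulating at a point, so that the partial sums are Cauchy in $\norm{\nabla\{\cdot\}}_{\M^2}$ (the tail perimeters are geometrically summable) while the limit's mixed derivative is an infinite sum of unit Dirac masses at the accumulating corners, hence of infinite mass. Your construction instead exhibits a Lipschitz diagonal ridge $c(x_1)c(x_2)F(x_1-x_2)$ with $F'$ bounded but of unbounded variation, approximated by mollification. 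Both are correct, and your version even shows something the paper's does not: the defect can occur for $W^{1,\infty}$ functions, and the obstruction is directional misalignment (a diagonal feature) rather than an accumulation of jumps. The price is machinery and two steps you leave as sketches: (i) the claim $[\DD]\{f\}\notin\M(\K)$ is the crux and needs an argument — expand $\partial_1\partial_2 f$ by Leibniz, note the three terms involving $F$ and $F'$ are bounded measures, then rotate coordinates and test against products $\psi(s)\chi(t)$ to conclude that if the remaining term $c(x_1)c(x_2)F''(x_1-x_2)$ were a bounded measure then $F''$ would be a bounded measure near the origin, a contradiction; (ii) the Poincaré step is standard but also avoidable, since your $f_n$ converge to $f$ in the gradient norm directly (not merely Cauchy), and by your own Item 1 argument that norm separates points on BV functions vanishing outside $\K$, so the limit is unique and lies outside $\M_{\DD,0}(\K)$. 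Finally, your cautionary remark about staircase approximations of diagonal edges is true but does not bear on the paper's proof: the paper's staircase is not an approximation of a diagonal edge, it is itself the (axis-aligned) limit object, and its partial sums do converge in the gradient norm.
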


\begin{proof}[\textbf{Proof of Proposition \ref{prop:BV}}]
\hfill\\
\textbf{Item 1.} Absolute homogeneity, the triangle inequality, and non-negativity are straightforward. In addition, 
\begin{equation}
    \norm{\nabla\{f\}}_{\M^2}=0 \Leftrightarrow f=c
\end{equation}
for some $c \in \mathbb{R}$. Since $f$ is compactly supported, one has that $c=0$.
\hfill\\
\textbf{Item 2.} 
It follows from Item 1 of Theorem \ref{th:normequivalence} that $\nabla\{f\} \in \M(\mathbb{R}^2)^2$. Because $\mathrm{supp}(\nabla\{f\}) \subset \K \subset \Omega$, we find that $\nabla\{f\} \in \M(\Omega)^2$. The same argument applies to conclude that $f \in \mathcal{L}_1(\Omega)$.
\hfill\\
\textbf{Item 3.}
Let $\K=[0,2]^2$ and $\beta(\cdot) = \mathbbm{1}_{[0,1]}(\cdot)$. We define
\begin{equation}
    f_{n+1}(\cdot) = \sum_{k=0}^{n} \beta\left(\frac{\cdot}{2^{-k}}\right) \otimes \beta\left(\frac{\cdot - c_k}{2^{-k}}\right)
\end{equation}
with $c_k = 2 \left(1 - \frac{1}{2^{k}} \right)$. Then, a straightforward calculation shows that the sequence $(f_n)_{n=1}^\infty$ with $f_n \in \M_{\DD,0}(\K)$ converges in $\norm{\nabla\{\cdot\}}_{\M^2}$ to $f^\infty$, which is a staircase with an accumulation of steps towards the $x_2$ axis. In addition, we calculate that 
\begin{multline}
    [\DD]\{f^\infty\} = \\
    \delta_{(0,0)} - \delta_{(1,0)} + \sum_{k=1}^{\infty} \delta_{(2 - c_k, c_k)} - \delta_{(2 - c_{k+1}, c_k)},
\end{multline}
which is an unbounded measure with $\norm{[\DD]\{f^\infty\}}_{\M} = \infty$. Consequently, $\big(\M_{\DD,0}(\K), \norm{\nabla\{\cdot\}}_{\M^2}\big)$ is not closed.
\end{proof}

\subsection{Sets, Indecomposable Sets and Approximations}
We study the geometrical properties of sets that satisfy $\norm{[\DD]\{\mathbbm{1}_A\}}_{\M} < \infty$. As a first step, we show that an indicator function $\mathbbm{1}_A$, as an element of $\M_{\DD,0}(\K)$, forms an equivalence class.

\begin{Proposition}
\label{prop:setdiff}
    Let $A \subset \K$ be a measurable set. If a measurable set $A' \subset \K$ differs from $A$ by a set of measure zero, then 
    \begin{equation}
        \forall \theta \in ]0,1], \quad \norm{\mathbbm{1}_A - \mathbbm{1}_{A'}}_{\theta} = 0.
    \end{equation}
    In particular, if $\mathbbm{1}_A \in \M_{\DD,0}(\K)$, then $\mathbbm{1}_{A'} \in \M_{\DD,0}(\K)$.
\end{Proposition}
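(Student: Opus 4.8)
The plan is to reduce the statement to a single observation: the whole construction of $\M_{\DD,0}(\K)$ and of the norm $\norm{\cdot}_{\theta}$ is purely distributional, so it cannot distinguish two functions that coincide almost everywhere. First I would argue that $\mathbbm{1}_A$ and $\mathbbm{1}_{A'}$ represent the \emph{same} element of $\D'(\R^2)$, and then I would let the definitions propagate this identity to the derivatives and to the norm. Because nothing in the setup ever inspects pointwise values, the equality of distributions is all that is needed.

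For the first and main step, I would use that $A'\subset\K$ differs from $A$ by a Lebesgue-null set, so the bounded measurable functions $\mathbbm{1}_A$ and $\mathbbm{1}_{A'}$ agree Lebesgue-almost everywhere. Both are locally integrable and hence act on test functions by integration, so that $\langle \mathbbm{1}_A - \mathbbm{1}_{A'}, \phi \rangle = \int_{\R^2} (\mathbbm{1}_A - \mathbbm{1}_{A'})\,\phi \, \mathrm{d}\bb{x} = 0$ for every $\phi \in \C_c^{\infty}(\R^2)$, the integrand vanishing a.e.; hence $\mathbbm{1}_A - \mathbbm{1}_{A'} = 0$ in $\D'(\R^2)$. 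Since the operators $\DD$, $\I\otimes\Dd$ and $\Dd\otimes\I$ are linear and send the zero distribution to the zero measure, it follows that $[\DD]\{\mathbbm{1}_A - \mathbbm{1}_{A'}\}=0$ and $\nabla\{\mathbbm{1}_A - \mathbbm{1}_{A'}\}=0$, whence $\norm{\mathbbm{1}_A - \mathbbm{1}_{A'}}_{\theta}=0$ for every $\theta\in]0,1]$.

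For the membership claim, I would observe that belonging to $\M_{\DD,0}(\K)$ as defined in \eqref{eq:2.A.8} depends only on the distributional class of $f$, through the two conditions $[\DD]\{f\}\in\M(\K)$ and $\langle f,\phi\rangle=0$ for all $\phi\in\C_c^{\infty}(\K^c)$. As $\mathbbm{1}_{A'}=\mathbbm{1}_A$ in $\D'(\R^2)$, we get $[\DD]\{\mathbbm{1}_{A'}\}=[\DD]\{\mathbbm{1}_A\}\in\M(\K)$ whenever $\mathbbm{1}_A\in\M_{\DD,0}(\K)$, and the vanishing condition transfers verbatim, giving $\mathbbm{1}_{A'}\in\M_{\DD,0}(\K)$. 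I do not expect a genuine analytic obstacle here: the only point deserving care is the explicit passage from the measure-theoretic hypothesis to the distributional identity of the second paragraph, after which everything is automatic because neither $\M_{\DD,0}(\K)$ nor $\norm{\cdot}_{\theta}$ ever refers to pointwise values. The one bookkeeping subtlety is that $\mathbbm{1}_A-\mathbbm{1}_{A'}$ is seen to lie in $\M_{\DD,0}(\K)$ only once it is identified with $0$, which is trivially in the space, so the norm appearing in the statement is well defined.
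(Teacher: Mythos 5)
Your proof is correct, and it takes a genuinely different, more elementary route than the paper's. You collapse the problem at the level of $\D'(\R^2)$: since $\mathbbm{1}_A$ and $\mathbbm{1}_{A'}$ are locally integrable and agree almost everywhere, they are the \emph{same} distribution, so linearity of $\DD$, $\I\otimes\Dd$, and $\Dd\otimes\I$ forces every term of $\norm{\cdot}_{\theta}$ to vanish, and membership in $\M_{\DD,0}(\K)$ transfers verbatim because the defining conditions in \eqref{eq:2.A.8} see only the distributional class. The paper instead routes the argument through its duality framework: it checks that $\langle\mathbbm{1}_A-\mathbbm{1}_{A'},[\DD]\{\phi\}\rangle=0$ for all $\phi\in\C_c^{\infty}(\R^2)$, extends by density to the predual $[\DD]\{\C_0(\R^2)\}$, deduces $\norm{\mathbbm{1}_A-\mathbbm{1}_{A'}}_{1}=0$ from the dual-space identification (Item 3 of Theorem \ref{th:normequivalence}), and finally invokes norm equivalence (Item 1) to cover all $\theta\in]0,1]$. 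What your route buys is self-containedness and clarity: it does not depend on Theorem \ref{th:normequivalence} at all, needing only the standard fact that a.e.-equal locally integrable functions define equal distributions, and it makes explicit that the equivalence class discussed after the proposition is really a single element of $\D'(\R^2)$. What the paper's route buys is mainly a rehearsal of the predual machinery used later; for this particular statement the dual-norm detour is unnecessary, since once the difference is recognized as the zero distribution all norms vanish trivially. You also correctly flag the one bookkeeping point, namely that $\norm{\mathbbm{1}_A-\mathbbm{1}_{A'}}_{\theta}$ is well defined because the difference, being the zero distribution, lies in $\M_{\DD,0}(\K)$.
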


\begin{proof}[\textbf{Proof of Proposition \ref{prop:setdiff}}]
We calculate that  
\begin{align}
&\forall \phi \in \C_c^{\infty}(\R^2), \quad \langle \mathbbm{1}_A - \mathbbm{1}_{A'}, [\DD]\{\phi\} \rangle = 0 \label{eq:2.B.36}\\
\Rightarrow & \forall \phi \in \C_0(\R^2), \quad \langle \mathbbm{1}_A - \mathbbm{1}_{A'}, [\DD]\{\phi\} \rangle = 0 \label{eq:2.B.37}\\
\Rightarrow & \norm{\mathbbm{1}_A - \mathbbm{1}_{A'}}_1 = 0 \label{eq:2.B.38}\\
\Rightarrow & \norm{\mathbbm{1}_A - \mathbbm{1}_{A'}}_{\theta} = 0. \label{eq:2.B.39}
\end{align}
In \eqref{eq:2.B.36}, we used the assumption that $A'$ differs from $A$ by a set of measure zero. In \eqref{eq:2.B.37} we used the fact that $\C_c^{\infty}(\R^2)$ is dense in $\C_0(\R^2)$, and that the linear and continuous functional $\langle \mathbbm{1}_A - \mathbbm{1}_{A'}, \cdot \rangle$ admits a unique (linear and continuous) extension to $\C_0(\R^2)$. In \eqref{eq:2.B.38} we used Item 3 of Theorem \ref{th:normequivalence}. In \eqref{eq:2.B.39} we used Item 1 of Theorem \ref{th:normequivalence}. Finally,
\begin{equation}
    \mathbbm{1}_{A'} = \mathbbm{1}_A - (\mathbbm{1}_A - \mathbbm{1}_{A'}) \in \M_{\DD,0}(\K).
\end{equation}
\end{proof}

The fact that $\mathbbm{1}_A$ forms an equivalence class is often forgotten in the sequel as it is (often) clear from the context which representative to take. Proposition \ref{prop:finitecorners} shows that such indicators must have the form of a polygon with finitely many corners and edges parallel to the $x_1$ or the $x_2$ axis. 

\begin{Proposition}
\label{prop:finitecorners}
    If $\mathbbm{1}_A \in \M_{\DD,0}(\K)$, then $[\DD]\{\mathbbm{1}_A\}$ is a discrete measure with finitely many atoms. In addition, one has the representation
    \begin{equation}
    \label{eq:}
        \mathbbm{1}_A = \sum_{k=1}^K a_k u(\cdot - x_{1,k}) u(\cdot - x_{2,k}), \quad \text{with} \quad a_k \in \{-1,1\}
    \end{equation}
and $(x_{1,k}, x_{2,k}) \in \K.$
\end{Proposition}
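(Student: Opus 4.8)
The statement has two parts: that $[\DD]\{\mathbbm{1}_A\}$ is a discrete measure with finitely many atoms, and the corresponding primitive representation. I would treat the representation as an immediate consequence of the discreteness claim, so the heart of the argument is establishing that the measure $m = [\DD]\{\mathbbm{1}_A\}$, which is \emph{a priori} only known to lie in $\M(\K)$, is in fact a finite sum of weighted Dirac masses with weights in $\{-1,1\}$.

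\medskip
\textbf{Plan.} The strategy is to exploit the rigidity forced by the fact that $\mathbbm{1}_A$ is $\{0,1\}$-valued together with the structure of the integral transform $[\DD]^{-1}$. By Proposition~\ref{prop:carac}, one has $\mathbbm{1}_A = (u\otimes u)\ast m$, so reconstructing $\mathbbm{1}_A$ amounts to a double antiderivative of $m$ in the distributional sense. First I would fix a generic vertical line $\{x_1 = t\}$ and consider the one-dimensional slice $x_2 \mapsto \mathbbm{1}_A(t,x_2)$; since $\mathbbm{1}_A$ takes only the values $0$ and $1$, this slice is itself an indicator of a one-dimensional set. The key observation is that $[\Dd\otimes\I]\{\mathbbm{1}_A\}$ and $[\I\otimes\Dd]\{\mathbbm{1}_A\}$ are the primitives (in one variable) of $m$, and the boundedness $\norm{m}_{\M} < \infty$ constrains the total variation of these partial primitives. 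I would then argue that, because the values are quantized to $\{0,1\}$, the partial derivatives $[\I\otimes\Dd]\{\mathbbm{1}_A\}$ can only charge jump sets where the slice switches between $0$ and $1$; iterating the derivative in the second variable shows that $m$ must be supported on a discrete set of points where \emph{both} the horizontal and vertical ``switches'' coincide, i.e.\ at the corners of $A$.

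\medskip
\textbf{Quantization and finiteness.} To pin down the weights, note that $m = [\Dd\otimes\I][\I\otimes\Dd]\{\mathbbm{1}_A\}$, so an atom of $m$ at a point $(x_1,x_2)$ records a simultaneous jump of the indicator in both coordinate directions; because the indicator is $\{0,1\}$-valued, each such jump of the partial primitive has unit size, forcing each atom of $m$ to carry mass $\pm 1$. Finiteness of the number of atoms then follows from $\norm{m}_{\M} < \infty$: since every atom contributes mass at least $1$ to the total variation, there can be only finitely many. Once $m = \sum_{k=1}^K a_k\,\delta_{(x_{1,k},x_{2,k})}$ with $a_k \in \{-1,1\}$ and $(x_{1,k},x_{2,k})\in\K$ is established, applying $[\DD]^{-1}$ and using $[\DD]^{-1}\{\delta_{(x_1,x_2)}\} = u(\cdot - x_1)\,u(\cdot - x_2)$ yields the claimed representation directly from Proposition~\ref{prop:carac}.

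\medskip
\textbf{Main obstacle.} The delicate point is justifying that $m$ cannot have a diffuse (non-atomic) part. A measure in $\M(\K)$ could \emph{a priori} have an absolutely continuous or singular-continuous component, and the mere finiteness of $\norm{m}_{\M}$ does not by itself exclude this. The crux is to convert the $\{0,1\}$-valued constraint on $\mathbbm{1}_A$ into a genuine quantization of the partial primitives: I expect to need a careful Fubini-type slicing argument showing that, for a.e.\ fixed $x_1$, the function $x_2\mapsto[\Dd\otimes\I]\{\mathbbm{1}_A\}(x_1,x_2)$ is \emph{integer-valued} (as a difference of indicator slices), so that its distributional derivative in $x_2$ is a sum of integer-weighted Diracs rather than a diffuse measure. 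Establishing this integer-valuedness rigorously — and controlling it uniformly enough in $x_1$ to conclude discreteness of the full two-dimensional measure $m$ — is where the real work lies; the remaining counting and reconstruction steps are then routine.
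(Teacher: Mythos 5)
Your proposal correctly identifies the crux — ruling out a diffuse (non-atomic) part of $m=[\DD]\{\mathbbm{1}_A\}$ — but it does not actually overcome it: you explicitly defer the integer-valuedness of the sliced primitives and its upgrade to discreteness of the two-dimensional measure to ``where the real work lies.'' That deferred step is the entire content of the proposition, so as it stands the argument has a genuine gap. Moreover, the slicing route you sketch has an intrinsic difficulty: $[\Dd\otimes\I]\{\mathbbm{1}_A\}$ is a measure, not a function, so the object ``$x_2\mapsto[\Dd\otimes\I]\{\mathbbm{1}_A\}(x_1,x_2)$'' needs a disintegration argument, and passing from a.e.-slice information to the absence of a singular-continuous component of $m$ is exactly the kind of Fubini-type control that is hard to make rigorous. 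The paper avoids slicing entirely. It uses the identity $\mathbbm{1}_A(t_1,t_2)=m([0,t_1]\times[0,t_2])$ from Proposition~\ref{prop:carac}: by inclusion--exclusion, the $m$-measure of any half-open rectangle $]e_1^-,e_1^+]\times]e_2^-,e_2^+]$ is an integer combination of four values of $\mathbbm{1}_A$, each in $\{0,1\}$, hence an integer. Integer-valuedness then extends from this generating class to all Borel sets by a monotone-class argument \cite[Proposition 1.1.5]{cohn2013measure}, and a bounded measure that is integer-valued on every Borel set is automatically a finite sum of Dirac masses with nonzero integer weights \cite[Chapters 1-2]{kallenberg2017random}. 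This kills any diffuse component in one stroke, which is precisely the step your plan leaves open.

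A secondary error: your quantization claim that ``each atom of $m$ must carry mass $\pm 1$'' is false. Take $A=[0,\tfrac12]^2\cup[\tfrac12,1]^2$, two squares meeting at a corner; then $[\DD]\{\mathbbm{1}_A\}$ has an atom of weight $2$ at $(\tfrac12,\tfrac12)$. The correct intermediate statement is that the weights are nonzero integers (which still gives finiteness from $\norm{m}_{\M}<\infty$, since each atom contributes at least $1$ to the total variation); the representation with $a_k\in\{-1,1\}$ is recovered afterwards by splitting integer atoms into repeated unit terms, which is how the paper's final claim should be read.
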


\begin{proof}[\textbf{Proof of Proposition \ref{prop:finitecorners}}]
\hfill\\\textbf{Step 1.}
We define
\begin{align}
    \mathbbm{1}_A(\bb{t})&=\int_{\K}u(t_1-x_1)u(t_2-x_2)\mathrm{d}m(x_1,x_2)\nonumber\\
    &=m([0,t_1]\times[0,t_2])\label{eq:2.B.59}.
\end{align}
Observe that in \eqref{eq:2.B.59} the value of $\mathbbm{1}_A(\bb{t})$ is set by measuring the closed square. Other choices, such as $m([0,t_1[\times[0,t_2[)$, would yield functions that fall in the same equivalence class as $\mathbbm{1}_A.$
\hfill\\\textbf{Step 2.}
We assume by contradiction that there exists a set $E$ of the form
\begin{align}
    \label{eq:2.B.43}
    E=]e_{1}^-,e_1^+]\times]e_{2}^-,e_2^+]\cap\K,\quad\text{with}\quad(e_1^+,e_2^+)\in\K,
\end{align} 
such that $m(E)\notin\mathbb{Z}.$ Observe that, for  $E_{0,0}=[0,e_1^-]\times[0,e_2^-]\cap\K,$ $E_{1,0}=]e_1^-,e_1^+]\times[0,e_2^-]\cap\K,$ and $E_{0,1}=[0,e_1^-]\times]e_{2}^-,e_2^+]\cap\K$, one has that 
\begin{align}
    m(E+E_{0,0}+E_{1,0}+E_{0,1})=\mathbbm{1}_A(e_1^+,e_2^+)\in\{0,1\}.
\end{align}
Consequently, we either have that 
\begin{itemize}
    \item $m(E_{0,0})\notin\mathbb{Z},$ in which case $m(E_{0,0})=\mathbbm{1}_A(e_1^-,e_2^-)\in\{0,1\}$, which is a contradiction;
    \item  $m(E_{0,1})\notin\mathbb{Z},$ in which case 
    \begin{itemize}
        \item if $m(E_{0,0})\notin\mathbb{Z}$, then we find a contradiction as before;
        \item if $m(E_{0,0})\in\mathbb{Z}$, then $\mathbb{Z}\notni m(E_{0,1}+E_{0,0})=\mathbbm{1}_A(e_1^-,e_2^+)\in\{0,1\}$
        which is a contradiction;
    \end{itemize}
    \item $m(E_{1,0})\notin\mathbb{Z},$ in which case we reach a contradiction as in the $m(E_{0,1})\notin\mathbb{Z}$ case.
\end{itemize}
 Further, one can prove that the measure $m$
\begin{itemize}
    \item [1.] is integer valued on intersections of sets of the form \eqref{eq:2.B.43};
    \item [2.] is integer valued on unions of sets of the form \eqref{eq:2.B.43};
    \item [3.] is integer valued on the relative complement of a set of the form \eqref{eq:2.B.43}.
\end{itemize}
It follows that $m$ is integer-valued
on any Borel set \cite[Proposition 1.1.5]{cohn2013measure}. This implies, with the boundedness of $m$, that \cite[Chapters 1-2]{kallenberg2017random}
\begin{equation}
    m=\sum_{k=1}^K a_k \delta_{(x_{1,k}, x_{2,k})}, \quad a_k \in \mathbb{Z} \setminus \{0\}.
\end{equation}
The fact that $\mathbbm{1}_A$ is $\{0,1\}$-valued finally implies that $a_k \in \{-1,1\}$. 
\end{proof}

One important class is formed by indecomposable sets \cite{bredies2020sparsity,ambrosio2001connected}, which we relabel as $\nabla$-indecomposable in Definition \ref{def:decomposable}. These sets are central in our description of the unit ball of $(\M_{\DD,0}^+(\K),\norm{\cdot}_{\theta})$, as the indecomposability property translates into an extreme point structure. We extend this class to $[\DD]$-indecomposable sets in Definition \ref{def:decomposable2}.

\begin{Definition}
\label{def:decomposable}Let $A\subset\K$ be such that $\mathbbm{1}_A\in\M_{\DD,0}(\K)$. The set $A$ is $\nabla$-\emph{decomposable} if there exists a partition $A_1,A_2$ of $A$ such that, $\mathbbm{1}_{A_1} \in \M_{\DD,0}(\K)$, $\mathbbm{1}_{A_2} \in \M_{\DD,0}(\K)$,  
\begin{equation}
    \norm{\nabla\{\mathbbm{1}_{A_1}\}}_{\M^2}+\norm{\nabla\{\mathbbm{1}_{A_2}\}}_{\M^2}=\norm{\nabla\{\mathbbm{1}_{A}\}}_{\M^2}<\infty.
\end{equation}
and $\vert A_1\vert>0,$ $\vert A_2\vert>0.$ If the set $A$ is not $\nabla$-decomposable, it is $\nabla$-\emph{indecomposable}. The set of all $\nabla$-indecomposable sets is denoted by 
\begin{equation}
    \mathcal{E}_{\nabla}.
\end{equation}
\end{Definition}

\begin{Definition}
\label{def:decomposable2}
  Let $A \subset \K$ be such that $\mathbbm{1}_A \in \M_{\DD,0}(\K)$. The set $A$ is $[\DD]$-\emph{decomposable} if there exists a partition $A_1, A_2$ of $A$ such that, $\mathbbm{1}_{A_1} \in \M_{\DD,0}(\K)$, $\mathbbm{1}_{A_2} \in \M_{\DD,0}(\K)$,  
\begin{multline}
\label{eq:2.B.32}
     \norm{[\DD]\{\mathbbm{1}_{A_1}\}}_{\M} + \norm{[\DD]\{\mathbbm{1}_{A_2}\}}_{\M} = \\
        \norm{[\DD]\{\mathbbm{1}_{A}\}}_{\M} < \infty,
\end{multline}
and $\vert A_1\vert>0,$ $\vert A_2\vert>0.$ If the set $A$ is not $[\DD]$-decomposable, it is $[\DD]$-\emph{indecomposable}. The set of all $[\DD]$-indecomposable sets is denoted by 
\begin{equation}
    \mathcal{E}_{\DD}.
\end{equation}
\end{Definition}

The relation between $[\DD]$-indecomposability and $\nabla$-indecomposability is revealed in Proposition \ref{prop:sets}. 

\begin{Proposition}
\label{prop:sets}
The strict inclusion holds true
\begin{equation}
    \mathcal{E}_{\DD}\subset\mathcal{E}_{\nabla}.
\end{equation}
\end{Proposition}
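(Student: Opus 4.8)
The plan is to prove the inclusion and its strictness separately. For $\mathcal{E}_{\DD}\subset\mathcal{E}_{\nabla}$ I would argue the contrapositive, namely that every $\nabla$-decomposable set is $[\DD]$-decomposable, and in fact that the \emph{same} partition witnesses both. So let $A=A_1\sqcup A_2$ be a $\nabla$-decomposition. The requirements $\mathbbm{1}_{A_i}\in\M_{\DD,0}(\K)$ and $|A_i|>0$ are already part of the hypothesis, and the finiteness of $\norm{[\DD]\{\mathbbm{1}_{A_i}\}}_{\M}$ follows from Proposition~\ref{prop:finitecorners}, which guarantees that each $\mathbbm{1}_{A_i}$ is an axis-aligned polygon with finitely many corners. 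It then remains to establish the additivity in \eqref{eq:2.B.32}. By linearity $[\DD]\{\mathbbm{1}_A\}=[\DD]\{\mathbbm{1}_{A_1}\}+[\DD]\{\mathbbm{1}_{A_2}\}$, and all three are discrete measures supported on corner points with weights in $\{-1,+1\}$. Writing $c_i(p)$ for the weight of $[\DD]\{\mathbbm{1}_{A_i}\}$ at a point $p$, additivity of the total-variation norm is equivalent to $|c_1(p)+c_2(p)|=|c_1(p)|+|c_2(p)|$ for every $p$, i.e. to the absence of any cancellation between atoms of opposite sign.

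The geometric heart of the argument is to rule out such cancellation, and I expect this to be the main obstacle, since it is where the two a priori unrelated functionals — edge length and corner count — must be reconciled. First I would recall that perimeter additivity forces the common essential boundary $\partial^*A_1\cap\partial^*A_2$ to be $\mathcal{H}^1$-negligible: a common interface of positive length is counted in both $\norm{\nabla\{\mathbbm{1}_{A_1}\}}_{\M^2}$ and $\norm{\nabla\{\mathbbm{1}_{A_2}\}}_{\M^2}$ but not in $\norm{\nabla\{\mathbbm{1}_A\}}_{\M^2}$, contradicting the defining equality of a $\nabla$-decomposition. Hence $A_1$ and $A_2$ can meet only at finitely many points. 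At such a meeting point $p$ I would analyse the four open quadrants around $p$: each belongs to $A_1$, to $A_2$, or to neither, never to both. If a quadrant of $A_1$ is adjacent, across one of the four axis-rays at $p$, to a quadrant of $A_2$, that ray lies in $\partial^*A_1\cap\partial^*A_2$ and contributes positive length, which is excluded. Thus the quadrants of $A_1$ and those of $A_2$ are non-adjacent for the cyclic adjacency of the four quadrants, and a short enumeration shows that the only configurations in which \emph{both} $c_1(p)$ and $c_2(p)$ are nonzero are the two diagonal ones, with $\{A_1,A_2\}$ occupying either the south-west/north-east pair or the north-west/south-east pair. Using the sign rule $[\DD]\{u(\cdot-p_1)\,u(\cdot-p_2)\}=\delta_p$ — so that the south-west and north-east quadrants carry $+1$ while the north-west and south-east quadrants carry $-1$ — both diagonal configurations yield $c_1(p)c_2(p)>0$. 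This is exactly the no-cancellation condition, so \eqref{eq:2.B.32} holds and $A$ is $[\DD]$-decomposable.

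For the strict inclusion I would exhibit a single set in $\mathcal{E}_{\nabla}\setminus\mathcal{E}_{\DD}$. A convenient choice is a $T$-shaped polygon, for instance $A=A_1\sqcup A_2$ with $A_1=[0,1]\times[0,\tfrac13]$ a horizontal bar and $A_2=[\tfrac13,\tfrac23]\times[\tfrac13,\tfrac23]$ a shorter vertical bar attached to the \emph{interior} of the top edge of $A_1$, so that the two endpoints of the shared edge become genuine reflex corners of $A$. Since $A$ has connected interior, it is a connected polygon and therefore $\nabla$-indecomposable by the theory of indecomposable sets \cite{ambrosio2001connected}, i.e. $A\in\mathcal{E}_{\nabla}$; note that this partition has strictly superadditive perimeter, the shared edge being double-counted, so it is indeed not a $\nabla$-decomposition. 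On the other hand, the four corners of $A_1$ and the four corners of $A_2$ are eight distinct points, whence $[\DD]\{\mathbbm{1}_{A_1}\}$ and $[\DD]\{\mathbbm{1}_{A_2}\}$ have disjoint supports; no cancellation can occur and $\norm{[\DD]\{\mathbbm{1}_A\}}_{\M}=8=\norm{[\DD]\{\mathbbm{1}_{A_1}\}}_{\M}+\norm{[\DD]\{\mathbbm{1}_{A_2}\}}_{\M}$, so this partition is a genuine $[\DD]$-decomposition and $A\notin\mathcal{E}_{\DD}$. This establishes $\mathcal{E}_{\DD}\subsetneq\mathcal{E}_{\nabla}$.
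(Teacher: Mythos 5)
Your proof is correct and follows essentially the same route as the paper: the inclusion $\mathcal{E}_{\DD}\subset\mathcal{E}_{\nabla}$ is obtained by showing that any $\nabla$-decomposition is automatically a $[\DD]$-decomposition through a no-cancellation argument at shared corners (your quadrant enumeration, ruling out adjacent quadrants via perimeter additivity and checking signs on the diagonal configurations, is a more detailed rendering of the paper's terse claim that facing corners must carry the same sign), and strictness is established with the same kind of counterexample, a bar with a square attached to the interior of one of its edges, which is $\nabla$-indecomposable by \cite{ambrosio2001connected} yet splits into two rectangles with disjoint corner sets. No gaps.
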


 \begin{proof}[\textbf{Proof of Proposition \ref{prop:sets}}]
\hfill\\
\textbf{Item 1.} We provide an example of a $\nabla$-indecomposable set $A$ that is not $[\DD]$-indecomposable. To do so, we define 
\begin{align}
   & \mathbbm{1}_{A_1}(t_1,t_2)=\beta\left(\frac{t_1}{3}\right)\otimes\beta(t_2)\\
    & \mathbbm{1}_{A_2}(t_1,t_2)=\beta(t_1-1)\otimes\beta(t_2-1)
\end{align}
as well as $\mathbbm{1}_{A}=\mathbbm{1}_{A_1}+\mathbbm{1}_{A_2}$. On the one hand, $A$ is $[\DD]$-decomposable as, $\forall i\in\{1,2\}$,
\begin{equation}
    \norm{[\DD]\{\mathbbm{1}_{A_i}\}}_{\M}=4,\quad\norm{[\DD]\{\mathbbm{1}_{A}\}}_{\M}=8.
\end{equation}
On the other hand, we know from \cite[Proposition 2]{ambrosio2001connected} that $A$ is $\nabla$-indecomposable.

\hfill\\
\textbf{Item 2.} We show that, if $(A_1, A_2)$ is a $\nabla$-decomposition of $A$, then it is also a $[\DD]$-decomposition. We define 
$m = [\DD]\{\mathbbm{1}_A\}$, $m_1 = [\DD]\{\mathbbm{1}_{A_1}\}$, and $m_2 = [\DD]\{\mathbbm{1}_{A_2}\}$. It follows from Proposition \ref{prop:finitecorners} that $m$, $m_1$, and $m_2$ are discrete measures with finitely many atoms, such that
\begin{equation}
m_i = \sum_{k=1}^{K_i} a_{i,k} \delta_{b_{i,k}} \otimes \delta_{c_{i,k}} \quad \text{with} \quad a_{i,k} \in \{-1,1\}.
\end{equation}
Assume, by contradiction, that $(A_1, A_2)$ fails to be a $[\DD]$-decomposition. Then, there exists $(k,k')$ such that 
\begin{equation}
    (b_{1,k}, c_{1,k}) = (b_{2,k'}, c_{2,k'}), \quad \text{and} \quad a_{1,k} = -a_{2,k'}.
\end{equation}
This means that $(b_{1,k}, c_{1,k})$ (respectively $(b_{2,k'}, c_{2,k'})$) is a corner of $A_1$ (respectively $A_2$). Observe that, for $(A_1, A_2)$ to be a $\nabla$-decomposition of $A$, their edges may overlap only on corners, and not on segments. This implies that both corners are facing each other, in which case $a_{1,k}$ and $a_{2,k'}$ must have the same value. This is a contradiction.

\end{proof}

Indicator functions $\mathbbm{1}_A \in \M_{\DD,0}(\K)$ are not only useful for the indecomposable structure, but also for their approximation properties. We conclude this section with Lemma \ref{lemma:approximation}, which states that any function $f \in \M_{\DD,0}(\K)$ can be approximated, in an appropriate sense, by linear combinations of indicator functions. The proof is given in Appendix \ref{app:proofs}.

\begin{Lemma}
\label{lemma:approximation}
    For $f\in\M_{\DD,0}(\K)$, there exists a sequence $(f_n)_{n=1}^{\infty}$ of functions $f_n\in\M_{\DD,0}(\K)$, such that $[\DD]\{f_n\}$ is a discrete measure supported on $\frac{1}{n}[1\cdots n]\times\frac{1}{n}[1\cdots n],$ and such that
    \begin{itemize}
        \item [1.] it is $\w$-convergent to $f$ in $\M_{\DD,0}(\K)$;
        \item [2.] over the mixed derivative, $$\norm{[\DD]\{f\}}_{\M}\geq\norm{[\DD]\{f_n\}}_{\M}$$ and $$\norm{[\DD]\{f\}}_{\M}=\underset{n\to\infty}{\mathrm{lim}}\norm{[\DD]\{f_n\}}_{\M};$$
        \item [3.] it is convergent to $f$ in $\mathcal{L}_1(\K)$;
        \item [4.] over the gradient, $\norm{\nabla\{f\}}_{\M^2}=\underset{n\to\infty}{\mathrm{lim}}\norm{\nabla\{f_n\}}_{\M^2}$.
    \end{itemize}
\end{Lemma}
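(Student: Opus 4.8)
\textbf{Proof proposal for Lemma \ref{lemma:approximation}.}

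The plan is to construct the approximating sequence by discretizing the mixed-derivative measure $m = [\DD]\{f\}$ onto a uniform dyadic-type grid of spacing $1/n$, and then recovering each $f_n$ by applying the integral operator $[\DD]^{-1}$. Concretely, for each $n$ I would partition $\K$ into the grid cells $Q_{\bb{j}} = ](j_1-1)/n, j_1/n] \times ](j_2-1)/n, j_2/n]$ and define a discrete measure $m_n = \sum_{\bb{j}} m(Q_{\bb{j}})\, \delta_{(j_1/n, j_2/n)}$, placing the aggregated mass of each cell at its upper-right corner (the natural node for the operator $(u\otimes u)\ast(\cdot)$, which measures closed lower-left rectangles as in \eqref{eq:2.B.59}). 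Then I set $f_n = (u\otimes u)\ast m_n = [\DD]^{-1}\{m_n\}$. By Proposition~\ref{prop:carac} and the support of $m_n$, each $f_n$ lies in $\M_{\DD,0}(\K)$, and by construction $[\DD]\{f_n\} = m_n$ is discrete and supported on the prescribed grid.

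Once the construction is fixed, the four items follow from standard measure-theoretic arguments. For Item 2, the inequality $\norm{m_n}_{\M} \le \norm{m}_{\M}$ is immediate because aggregating mass over each cell can only cause cancellation: $\vert m_n\vert(\{(j_1/n, j_2/n)\}) = \vert m(Q_{\bb{j}})\vert \le \vert m\vert(Q_{\bb{j}})$, and summing over the cells gives the bound. The convergence $\norm{m_n}_{\M} \to \norm{m}_{\M}$ I would obtain by noting that $m_n$ is the pushforward of $m$ under the map sending each point to its cell's upper-right node, so $m_n \to m$ weakly-$\star$ as measures; combined with the uniform bound and lower semicontinuity of the total variation norm, this pins down the limit of the norms. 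For Item 1, weak-$\star$ convergence in $\M_{\DD,0}(\K)$ means testing against $v = [\DD]\{\phi\}$ with $\phi \in \C_0(\R^2)$, and $\langle f_n, [\DD]\{\phi\}\rangle = \langle m_n, \phi\rangle \to \langle m, \phi\rangle = \langle f, [\DD]\{\phi\}\rangle$ by the weak-$\star$ convergence of $m_n$ to $m$ and uniform continuity of $\phi$ on the compact $\K$. Item 3 follows from Item 1 together with the boundedness of the sequence and the compact support in $\K$: since $f_n = (u\otimes u)\ast m_n$ with uniformly bounded masses, an Arzelà–Ascoli or dominated-convergence argument upgrades the weak-$\star$ convergence to $\mathcal{L}_1(\K)$ convergence, using that $u\otimes u$ is bounded and the relevant integrals are controlled on the bounded domain.

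Item 4 is the delicate one and I expect it to be the main obstacle. The gradient norms $\norm{\nabla\{f_n\}}_{\M^2} = \norm{[\I\otimes\Dd]\{f_n\}}_{\M} + \norm{[\Dd\otimes\I]\{f_n\}}_{\M}$ involve the one-dimensional marginals of $m_n$, since $[\I\otimes\Dd]\{(u\otimes u)\ast m_n\} = (u\otimes 1)\ast m_n$ and its total variation is governed by the measure obtained after integrating out the first coordinate. The difficulty is that aggregating $m$ into $m_n$ can cause cancellations in the marginals that do not occur in the full two-dimensional measure, so one cannot simply transfer the Item~2 argument coordinatewise; I would need a separate lower-semicontinuity-plus-upper-bound argument showing that $\liminf_n \norm{\nabla\{f_n\}}_{\M^2} \ge \norm{\nabla\{f\}}_{\M^2}$ (from weak-$\star$ convergence of the derivative measures) while also bounding $\limsup_n \norm{\nabla\{f_n\}}_{\M^2} \le \norm{\nabla\{f\}}_{\M^2}$. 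The upper bound is the subtle direction: it should follow from the fact that the grid refinement is a coarsening that cannot increase the marginal total variations, which in turn rests on the observation that the projected measures $(u\otimes 1)\ast m_n$ are themselves the analogous discretizations of $(u\otimes 1)\ast m$. Making this precise, and ensuring the chosen placement of mass at cell corners is compatible with both the $\DD$-norm and the marginal $\Dd$-norms simultaneously, is where the bulk of the careful work lies.
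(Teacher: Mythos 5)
Your construction is exactly the paper's: aggregate the mass of each grid cell of $m=[\DD]\{f\}$ at the cell's upper-right node and apply $[\DD]^{-1}$; your arguments for Items 1 and 2 also match the paper's (uniform continuity of test functions, then the aggregation bound plus $\w$ lower semicontinuity). There are, however, two gaps. The minor one is Item 3: Arzel\`a--Ascoli is unavailable (the $f_n$ are discontinuous piecewise-constant functions), and $\w$-convergence plus a uniform bound does not by itself yield $\mathcal{L}_1(\K)$ convergence. The paper instead proves pointwise a.e.\ convergence directly: for $(x_1,x_2)$ in a given cell, $f(x_1,x_2)-f_n(x_1,x_2)$ is the $m$-measure of an L-shaped region contained in strips of width $1/n$, which tends to $0$ by continuity from above of the measure; dominated convergence with dominating function $\norm{m}_{\M}\mathbbm{1}_{\K}$ then concludes. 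Your sketch is missing this pointwise step.

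The substantive gap is Item 4, and the route you propose to close it rests on a claim that is false as stated. The marginal $[\I\otimes\Dd]\{f_n\}=(u\otimes\delta_0)\ast m_n$, sliced at position $t$ in the first variable, is the second-coordinate aggregation of $m([0,\mathrm{proj}_n(t)]\times\{\cdot\})$ with $\mathrm{proj}_n(t)=\lfloor nt\rfloor/n$ --- \emph{not} of $m([0,t]\times\{\cdot\})$. So the projected measures of $m_n$ are not ``the analogous discretizations'' of the projected measures of $m$, and coarsening \emph{can} strictly increase the marginal total variation for finite $n$: if $m$ carries mass of opposite signs inside the strip $]\mathrm{proj}_n(t),t[\,\times A$, truncating at $\mathrm{proj}_n(t)$ breaks a cancellation that is present at $t$. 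The paper's proof of Item 4 consists precisely in quantifying this mismatch: it establishes
\begin{equation}
\norm{[\I\otimes\Dd]\{f_n\}}_{\M}\;\le\;\norm{[\I\otimes\Dd]\{f\}}_{\M}+\int_0^1\norm{m\big(]\mathrm{proj}_n(t),t[\,\times\{\cdot\}\big)}_{\M(\R)}\,\mathrm{d}t,
\end{equation}
together with the symmetric bound for $[\Dd\otimes\I]$, and then shows that the error integrals vanish as $n\to\infty$ by continuity from above of $\vert m\vert$ (for each fixed $t$ the strips shrink to the empty set) combined with dominated convergence. Without this error estimate your $\limsup$ bound is unsupported; the $\liminf$ direction you give (testing the gradient measures against $\C_c^1$ vector fields and using lower semicontinuity) is the same as the paper's and is fine.
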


\subsection{Coarea and Cocorner Formulas}
In the sequel, we will use the (convex) cone $\M_{\DD,0}^+(\K)$ of positive functions, defined as 
\begin{equation}
\label{eq:2.A.1}
    \big\{f \in \M_{\DD,0}(\K) : \forall \phi \in \C_c^{\infty}(\R^2), \ \phi \geq 0 \Rightarrow \langle f, \phi \rangle \geq 0 \big\},
\end{equation}
which is closed in both the strong and the $\w$ topologies.

We develop tools to split a function $f \in \M_{\DD,0}^+(\R^2)$ into a sum of indicator functions on indecomposable sets. To this end, we first recall the coarea formula. For $f \in \M_{\DD,0}^+(\R^2)$ and $s \in [0, \infty[$, we define the functions
\begin{equation}
    P_-(f,s) = \norm{\nabla\{\min(f,s)\}}_{\M^2}
\end{equation}
and
\begin{equation}
    P_+(f,s) = \norm{\nabla\{\max(f - s, 0)\}}_{\M^2}.
\end{equation}

\begin{Theorem}
\label{th:coarea}
The functions $P_-(f,\cdot)$ and $P_+(f,\cdot)$ are continuous, respectively increasing and decreasing, and satisfy the equalities
\begin{equation}
\label{eq:2.C.51}
    P_-(f,s) = \int_{0}^s \norm{\nabla\{\mathbbm{1}_{\{x : f(x) \geq \alpha\}}\}}_{\M^2} \, \mathrm{d}\alpha,
\end{equation}
\begin{equation}
\label{eq:2.C.52}
    P_+(f,s) = \int_{s}^{\infty} \norm{\nabla\{\mathbbm{1}_{\{x : f(x) \geq \alpha\}}\}}_{\M^2} \, \mathrm{d}\alpha.
\end{equation}
In addition, they satisfy the $BV$-coarea formula 
\begin{multline}
\label{eq:2.C.53}
    \norm{\nabla\{f\}}_{\M^2} = P_-(f,s) + P_+(f,s) \\
    = \int_{0}^{\infty} \norm{\nabla\{\mathbbm{1}_{\{x : f(x) \geq \alpha\}}\}}_{\M^2} \, \mathrm{d}\alpha.
\end{multline}
\end{Theorem}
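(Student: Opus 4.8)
The plan is to reduce all three items to a single master coarea identity and then read off the stated consequences. Since $f \in \M_{\DD,0}^+(\K)$, Proposition \ref{prop:BV} guarantees that $f \in \BV(\Omega)$, so $f$ admits a precise representative whose distributional partial derivatives are bounded measures supported in $\K$. The master identity I would establish is that, for every nonnegative $g \in \M_{\DD,0}^+(\K)$,
\[
    \norm{\nabla\{g\}}_{\M^2} = \int_0^\infty \norm{\nabla\{\mathbbm{1}_{\{x : g(x) \geq \alpha\}}\}}_{\M^2}\,\mathrm{d}\alpha,
\]
where the integrand is well-defined for almost every $\alpha$ because a function of finite anisotropic total variation has level sets of finite anisotropic perimeter for almost every level, and these level sets are contained in $\K$ for $\alpha > 0$ since $g$ vanishes outside $\K$. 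Once this is available, \eqref{eq:2.C.51} and \eqref{eq:2.C.52} follow by applying it to $g = \min(f,s)$ and $g = \max(f-s,0)$ respectively, after identifying the level sets $\{\min(f,s) \geq \alpha\} = \{f \geq \alpha\}$ for $\alpha \leq s$ (and empty otherwise) and $\{\max(f-s,0) \geq \alpha\} = \{f \geq \alpha+s\}$ for $\alpha > 0$, followed by the change of variable $\beta = \alpha + s$.

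To prove the master identity I would exploit that the anisotropic norm splits as $\norm{\nabla\{g\}}_{\M^2} = \norm{[\I\otimes\Dd]\{g\}}_{\M} + \norm{[\Dd\otimes\I]\{g\}}_{\M}$, a sum of two scalar directional total variations, and that coarea is additive over this sum because both directions share the same level sets $\{g \geq \alpha\}$. For each direction it then suffices to prove $\norm{[\Dd\otimes\I]\{g\}}_{\M} = \int_0^\infty \norm{[\Dd\otimes\I]\{\mathbbm{1}_{\{g\geq\alpha\}}\}}_{\M}\,\mathrm{d}\alpha$, and likewise for $\I\otimes\Dd$. Each such one-directional identity reduces, by Fubini, to the classical one-dimensional coarea formula for $\BV(\R)$ functions applied to the slices of $g$ along the relevant axis: on each slice the total variation equals the integral over levels of the number of boundary points of the one-dimensional level sets, and reassembling the slices with Fubini recovers both the two-dimensional directional total variation and the directional perimeters of the two-dimensional level sets. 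The isotropic version of this computation is classical \cite{ambrosio2000functions}; the only new feature here is the bookkeeping that keeps the two coordinate directions separate, which the $\ell_1$ structure of the anisotropic norm makes transparent.

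With the integral representations \eqref{eq:2.C.51} and \eqref{eq:2.C.52} in hand, the remaining assertions are immediate. The integrand being nonnegative, $P_-(f,\cdot)$ is nondecreasing and $P_+(f,\cdot)$ is nonincreasing in $s$; continuity of both follows from the absolute continuity of the Lebesgue integral with respect to its domain of integration, since the total mass $\int_0^\infty \norm{\nabla\{\mathbbm{1}_{\{f\geq\alpha\}}\}}_{\M^2}\,\mathrm{d}\alpha = \norm{\nabla\{f\}}_{\M^2}$ is finite and dominated convergence applies. Adding \eqref{eq:2.C.51} and \eqref{eq:2.C.52} and recombining the two integration ranges yields $P_-(f,s) + P_+(f,s) = \int_0^\infty \norm{\nabla\{\mathbbm{1}_{\{f\geq\alpha\}}\}}_{\M^2}\,\mathrm{d}\alpha$, which the master identity identifies with $\norm{\nabla\{f\}}_{\M^2}$, giving \eqref{eq:2.C.53}. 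The main obstacle is the rigorous justification of the Fubini slicing in the master identity, namely the measurability of the map $\alpha \mapsto \norm{\nabla\{\mathbbm{1}_{\{f\geq\alpha\}}\}}_{\M^2}$ and the commutation of the directional total-variation measure with the slicing decomposition, together with verifying that almost every level set has finite anisotropic perimeter so that the integrand is meaningful.
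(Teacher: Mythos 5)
Your proposal is correct and takes essentially the same route as the paper: the paper's entire proof consists of citing the coarea formula for BV functions with general (anisotropic) norms \cite{fleming1960integral,rotem2023anisotropic} and observing that \eqref{eq:2.C.51} and \eqref{eq:2.C.52} follow by applying it to $\min(f,s)$ and $\max(f-s,0)$, which is exactly your reduction. The only difference is that where the paper cites the anisotropic coarea formula, you sketch its standard proof (splitting the $\ell_1$-type norm into the two directional variations, slicing, the one-dimensional coarea formula, and Fubini) and spell out the monotonicity and continuity consequences the paper leaves implicit; this is sound but is a filled-in version of the same argument rather than a different one.
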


The coarea formula, for the $\ell_2$ norm on the gradient, was proved for Lipschitz functions in \cite{federer1959curvature}, for BV functions in \cite{fleming1960integral} and was later extended for general norms (for example, see \cite{rotem2023anisotropic}). Equations \eqref{eq:2.C.51} and \eqref{eq:2.C.52} follow from the application of the coarea formula on $\text{min}(f,s)$ and $\text{max}(f-s,0)$. 

Next, we provide a new formula inspired by the coarea one. We define the functions $C_-(f,\cdot)$ and $C_+(f,\cdot)$ as 
\begin{equation}
    C_-(f,s)=\norm{[\DD]\{\mathrm{min}(f,s)\}}_{\M},
\end{equation}
\begin{equation}
    C_+(f,s)=\norm{[\DD]\{\mathrm{max}(f-s,0)\}}_{\M}.
\end{equation}
In Lemma \ref{lemma1}, we adapt Theorem \ref{th:coarea} for the operator $\DD$ and for piecewise constant functions.
\begin{Lemma}
\label{lemma1}
Let $f\in\M_{\DD,0}^+(\K)$ be such that $[\DD]\{f\}$ is a discrete measure with finitely many atoms. Then, the functions $C_-(f,\cdot)$ and $C_+(f,\cdot)$ are continuous, respectively increasing and decreasing, and satisfy the equalities
\begin{equation}
    C_-(f,s)=\int_{0}^s\norm{[\DD]\{\mathbbm{1}_{\{x:f(x)\geq\alpha\}}\}}_{\M}\mathrm{d}\alpha,
\end{equation}
\begin{equation}
     C_+(f,s)=\int_{s}^{\infty}\norm{[\DD]\{\mathbbm{1}_{\{x:f(x)\geq\alpha\}}\}}_{\M}\mathrm{d}\alpha.
\end{equation}
In addition, they verify the cocorner formula 
\begin{align}
\label{eq:2.C.53}
    \norm{[\DD]\{f\}}_{\M}=&\,C_-(f,s)+C_+(f,s)\nonumber\\
    =&\int_{0}^{\infty}\norm{[\DD]\{\mathbbm{1}_{\{x:f(x)\geq\alpha\}}\}}_{\M}\mathrm{d}\alpha.
\end{align}    
\end{Lemma}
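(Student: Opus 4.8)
The plan is to reduce the cocorner formula for $\DD$ to a statement about a \emph{finite} family of levels, exploiting the hypothesis that $[\DD]\{f\}$ is a discrete measure with finitely many atoms. Since $f\in\M_{\DD,0}^+(\K)$ and $[\DD]\{f\}=\sum_{k=1}^K a_k\,\delta_{b_k}\otimes\delta_{c_k}$, the function $f(\bb{t})=m([0,t_1]\times[0,t_2])$ from Proposition~\ref{prop:finitecorners} is piecewise constant, taking only finitely many distinct nonnegative values, say $0=v_0<v_1<\dots<v_L$. I would first record that the superlevel sets $A_\alpha=\{x:f(x)\geq\alpha\}$ are constant in $\alpha$ on each interval $]v_{j-1},v_j]$, so that the integrand $\alpha\mapsto\norm{[\DD]\{\mathbbm{1}_{A_\alpha}\}}_{\M}$ is a step function with finitely many jumps. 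This immediately makes the right-hand integrals in the statement well-defined and finite, and reduces them to finite sums $\sum_j (v_j-v_{j-1})\,\norm{[\DD]\{\mathbbm{1}_{A_{v_j}}\}}_{\M}$.

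The heart of the argument is a \emph{layer-cake decomposition}: I would write $f=\int_0^\infty \mathbbm{1}_{A_\alpha}\,\mathrm{d}\alpha$, which for the piecewise-constant $f$ is the finite telescoping sum $f=\sum_{j=1}^L (v_j-v_{j-1})\,\mathbbm{1}_{A_{v_j}}$, and apply the linearity of $[\DD]$ to get $[\DD]\{f\}=\sum_j (v_j-v_{j-1})\,[\DD]\{\mathbbm{1}_{A_{v_j}}\}$. The key geometric point, which I expect to be the main obstacle, is that there is \emph{no cancellation} among the atoms contributed by different layers: I must show that $\norm{[\DD]\{f\}}_{\M}=\sum_j (v_j-v_{j-1})\norm{[\DD]\{\mathbbm{1}_{A_{v_j}}\}}_{\M}$, i.e.\ that the total-variation norm is additive across the layers rather than merely subadditive. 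Because the superlevel sets are nested, $A_{v_1}\supset A_{v_2}\supset\dots\supset A_{v_L}$, at any given point $(b,c)$ in the plane the corner-signs $a_k$ contributed by the various indicators $\mathbbm{1}_{A_{v_j}}$ that have a corner there must all agree in sign (an outward corner of an inner level set cannot be an inward corner of an outer one). I would argue this by examining the local increment structure of the monotone, $\{0,1\}$-valued-per-level function $f$ at each candidate atom location, using the same $2\times2$ increment analysis as in the proof of Proposition~\ref{prop:finitecorners}, to conclude that the atoms add in absolute value.

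With additivity of the norm across layers established, the cocorner identity $\norm{[\DD]\{f\}}_{\M}=\int_0^\infty\norm{[\DD]\{\mathbbm{1}_{A_\alpha}\}}_{\M}\,\mathrm{d}\alpha$ follows by identifying both sides with the same finite sum. To obtain the split and monotonicity properties, I would apply the layer-cake identity to the truncations $\min(f,s)$ and $\max(f-s,0)$: these have superlevel sets $A_\alpha$ for $\alpha\le s$ and $\alpha>s$ respectively (up to the obvious shift), so $C_-(f,s)$ and $C_+(f,s)$ reduce to the integrals over $[0,s]$ and $[s,\infty[$ of the same nonnegative integrand. Continuity and monotonicity in $s$ are then immediate from the fact that each is an integral of a fixed nonnegative step function over a varying interval, and the additive splitting $\norm{[\DD]\{f\}}_{\M}=C_-(f,s)+C_+(f,s)$ is the decomposition of the total integral at the threshold $s$. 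The only care needed is to verify that $\min(f,s)$ and $\max(f-s,0)$ remain in $\M_{\DD,0}^+(\K)$ with discrete mixed derivative — which again follows from the piecewise-constant structure — so that Proposition~\ref{prop:finitecorners} applies to them as well.
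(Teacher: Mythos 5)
Your proposal is correct and follows essentially the same route as the paper: both reduce everything to the finitely many values of the piecewise-constant $f$, write the truncations as telescoping sums of indicators of nested superlevel sets, and hinge on the same key observation that nestedness forces coinciding corner atoms to carry amplitudes of the same sign, so the $\M$-norm is additive across layers. The layer-cake phrasing and the appeal to the $2\times2$ increment analysis of Proposition~\ref{prop:finitecorners} are only cosmetic differences from the paper's direct computation of $\tilde{C}_-(f,s)$ as the norm of a single mixed derivative.
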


\begin{proof}[\textbf{Proof of Lemma \ref{lemma1}}]
We set $[\DD]\{f\}=m=\sum_{k=1}^Ka_k\delta_{\bb{x}_k}$ and define
\begin{equation}
    \tilde{C}_-(f,s)=\int_{0}^s\norm{[\DD]\{\mathbbm{1}_{\{x:f(x)\geq\alpha\}}\}}_{\M}\mathrm{d}\alpha.
\end{equation}
Observe that $f$ takes finitely many values $(v_0,v_1,\ldots,v_M)$ with $v_m \leq v_{m+1}$, $v_0=0$ and $v_{M+1}=\infty$. Then, for $\alpha \in ]v_{m}, v_{m+1}[$ one has that 
\begin{equation}
    \{x : f(x) \geq \alpha\} = \{x : f(x) > v_{m}\}.
\end{equation}
For $s \in ]v_{m'}, v_{m'+1}[$ with $m' \in [0 \cdots M-1]$, we calculate that 
\begin{align}
\label{eq:2.1.11}
    \tilde{C}_-(f,s) = & \sum_{m=1}^{m'} (v_m - v_{m-1}) \norm{[\DD]\{\mathbbm{1}_{\{x : f(x) > v_{m-1}\}}\}}_{\M} \nonumber \\
    & + (s - v_{m'}) \norm{[\DD]\{\mathbbm{1}_{\{x : f(x) > v_{m'}\}}\}}_{\M}.
\end{align}
It follows directly from the representation \eqref{eq:2.1.11} that $\tilde{C}_-(f,\cdot)$ is increasing and continuous. Next, we observe that $[\DD]\{\mathbbm{1}_{\{x : f(x) > \alpha\}}\}$ may only be non-zero on $\bb{x}_k$. In addition, for $\alpha < \alpha'$, $\bb{x}_k$ is in the support of both $[\DD]\{\mathbbm{1}_{\{x : f(x) > \alpha'\}}\}$ and $[\DD]\{\mathbbm{1}_{\{x : f(x) > \alpha\}}\}$ if and only if it is a corner of $\mathbbm{1}_{\{x : f(x) > \alpha'\}}$ and $\mathbbm{1}_{\{x : f(x) > \alpha\}}$, in which case the Dirac mass centred at $\bb{x}_k$ must have an amplitude of the same sign because $\{x : f(x) > \alpha'\} \subset \{x : f(x) > \alpha\}$. It follows from this remark and \eqref{eq:2.1.11} that
\begin{multline}
    \tilde{C}_-(f,s) = \Bigg\| [\DD]\left\{\sum_{m=1}^{m'} (v_m - v_{m-1}) \mathbbm{1}_{\{x : f(x) > v_{m-1}\}}\right\} \\
    + [\DD]\{(s - v_{m'}) \mathbbm{1}_{\{x : f(x) > v_{m'}\}}\} \Bigg\|_{\M}
\end{multline}
and, consequently, that $\tilde{C}_-(f,s) = C_-(f,s).$ A similar argumentation would show that $C_+(f,\cdot)$ is continuous, decreasing, and that 
\begin{align}
     C_+(f,s)=\int_{s}^{\infty}\norm{[\DD]\{\mathbbm{1}_{\{x:f(x)\geq\alpha\}}\}}_{\M}\mathrm{d}\alpha.
\end{align}
Finally, 
\begin{align}
    C_-(f,s)+C_+(f,s)&=\int_{0}^{\infty}\norm{[\DD]\{\mathbbm{1}_{\{x:f(x)\geq\alpha\}}\}}_{\M}\mathrm{d}\alpha\nonumber\\
    &=\underset{s\to\infty}{\text{lim}}\norm{[\DD]\{\text{min}(f,s)\}}_{\M}\nonumber\\
    &=\norm{[\DD]\{f\}}_{\M}.
\end{align}
\end{proof}

In Lemma \ref{lemma2} we extend, with a limit argument, Lemma \ref{lemma1} for arbitrary functions in $\M_{\DD,0}^+(\K).$
\begin{Lemma}
\label{lemma2}
If $f\in\M_{\DD,0}^+(\K)$, then the functions 
$C_-(f,\cdot)$ and $C_+(f,\cdot)$ are continuous and respectively increasing and decreasing. In addition, they verify the equality 
\begin{equation}
\label{eq:2.A.22}
    \forall s>0:\quad \norm{[\DD]\{f\}}=C_-(f,s)+C_+(f,s).
\end{equation}
\end{Lemma}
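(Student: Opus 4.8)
The plan is to transfer the cocorner formula of Lemma \ref{lemma1}, which is established only for functions with finitely many corners, to an arbitrary $f \in \M_{\DD,0}^+(\K)$ by a density argument based on Lemma \ref{lemma:approximation}. First I would observe that, since $f$ vanishes outside $\K$ and $f \geq 0$, both truncations $\min(f,s)$ and $\max(f-s,0)$ again belong to $\M_{\DD,0}^+(\K)$ (positivity and the support condition are preserved), so that $C_-(f,s)$ and $C_+(f,s)$ are well-defined elements of $[0,\infty]$. The central object to establish is the splitting identity \eqref{eq:2.A.22}; monotonicity and continuity will then follow from it by soft arguments.

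For the splitting identity, the upper bound is immediate: since $f = \min(f,s) + \max(f-s,0)$ one has $[\DD]\{f\} = [\DD]\{\min(f,s)\} + [\DD]\{\max(f-s,0)\}$, and the triangle inequality for $\norm{\cdot}_{\M}$ (valid in $[0,\infty]$) gives $\norm{[\DD]\{f\}}_{\M} \leq C_-(f,s) + C_+(f,s)$. For the reverse bound I would apply Lemma \ref{lemma:approximation} to $f$, obtaining a sequence $(f_n) \subset \M_{\DD,0}^+(\K)$ whose mixed derivatives are finitely supported discrete measures (the construction preserving positivity), so that Lemma \ref{lemma1} yields $C_-(f_n,s) + C_+(f_n,s) = \norm{[\DD]\{f_n\}}_{\M}$ for every $n$. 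Since $f_n \to f$ in $\mathcal{L}_1(\K)$ (item 3) and $x \mapsto \min(x,s)$, $x \mapsto \max(x-s,0)$ are $1$-Lipschitz, we get $\min(f_n,s) \to \min(f,s)$ and $\max(f_n-s,0) \to \max(f-s,0)$ in $\mathcal{L}_1(\K)$, hence in $\D'(\R^2)$. The lower semicontinuity of the total-variation norm under distributional convergence then gives $C_-(f,s) \leq \liminf_n C_-(f_n,s)$ and $C_+(f,s) \leq \liminf_n C_+(f_n,s)$. Adding these, using $\liminf a_n + \liminf b_n \leq \liminf(a_n+b_n)$ together with item 2 of Lemma \ref{lemma:approximation} ($\norm{[\DD]\{f_n\}}_{\M} \to \norm{[\DD]\{f\}}_{\M}$), I obtain $C_-(f,s)+C_+(f,s) \leq \norm{[\DD]\{f\}}_{\M}$, which closes the identity and in passing shows $C_\pm(f,s) < \infty$.

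With \eqref{eq:2.A.22} in hand, monotonicity follows by applying the identity to truncated functions. For $0 < s \leq s'$, writing $g = \min(f,s') \in \M_{\DD,0}^+(\K)$ and using $\min(f,s) = \min(g,s)$, the splitting identity for $g$ gives $C_-(f,s') = \norm{[\DD]\{g\}}_{\M} = \norm{[\DD]\{\min(g,s)\}}_{\M} + \norm{[\DD]\{\max(g-s,0)\}}_{\M} \geq C_-(f,s)$, so $C_-(f,\cdot)$ is increasing; the analogous computation with $h = \max(f-s,0)$ and $\max(f-s',0) = \max(h-(s'-s),0)$ shows that $C_+(f,\cdot)$ is decreasing.

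Finally, for continuity I would use lower semicontinuity in the parameter $s$: as $s \to s_0$, $\min(f,s) \to \min(f,s_0)$ uniformly, hence in $\D'(\R^2)$, so $C_-(f,s_0) \leq \liminf_{s \to s_0} C_-(f,s)$, which combined with monotonicity forces left-continuity of $C_-(f,\cdot)$, and symmetrically right-continuity of $C_+(f,\cdot)$. The constant-sum identity \eqref{eq:2.A.22} then transfers right-continuity from $C_+$ to $C_-$ and left-continuity from $C_-$ to $C_+$, yielding continuity of both. I expect the main obstacle to be the reverse inequality in the splitting identity: one must correctly combine the lower semicontinuity of $\norm{[\DD]\{\cdot\}}_{\M}$ for the two truncations simultaneously with the exact identity of Lemma \ref{lemma1} and the norm convergence of Lemma \ref{lemma:approximation}, while ensuring the approximants may be taken in the positive cone so that Lemma \ref{lemma1} is applicable.
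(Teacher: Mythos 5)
Your proof is correct, and it rests on the same two pillars as the paper's: the exact cocorner formula of Lemma~\ref{lemma1} applied to the approximants of Lemma~\ref{lemma:approximation}, combined with lower semicontinuity of $\norm{[\DD]\{\cdot\}}_{\M}$ and the norm convergence $\norm{[\DD]\{f_n\}}_{\M}\to\norm{[\DD]\{f\}}_{\M}$. For the splitting identity, your chain (triangle inequality one way, superadditivity of $\liminf$ plus semicontinuity the other) is essentially the paper's \eqref{eq:2.C.101}--\eqref{eq:2.C.last} without the contradiction phrasing; but your semicontinuity mechanism is more elementary: you deduce distributional convergence of the truncations from $\mathcal{L}_1$ convergence and $1$-Lipschitzness and test against $\C_c^{\infty}$ functions, whereas the paper establishes genuine $\w$-convergence of $\min(f_n,s)$ in $\M_{\DD,0}^+(\K)$, which needs the uniform bound \eqref{eq:truc} and a density argument in the predual. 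Your route also settles, in passing, whether $[\DD]\{\min(f,s)\}$ is a bounded measure at all. The larger divergence is downstream: the paper proves monotonicity and continuity by invoking its Claim~1 (subsequential convergence $C_{\pm}(f_n,s)\to C_{\pm}(f,s)$) and by rerunning Step~1 along sequences $s_n\to s$; you bypass Claim~1 entirely, getting monotonicity from the splitting identity applied to $g=\min(f,s')$ and $h=\max(f-s,0)$ via the truncation identities $\min(\min(f,s'),s)=\min(f,s)$ and $\max(\max(f-s,0)-(s'-s),0)=\max(f-s',0)$, and continuity from monotonicity, one-sided semicontinuity in $s$, and the constant-sum identity. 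What the paper's weak$^{\star}$ setup buys is coherence with the topology used throughout; what yours buys is fewer moving parts and no subsequence extraction. The one step you should make explicit is the lower-semicontinuity claim itself, namely that $\sup\{\langle g,[\DD]\{\phi\}\rangle:\phi\in\C_c^{\infty}(\R^2),\ \norm{\phi}_{\infty}\leq1\}$ equals $\norm{[\DD]\{g\}}_{\M}$ (with value $+\infty$ allowed), so that it is a supremum of distributionally continuous functionals.
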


\begin{proof}[\textbf{Proof of Lemma \ref{lemma2}}]
\hfill\\
\textbf{Step 1.} Consider a sequence of functions $(f_n)_{n=1}^{\infty}$ converging to $f$ as in Lemma \ref{lemma:approximation}. Without loss of generality, we assume that $f_n \in \M_{\DD,0}^+(\K)$. Our Claim 1 is that, up to a subsequence,
\begin{equation}
\label{eq:2.A.26}
    \underset{n \to \infty}{\text{lim}} C_-(f_n, s) = C_-(f, s),
\end{equation}
and
\begin{equation}
\label{eq:2.A.27}
    \underset{n \to \infty}{\text{lim}} C_+(f_n, s) = C_+(f, s).
\end{equation}
To prove this, we first recall that 
\begin{equation}
    \text{min}(f_n, s) = \frac{f_n + s - |f_n - s|}{2}.
\end{equation}
Item 2 of Lemma \ref{lemma:approximation} implies that $f_n + s$ and $f_n - s$ converge in $\mathcal{L}_1(\K)$ to $f + s$ and $f - s$, which, together with the triangle inequality, yields the convergence of $|f_n - s|$ to $|f - s|$. Therefore, $\text{min}(f_n, s)$ converges in $\mathcal{L}_1(\K)$ to $\text{min}(f, s)$. In particular,
\begin{multline}
\label{eq:2.A.19}
    \forall \psi \in \mathcal{L}_{\infty}([0,1]) \otimes \mathcal{L}_{\infty}([0,1]) \subset \mathcal{L}_{\infty}([0,1]^2): \\
    \underset{n \to \infty}{\text{lim}} \langle \text{min}(f_n, s), \psi \rangle = \langle \text{min}(f, s), \psi \rangle.
\end{multline}
Remark that 
\begin{equation}
\label{eq:remark.74}
    \Dd\{\C_c^{\infty}(\R)\}\vert_{[0,1]} \otimes \Dd\{\C_c^{\infty}(\R)\}\vert_{[0,1]} \subset \mathcal{L}_{\infty}([0,1]) \otimes \mathcal{L}_{\infty}([0,1])
\end{equation}
and that $\Dd\{\C_c^{\infty}(\R)\} \otimes \Dd\{\C_c^{\infty}(\R)\}$ is dense in $[\DD]\{\C_0(\R^2)\}$. Furthermore, it follows from Item 2 of Lemma \ref{lemma:approximation} that
\begin{align}
    \norm{[\DD]\{\text{min}(f_n, s)\}}_{\M} &\leq \norm{[\DD]\{f_n\}}_{\M}\nonumber\\
    &\leq \norm{[\DD]\{f\}}_{\M} \label{eq:truc}.
\end{align}
Consequently, the sequence $(\text{min}(f_n, s))_{n=1}^{\infty}$ is uniformly bounded and, from \eqref{eq:remark.74}, $\w$-convergent in $\M_{\DD,0}^{+}(\K)$ to $\text{min}(f, s)$. A similar argument shows that the sequence $(\text{max}(f_n - s, 0))_{n=1}^{\infty}$ is uniformly bounded and $\w$-convergent in $\M_{\DD,0}^{+}(\K)$ to $\text{max}(f - s, 0)$. It follows that 
\begin{equation}
\label{eq:2.C.101}
    \norm{[\DD]\{\text{min}(f, s)\}}_{\M} \leq \underset{n \to \infty}{\text{liminf }} C_-(f_n, s),
\end{equation}
and that
\begin{equation}
\label{eq:2.C.102}
    \norm{[\DD]\{\text{max}(f - s, 0)\}}_{\M} \leq \underset{n \to \infty}{\text{liminf }} C_+(f_n, s).
\end{equation}
In addition, 
\begin{align}
    C_{-}(f_n,s)+C_{+}(f_n,s)&=\norm{[\DD]\{f_n\}}_{\M}\\&\leq\norm{[\DD]\{f\}}_{\M}.\label{eq:2.C.103}
\end{align}
Finally, if either \eqref{eq:2.C.101} or \eqref{eq:2.C.102} has a strict inequality, we conclude that 
\begin{align}
    &\norm{[\DD]\{f\}}_{\M}\nonumber\\
    \leq&\norm{[\DD]\{\text{min}(f,s)\}}_{\M}+\norm{[\DD]\{\text{max}(f-s,0)\}}_{\M}\label{eq:2.C.105}\\
    <&\underset{n\to\infty}{\text{ liminf }} C_-(f_n,s)+\underset{n\to\infty}{\text{liminf }} C_+(f_n,s)\label{eq:2.C.106}\\
    =&\norm{[\DD]\{f\}}_{\M}\label{eq:2.C.last},
\end{align}
which is a contradiction. In \eqref{eq:2.C.105} we used the fact that 
\begin{equation}
    f=\text{min}(f,s)+\text{max}(f-s,0),
\end{equation}
and the triangular inequality. In \eqref{eq:2.C.last} we used \eqref{eq:2.C.103}. Consequently, \eqref{eq:2.C.101} and \eqref{eq:2.C.102} hold with an equality and, up to a subsequence, \eqref{eq:2.A.26} and \eqref{eq:2.A.27} hold.
\hfill\\
\textbf{Step 2.} The function  $C_-(f,\cdot)$ is increasing. Indeed, Lemma \ref{lemma1} yields that, $\forall n\in\mathbb{N}$ and $\forall s_1\leq s_2$,
\begin{align}
    &C_-(f_n,s_1)\leq C_-(f_n,s_2)
    \Rightarrow C_-(f,s_1)\leq C_-(f,s_2),
\end{align}
where we used Claim 1 to pass to the limit. The claim that $C_+(f,\cdot)$ is decreasing and \eqref{eq:2.A.22} are proved likewise.
\hfill\\
\textbf{Step 3.} We now show that $C_-(f, \cdot)$ and $C_+(f, \cdot)$ are continuous. Let $(s_n)_{n=1}^{\infty}$ be a sequence convergent to $s$. An argument similar to the one in Step 1 yields that (in parallel with \eqref{eq:2.C.101} and \eqref{eq:2.C.102})
\begin{align}
    C_-(f, s) &= \underset{n \to \infty}{\mathrm{liminf}} \quad C_-(f, s_n), \nonumber \\
    C_+(f, s) &= \underset{n \to \infty}{\mathrm{liminf}} \quad C_+(f, s_n),
\end{align}
from which it follows that 
\begin{align}
    \norm{[\DD]\{f\}}_{\M} &= \underset{n \to \infty}{\mathrm{liminf}} \quad C_-(f, s_n) + C_+(f, s_n) \nonumber \\
    &\leq \underset{n \to \infty}{\mathrm{limsup}} \quad C_-(f, s_n) + C_+(f, s_n) \label{eq:2.C.94} \\
    &= \underset{n \to \infty}{\mathrm{limsup}} \norm{[\DD]\{f\}}_{\M} \label{eq:2.C.95} \\
    &= \norm{[\DD]\{f\}}_{\M}.
\end{align}
Equation \eqref{eq:2.C.95} follows from \eqref{eq:2.A.22}, and the inequality in \eqref{eq:2.C.94} cannot be strict. It follows that
\begin{align}
    C_-(f, s) &= \underset{n \to \infty}{\mathrm{liminf}} \quad C_-(f, s_n) = \underset{n \to \infty}{\mathrm{limsup}} \quad C_-(f, s_n), \nonumber \\
    C_+(f, s) &= \underset{n \to \infty}{\mathrm{liminf}} \quad C_+(f, s_n) = \underset{n \to \infty}{\mathrm{limsup}} \quad C_+(f, s_n).
\end{align}

\end{proof}

\subsection{Extreme Point Characterization}
We are now in a position to state our main result, Theorem \ref{th:extreme}.
\begin{Theorem}
\label{th:extreme}
The extreme points of the unit ball in $\M_{\DD,0}^+(\K)$, equipped with $\norm{\cdot}_{\theta}$,  
are exactly of the form $a\mathbbm{1}_A$ with $a = \frac{1}{\norm{\mathbbm{1}_A}_{\theta}}$ and,
\begin{itemize}
    \item if $\theta = 1,$ then $A\in\mathcal{E}_{\DD}$;
    \item if $\theta \in ]0,1[,$ then $A \in \mathcal{E}_{\nabla}.$
\end{itemize}
\end{Theorem}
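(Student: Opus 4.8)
The plan is to prove the two inclusions separately: every extreme point has the claimed form (necessity), and every function of the claimed form is an extreme point (sufficiency). For necessity I would first show that an extreme point $f$, normalized so that $\norm{f}_{\theta}=1$, must be a scaled indicator. The engine is the two-term identity
\begin{equation}
  \norm{f}_{\theta} = \norm{\min(f,s)}_{\theta} + \norm{\max(f-s,0)}_{\theta},
\end{equation}
valid for every $s>0$, obtained by combining $P_-(f,s)+P_+(f,s)=\norm{\nabla\{f\}}_{\M^2}$ from Theorem \ref{th:coarea} with $C_-(f,s)+C_+(f,s)=\norm{[\DD]\{f\}}_{\M}$ from Lemma \ref{lemma2}, weighted by $\theta$ and $1-\theta$. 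Since $f=\min(f,s)+\max(f-s,0)$ and the $\theta$-norm is additive across this split, whenever both pieces are nonzero $f$ is a convex combination of the two normalized pieces, both in the unit ball. If $f$ were not a scaled indicator it would take at least two distinct positive values on positive-measure sets, and choosing $s$ strictly between them makes $\min(f,s)$ not proportional to $f$, so the combination is nontrivial and contradicts extremality. Hence $f=a\mathbbm{1}_A$ with $a=1/\norm{\mathbbm{1}_A}_{\theta}$.

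Next I would show that $A$ must be indecomposable in the sense dictated by $\theta$. Suppose $A$ admits a decomposition $(A_1,A_2)$. For $\theta=1$ this is a $[\DD]$-decomposition (Definition \ref{def:decomposable2}), so $\norm{\mathbbm{1}_A}_1=\norm{\mathbbm{1}_{A_1}}_1+\norm{\mathbbm{1}_{A_2}}_1$ and $a\mathbbm{1}_A$ splits as a nontrivial convex combination of the normalized $\mathbbm{1}_{A_i}$, contradicting extremality; thus $A\in\mathcal{E}_{\DD}$. For $\theta\in]0,1[$ I would start from a $\nabla$-decomposition (Definition \ref{def:decomposable}) and invoke the argument behind Item 2 of Proposition \ref{prop:sets}, which shows that any $\nabla$-decomposition is simultaneously a $[\DD]$-decomposition; both the $\nabla$- and $[\DD]$-terms are then additive, the full $\theta$-norm splits additively, and the same convex-combination argument forces $A\in\mathcal{E}_{\nabla}$.

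For sufficiency, take $A$ in the relevant class and suppose $a\mathbbm{1}_A=\tfrac12(g_1+g_2)$ with $g_1\neq g_2$ in the unit ball. Positivity and $\norm{\cdot}_{\theta}=1$ force $g_1,g_2\geq0$, both supported in $A$ with $g_1+g_2=2a$ on $A$, and equality in the triangle inequality for the relevant measure norms. For $\theta\in]0,1[$ this yields $\norm{\nabla\{g_1\}}_{\M^2}+\norm{\nabla\{g_2\}}_{\M^2}=\norm{\nabla\{g_1+g_2\}}_{\M^2}=2a\norm{\nabla\{\mathbbm{1}_A\}}_{\M^2}$; applying the coarea formula of Theorem \ref{th:coarea} to $g_1$ and $g_2$, rewriting the level sets of $g_2=2a\mathbbm{1}_A-g_1$ as $A\setminus\{g_1\geq s\}$, and using $\norm{\nabla\{\mathbbm{1}_A\}}_{\M^2}\leq\norm{\nabla\{\mathbbm{1}_{B_t}\}}_{\M^2}+\norm{\nabla\{\mathbbm{1}_{A\setminus B_t}\}}_{\M^2}$ for $B_t=\{g_1\geq t\}$, I would conclude that for almost every $t$ the pair $(B_t,A\setminus B_t)$ is a $\nabla$-decomposition of $A$ unless one piece is null. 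Since $A\in\mathcal{E}_{\nabla}$, each $B_t$ is null or coincides with $A$ up to a null set; monotonicity of $t\mapsto\vert B_t\vert$ then forces $g_1$ constant on $A$, and normalization gives $g_1=a\mathbbm{1}_A=g_2$, a contradiction. For $\theta=1$ the equality $\norm{[\DD]\{g_1\}}_{\M}+\norm{[\DD]\{g_2\}}_{\M}=\norm{[\DD]\{g_1+g_2\}}_{\M}$, together with the fact that $[\DD]\{g_1+g_2\}$ is a finite discrete measure by Proposition \ref{prop:finitecorners}, forces the total variations of $[\DD]\{g_1\}$ and $[\DD]\{g_2\}$ to be dominated by that of $[\DD]\{g_1+g_2\}$, hence supported on the same finite set; Lemma \ref{lemma1} then applies to $g_1$ and $g_2$, and the same level-set argument with $\mathcal{E}_{\DD}$ in place of $\mathcal{E}_{\nabla}$ yields the contradiction.

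I expect the sufficiency direction to be the main obstacle, and within it the extraction of an almost-everywhere pointwise equality from the integrated coarea/cocorner identity, i.e.\ propagating the global additivity of the norm into a genuine decomposition of $A$ at almost every level, together with the measure-theoretic step (for $\theta=1$) upgrading equality in the triangle inequality into discreteness of $[\DD]\{g_1\}$ so that Lemma \ref{lemma1} becomes applicable. The necessity direction, by contrast, is essentially bookkeeping once Theorem \ref{th:coarea}, Lemma \ref{lemma2}, and Proposition \ref{prop:sets} are in hand.
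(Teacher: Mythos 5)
Your necessity direction is essentially the paper's own argument: the paper also fuses Theorem~\ref{th:coarea} and Lemma~\ref{lemma2} into the split $F_{\pm}(s)=\theta C_{\pm}(f,s)+(1-\theta)P_{\pm}(f,s)$, uses it to write $f$ as a convex combination of the normalized pieces $\min(f,s)$ and $\max(f-s,0)$, concludes that $f=a\mathbbm{1}_A$, and then rules out decomposable $A$ exactly as you do (via Item~2 of Proposition~\ref{prop:sets} in the regime $\theta\in\,]0,1[$). Your sufficiency direction, however, takes a genuinely different route. The paper localizes the norm to arbitrary measurable sets $E$ through $\norm{\cdot}_{\theta,E}$, shows the convex split must be additive on every $E$, and then argues with the Constancy Theorem (for $\theta\in\,]0,1[$) or a combinatorial analysis of shared corners (for $\theta=1$). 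You instead upgrade equality in the triangle inequality to equality of the total-variation \emph{measures}, and then run the coarea/cocorner identities over level sets to manufacture, at almost every level $t$, a decomposition $(B_t,A\setminus B_t)$ of $A$, which indecomposability forbids unless $B_t$ is null or full; monotonicity of $t\mapsto\vert B_t\vert$ then forces constancy. Your mechanism is more unified (one argument for both regimes, no Constancy Theorem, no corner combinatorics), at the price of needing level-set versions of the norm identities.

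There is one genuine gap, in the $\theta\in\,]0,1[$ case. Definition~\ref{def:decomposable} requires the pieces of a $\nabla$-decomposition to satisfy $\mathbbm{1}_{B_t}\in\M_{\DD,0}(\K)$ and $\mathbbm{1}_{A\setminus B_t}\in\M_{\DD,0}(\K)$, i.e.\ to have \emph{finite mixed-derivative measure}. The coarea formula only gives finite perimeter for a.e.\ $t$, and finite perimeter does not imply membership in $\M_{\DD,0}(\K)$: a smooth radial bump has finite $\norm{[\DD]\{\cdot\}}_{\M}$, yet its level sets are disks, whose indicators do not lie in $\M_{\DD,0}(\K)$ by Proposition~\ref{prop:finitecorners} (this is also the content of Item~3 of Proposition~\ref{prop:BV}). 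As written, your pair $(B_t,A\setminus B_t)$ therefore does not qualify as a $\nabla$-decomposition, and you cannot invoke $A\in\mathcal{E}_{\nabla}$ to conclude.

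The repair is already in your proposal, just placed in the wrong regime. Since $\theta>0$, equality in the $\theta$-norm triangle inequality forces equality in \emph{both} terms separately (both weights are strictly positive), so the domination step you reserve for $\theta=1$ applies for all $\theta\in\,]0,1]$: from
\begin{equation}
\vert[\DD]\{g_1\}\vert+\vert[\DD]\{g_2\}\vert=\vert[\DD]\{g_1+g_2\}\vert=2a\,\vert[\DD]\{\mathbbm{1}_A\}\vert
\end{equation}
as measures, and the finite atomicity of $[\DD]\{\mathbbm{1}_A\}$ (Proposition~\ref{prop:finitecorners}), each $[\DD]\{g_i\}$ is a finite discrete measure supported on the corners of $A$, hence each $g_i$ is piecewise constant with finitely many pieces. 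Its level sets then have finitely many corners, so they do lie in $\M_{\DD,0}(\K)$, and your coarea argument closes. Note that this ordering is also forced for a second reason: the integral (level-set) form of the cocorner formula is only available for piecewise-constant functions (Lemma~\ref{lemma1}); Lemma~\ref{lemma2} gives only the two-term split for general $f$. So in both regimes you must establish piecewise constancy of $g_1,g_2$ via the $[\DD]$-domination first, and only then run the level-set argument.
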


\begin{proof}[\textbf{Proof of Theorem \ref{th:extreme}}]
\hfill\\
\textbf{Step 1.} We show that an extreme point must be of the form $a\mathbbm{1}_A$.  
Let $f$ be an extreme point and consider the functions
\begin{equation}
    F_-(s) = \theta C_-(f,s) + (1-\theta) P_-(f,s).
\end{equation}
\begin{equation}
    F_+(s) = \theta C_+(f,s) + (1-\theta) P_+(f,s).
\end{equation}
We know that $F_-(\cdot)$ and $F_+(\cdot)$ are continuous and respectively increasing and decreasing.  
In addition, they satisfy the equality
\begin{equation}
    1 = \norm{f}_{\theta} = F_-(s) + F_+(s).
\end{equation}
By continuity, there exists $s_1$ such that $F_-(s_1) = F_+(s_1) = 0.5$, with the decomposition
\begin{equation}
    f = \frac{1}{2} \, 2\text{min}(f,s_1) + \frac{1}{2} \, 2\text{max}(f-s_1,0).
\end{equation}
We find that 
\begin{equation}
  \norm{2\text{min}(f,s_1)}_{\theta} = 2F_-(f,s_1) = 1
\end{equation}
and that
\begin{equation}
  \norm{2\text{max}(f - s_1, 0)}_{\theta} = 2F_+(f,s_1) = 1.
\end{equation}
Since $f$ is an extreme point, we must have that 
\begin{equation}
    2\text{max}(f - s_1, 0) = 2\text{min}(f, s_1).
\end{equation}
If $f \geq s_1$, then $f = 2 s_1$. If $f \leq s_1$, then $f = 0.$  
These two equations imply that $f$ has to be of the form $a \mathbbm{1}_A$ with $a = \frac{1}{\norm{\mathbbm{1}_A}_{\theta}}.$  
Finally, if $\theta \in ]0,1[,$ we assume by contradiction that $E$ is not $\nabla$-indecomposable. Then, it is not $[\DD]$-indecomposable and there exist two sets $A_1, A_2$, such that $E = A_1 \cup A_2$, 
\begin{multline}
    \norm{[\DD]\{\mathbbm{1}_{A}\}}_{\M} \\
    = \norm{[\DD]\{\mathbbm{1}_{A_1}\}}_{\M} + \norm{[\DD]\{\mathbbm{1}_{A_2}\}}_{\M},
\end{multline}
and
\begin{equation}
    \norm{\nabla\{\mathbbm{1}_{A}\}}_{\M^2} = \norm{\nabla\{\mathbbm{1}_{A_1}\}}_{\M^2} + \norm{\nabla\{\mathbbm{1}_{A_2}\}}_{\M^2}.
\end{equation}
We set $f_1 = \frac{\mathbbm{1}_{A_1}}{\norm{\mathbbm{1}_{A_1}}_{\theta}}$, $f_2 = \frac{\mathbbm{1}_{A_2}}{\norm{\mathbbm{1}_{A_2}}_{\theta}}$ and find that 
\begin{equation}
    a\mathbbm{1}_A = a \norm{\mathbbm{1}_{A_1}}_{\theta} \mathbbm{1}_{A_1} + a \norm{\mathbbm{1}_{A_2}}_{\theta} \mathbbm{1}_{A_2},
\end{equation}
which is in contradiction with the assumption that $a \mathbbm{1}_{A}$ is an extreme point. The case $\theta = 1$ is similar and omitted.
\hfill\\
\textbf{Step 2.} We show that $a\mathbbm{1}_{A}$ is an extreme point. Assume by contradiction that it is not. Then, there exists $\lambda \in\,]0,1[$ and two functions $f_1 \neq f_2$ in the unit ball such that 
\begin{equation}
    a\mathbbm{1}_{A} = \lambda f_1 + (1-\lambda) f_2. 
\end{equation}
Let $E \subset [0,1]^2$ be a measurable set. We define 
\begin{equation}
    \norm{\cdot}_{\theta,E} = \theta \norm{[\DD]\{\cdot\}}_{\M(E)} + (1-\theta) \norm{\nabla\{\cdot\}}_{\M(E)^2}.
\end{equation}
We must have
\begin{equation}
\label{eq:2.D.110}
\norm{a\mathbbm{1}_A}_{\theta,E} = \lambda \norm{f_1}_{\theta,E} + (1-\lambda) \norm{f_2}_{\theta,E},
\end{equation}
otherwise we would reach the contradiction 
\begin{align}
    1 &= \norm{a\mathbbm{1}_A}_{\theta,E} + \norm{a\mathbbm{1}_E}_{\theta,E^c} \nonumber\\
      &< \lambda \norm{f_1}_{\theta,E} + (1-\lambda) \norm{f_2}_{\theta,E} \nonumber\\
      &\quad + \lambda \norm{f_1}_{\theta,E^c} + (1-\lambda) \norm{f_2}_{\theta,E^c} = 1,
\end{align}
which is impossible.

\indent If $\theta \in\, ]0,1[$, then \eqref{eq:2.D.110} implies that $\mathrm{supp}\{\nabla\{f_i\}\} \subset \partial A.$ In particular, $\nabla\{f_i\}$ must vanish in the interior $\overset{\circ}{A}$. It follows from the Constancy Theorem \cite{dolzmann1995microstructures}, \cite[Lemma 4.6]{bredies2020sparsity} that $f_i$ is constant on $A$. Finally, since 
$a\mathbbm{1}_A = \lambda f_1 + (1-\lambda) f_2$
with $f_1 \geq 0$ and $f_2 \geq 0$, it follows that $f_1$ and $f_2$ must vanish on $\mathbb{R}^2 \setminus A$ and satisfy $f_1 = f_2 = a \mathbbm{1}_A$. This contradicts the assumption that $f_1 \neq f_2$.

If $\theta=1$, then $\mathrm{supp}\{[\DD]\{f_1\}\}$ and $\mathrm{supp}\{[\DD]\{f_2\}\}$ must be subsets of the finite set of corners of $A$, denoted by $A_c$. We use $E_c$ with subscript $c$ to denote the set of corners $\mathrm{supp}([\DD]\{1_E\})$ of a set $E$. The functions $f_1$ and $f_2$ must be piecewise constant on $A$ and vanish outside. Assume, by contradiction, that $f_1$ and $f_2$ are not constant on $A$. Then,
\begin{equation}
f_i=\sum_{k=1}^Ka_{i,k}\mathbbm{1}_{E_k},\quad\text{with}a_{i,k}\neq a_{i,k'},
\end{equation}
and, by construction, $a_{2,k}=a-a_{1,k}$. We claim that there exists a $k\in[1\cdots K]$, wlog $k=1$, such that
\begin{equation}
\begin{cases}
A_c\subset E_{1,c}\\
\partial A\subset\partial E.
\end{cases}
\end{equation}
Assume by contradiction that there exists no such $k$. Then, there exists a corner $\tilde x$ and two indices $(k,k')$, wlog $(k,k')=(1,2)$, such that 
\begin{equation}
\begin{cases}
\tilde{x}\in\partial A\\
\tilde x\notin A_c\\
\tilde x\in E_{1,c}\\
\tilde x\in E_{2,c}.
\end{cases}
\end{equation} 
This implies that $\tilde x\notin\mathrm{supp}([\DD]\{f_i\})$ and, therefore, that one must have  $a_{i,1}=a_{i,2}$ for the associated Dirac mass to cancel each other.
We apply the same argumentation to find an index k, wlog $k=2$, such that
\begin{equation}
\begin{cases}
\partial E_1\setminus \partial A\subset \partial E_{2}\\
E_{1,c}\setminus A_c\subset E_{2,c}.
\end{cases}
\end{equation}
In particular, $E_1$ and $E_2$ share a corner $\tilde x\notin\mathrm{supp}([\DD]\{f_i\})$, which therefore must be cancelled. This happens if and only if $a_{i,1}=a_{i,2}$,
which is a contradiction. Thus, $f_1=a_{1,1}E_{1}$, $f_2=a_{2,1}E_1,$ and $a_{1,1}=a_{2,1}$.  Finally $f_{1}=f_{2}$, which is a contradiction with the initial assumption. 
 
\end{proof}
In Corollary \ref{coro:opt}, we leverage the characterisation of extreme points from Theorem \ref{th:extreme} to analyse the solution set of a data-fidelity term regularised by $\norm{\cdot}_{\theta}.$

\begin{Corollary}
\label{coro:opt}
    Let the loss functional $\mathcal{J}$ be defined as
\begin{equation}
    \mathcal{J}(\cdot) = E(\bb{y}, \langle \cdot, \bm{\nu} \rangle) + \lambda \norm{\cdot}_{\theta}
\end{equation}
with
\begin{itemize}
    \item $\bb{y} \in \R^M$ is a fixed-dimensional vector representing the available measurements;
    \item the data-fidelity functional $E : \R^M \times \R^M \to \R^+ \cup \{\infty\}$ is proper, coercive, continuous, and strictly convex in its second argument;
    \item the measurement operator $\langle \cdot, \bm{\nu} \rangle : \M_{\DD,0}^+(\K) \to \R^M$ is linear and $\mathrm{weak}^\star$ continuous.
\end{itemize}
Then, the solution set
\begin{equation}
\label{eq:2.D.133}
    \mathcal{V} = \underset{f \in \M_{\DD,0}^+(\K)}{\operatorname{argmin}} \, \mathcal{J}(f)
\end{equation}
is nonempty, convex, and $\w$-compact. Moreover, its extreme points are of the form
\begin{equation}
 \label{eq:2.D.134}
    \sum_{k=1}^K a_k \mathbbm{1}_{E_k}, \quad \text{with} \quad K \leq M,
\end{equation}
where $E_k \in \mathcal{E}_{\nabla}$ if $\theta \in ]0,1[$, and $E_k \in \mathcal{E}_{\DD}$ if $\theta = 1$.
\end{Corollary}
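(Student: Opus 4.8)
The plan is to settle the first three properties by the direct method of the calculus of variations, and then to combine a reduction to a constrained minimum-norm problem with the extreme-point description of the unit ball furnished by Theorem~\ref{th:extreme}.

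\emph{Existence, convexity, and $\w$-compactness.} First I would note that $\mathcal{J}$ is convex, since $E(\bb{y},\cdot)$ is convex, $\langle\cdot,\bm{\nu}\rangle$ is linear, and $\norm{\cdot}_{\theta}$ is a norm. It is $\w$-lower semicontinuous: the data term $f\mapsto E(\bb{y},\langle f,\bm{\nu}\rangle)$ is $\w$-continuous as the composition of the $\w$-continuous map $\langle\cdot,\bm{\nu}\rangle$ into $\R^M$ with the continuous $E(\bb{y},\cdot)$, while $\norm{\cdot}_{\theta}$, being the norm of a dual space by Item~3 of Theorem~\ref{th:normequivalence}, is $\w$-lsc. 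It is coercive, since $E\geq 0$ gives $\mathcal{J}(f)\geq\lambda\norm{f}_{\theta}$, so its sublevel sets are norm-bounded. Because $\M_{\DD,0}(\K)$ is the dual of the separable predual $[\DD]\{\C_0(\R^2)\}/M_0$, norm-bounded sets are $\w$-sequentially compact (Banach--Alaoglu together with metrizability), and the positive cone is $\w$-closed; extracting a $\w$-convergent subsequence from a minimizing sequence and invoking lower semicontinuity yields a minimizer, so $\mathcal{V}\neq\emptyset$. Convexity of $\mathcal{V}$ is immediate, and $\mathcal{V}=\{f:\mathcal{J}(f)\leq\min\mathcal{J}\}$ is a bounded $\w$-closed level set, hence $\w$-compact.

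\emph{Reduction to a minimum-norm problem.} Next I would exploit the strict convexity of $E$ in its second argument: if two minimizers had distinct measurements, their midpoint would strictly decrease $\mathcal{J}$, a contradiction, so $\langle f,\bm{\nu}\rangle\equiv\bb{z}^{\star}$ is constant on $\mathcal{V}$, and consequently $\norm{f}_{\theta}\equiv t^{\star}$ is constant there as well. A two-sided inclusion then gives $\mathcal{V}=t^{\star}\,B^{+}_{\theta}\cap\{f:\langle f,\bm{\nu}\rangle=\bb{z}^{\star}\}$, where $B^{+}_{\theta}$ denotes the unit ball of $\big(\M_{\DD,0}^+(\K),\norm{\cdot}_{\theta}\big)$; that is, $\mathcal{V}$ is a dilation of the intersection of the $\w$-compact convex ball with the $M$ affine hyperplanes $\langle\cdot,\bm{\nu}\rangle=\bb{z}^{\star}$. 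The degenerate case $t^{\star}=0$ forces $\mathcal{V}=\{0\}$, since $\norm{\cdot}_{\theta}$ is a genuine norm, whose single extreme point is the empty sum, consistent with $K=0\leq M$.

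\emph{Extreme points.} With this reduction, the extreme points of $\mathcal{V}$ are $t^{\star}$ times the extreme points of $B^{+}_{\theta}\cap\{f:\langle f,\bm{\nu}\rangle=\bb{z}^{\star}/t^{\star}\}$. I would then invoke the abstract representer/Dubins-type result for finite-dimensional data over the $\w$-compact ball of a dual space \cite{dubins1962extreme,boyer2019representer,bredies2020sparsity,unser2025universal}: every such extreme point is a finite convex combination of at most $M$ extreme points of $B^{+}_{\theta}$. By Theorem~\ref{th:extreme}, every nonzero extreme point of $B^{+}_{\theta}$ equals $a\mathbbm{1}_A$ with $a=1/\norm{\mathbbm{1}_A}_{\theta}$ and $A\in\mathcal{E}_{\DD}$ when $\theta=1$ or $A\in\mathcal{E}_{\nabla}$ when $\theta\in\,]0,1[$, the zero extreme point contributing nothing. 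Rescaling by $t^{\star}$ and collecting terms yields the announced form $\sum_{k=1}^K a_k\mathbbm{1}_{E_k}$ with $K\leq M$ and each $E_k$ in the appropriate indecomposable class. I expect the delicate point to be precisely the sharp count $K\leq M$ rather than the $M+1$ delivered by a naive application of Dubins' theorem to a ball intersected with $M$ hyperplanes: recovering the extra unit requires using that every element of $\mathcal{V}$ lies on the boundary sphere $\norm{\cdot}_{\theta}=t^{\star}$, equivalently discarding the superfluous extreme point $0\in\operatorname{ext}B^{+}_{\theta}$ that would otherwise absorb one degree of freedom, together with the $\w$-compactness needed to make the Krein--Milman representation of the ball rigorous in this infinite-dimensional setting.
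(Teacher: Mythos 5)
Your proposal is correct and takes essentially the same route as the paper: a direct-method argument (coercivity, boundedness of a minimizing sequence, Banach--Alaoglu, $\w$ lower semicontinuity) for nonemptiness, convexity, and $\w$-compactness, followed by the combination of an abstract representer theorem with Theorem~\ref{th:extreme} for the extreme-point form --- your minimum-norm reduction and Dubins-type counting (including the $K\leq M$ versus $M+1$ discussion) is precisely the content of \cite[Corollary 3.8]{boyer2019representer}, which is what the paper invokes at that step. The one caveat is your justification of the $\w$ lower semicontinuity of $\norm{\cdot}_{\theta}$: Item~3 of Theorem~\ref{th:normequivalence} exhibits only $\norm{[\DD]\{\cdot\}}_{\M}$ (the case $\theta=1$) as a dual norm, and an equivalent norm need not be $\w$-lsc, so for $\theta\in\,]0,1[$ the gradient term requires a separate argument (e.g., testing $\nabla\{f_n\}$ against cutoffs of continuous functions, which is legitimate because all $[\DD]\{f_n\}$ are supported in the fixed compact set $\K$) --- though the paper's own proof likewise asserts this semicontinuity without justification.
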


\begin{proof}[\textbf{Proof of Corollary \ref{coro:opt}}]
Define $\mathcal{J}^{\star}=\underset{f\in\M_{\DD,0}^+(\K)}{\text{inf}}\mathcal{J}(f)$ and consider a sequence of solutions $(f_n)_{n=1}^{\infty}$ such that 
\begin{equation}
    -\infty<\underset{n\to\infty}{\text{lim}}\mathcal{J}(f_n)=\mathcal{J}^{\star}<\infty,
\end{equation}
which exists because $\mathcal{J}$ is coercive and proper. It follows that $(f_n)_{n=1}^{\infty}$ is bounded in $\norm{\cdot}_{\theta}$ and, by norm equivalence, bounded in $\norm{[\DD]\{\cdot\}}_{\M}$. According to the Banach–Alaoglu theorem, up to a subsequence, $(f_n)_{n=1}^{\infty}$ is $\w$-convergent to some limit $f$. It follows from the $\w$-lower semicontinuity of $\mathcal{J}$ that 
\begin{equation}
   \underset{n\to\infty}{\text{lim}}\mathcal{J}(f_n)=\mathcal{J}(f)=\mathcal{J}^{\star}.
\end{equation}
Consequently, $\mathcal{V}$ is nonempty, and the characterisation of its extreme points follows from \cite[Corollary 3.8]{boyer2019representer} and Theorem \ref{th:extreme}.
\end{proof}

The norm $\norm{[\DD]\{\cdot\}}_{\M}$ penalises the number of corners of the sets $E_k$, whereas $\norm{\nabla\{\cdot\}}_{\M^2}$ penalises their perimeter. Each has a distinct regularisation effect on the solutions of the optimisation problem. Informally, the former promotes solutions with simpler sets $E_k$, while the latter favours solutions with smaller sets $E_k$. This effect is proportional to $\theta$, which, in practice, must be tuned.

We conclude this section with Proposition \ref{prop:admissible}, which identifies an important class of $\w$-continuous measurement operators $\langle\cdot,\bm{\nu}\rangle$.

\begin{Proposition}
\label{prop:admissible}
    If $\bm{\nu}=(\nu_m)_{m=1}^M$ is such that $\nu_m\in\mathcal{L}_{\infty}(\mathrm{K})$, then the operator 
    \begin{equation}
        \langle\cdot,\bm{\nu}\rangle:\M_{\DD,0}(\K)\to\R^M,\quad f\mapsto(\langle f,\nu_m\rangle)_{m=1}^{M}
    \end{equation}
    is $\w$-continuous.
\end{Proposition}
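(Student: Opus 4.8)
The plan is to show that $\mathrm{weak}^\star$ continuity of the measurement operator follows from the fact that each component functional $f \mapsto \langle f, \nu_m \rangle$ is $\mathrm{weak}^\star$ continuous, which in turn reduces to identifying each $\nu_m \in \mathcal{L}_\infty(\K)$ with an element of the predual $[\DD]\{\C_0(\R^2)\}/M_0$ from Item 3 of Theorem \ref{th:normequivalence}. Since $\mathrm{weak}^\star$ continuity of a linear map into $\R^M$ is equivalent to the $\mathrm{weak}^\star$ continuity of each of its $M$ scalar components, it suffices to treat a single $\nu = \nu_m$.

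First I would observe that, by the characterisation of $\mathrm{weak}^\star$ convergence recorded just after Theorem \ref{th:normequivalence}, a sequence $(f_n)$ converges $\mathrm{weak}^\star$ to $f$ in $\M_{\DD,0}(\K)$ precisely when $\langle f_n, v \rangle \to \langle f, v \rangle$ for every $v \in [\DD]\{\C_0(\R^2)\}$. Hence the functional $\langle \cdot, \nu \rangle$ is $\mathrm{weak}^\star$ continuous if and only if $\nu$ lies in the closure (in the relevant topology) of $[\DD]\{\C_0(\R^2)\}$ as it acts on $\M_{\DD,0}(\K)$; equivalently, $\nu$ must define the same pairing as some element of the predual modulo $M_0$. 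The key step is therefore to produce, for a given $\nu \in \mathcal{L}_\infty(\K)$, a function $\phi \in \C_0(\R^2)$ such that $[\DD]\{\phi\}$ and $\nu$ induce identical pairings against every $f \in \M_{\DD,0}(\K)$.

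The concrete construction exploits Proposition \ref{prop:carac}: every $f \in \M_{\DD,0}(\K)$ can be written as $f = (u \otimes u) \ast m$ with $m = [\DD]\{f\} \in \M(\K)$. Using Fubini and the integral representation $f(\bb{t}) = m([0,t_1]\times[0,t_2])$ from \eqref{eq:2.B.59}, the pairing $\langle f, \nu \rangle = \int_\K \nu(\bb{t}) f(\bb{t})\,\mathrm{d}\bb{t}$ can be rewritten, by swapping the order of integration, as $\int_\K g(x_1,x_2)\,\mathrm{d}m(x_1,x_2)$, where $g(x_1,x_2) = \int_{x_1}^{1}\int_{x_2}^{1} \nu(t_1,t_2)\,\mathrm{d}t_2\,\mathrm{d}t_1$ is the iterated antiderivative of $\nu$ taken from the corner $(1,1)$. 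Because $\nu \in \mathcal{L}_\infty(\K)$, this $g$ is continuous on $\K$ and vanishes on the edges $\{x_1 = 1\}$ and $\{x_2 = 1\}$; extending it suitably to $\R^2$ yields a $\phi \in \C_0(\R^2)$ with $[\DD]\{\phi\} = \nu$ in the pairing sense, so that $\langle f, \nu \rangle = \langle m, g \rangle = \langle f, [\DD]\{\phi\}\rangle$. This exhibits $\nu$ as a valid predual element, from which $\mathrm{weak}^\star$ continuity is immediate.

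I expect the main obstacle to be the regularity and boundary bookkeeping in the construction of $\phi$: one must verify that the double antiderivative $g$ genuinely lies in the predual $[\DD]\{\C_0(\R^2)\}$ (continuity and decay), and that the integration-by-parts/Fubini exchange is justified for an $\mathcal{L}_\infty$ symbol paired against a function of bounded mixed variation. Handling the support constraint — ensuring the extension of $g$ off $\K$ does not spoil membership in $\C_0(\R^2)$ while preserving the pairing with functions supported in $\K$ — is the delicate point; once the identification $\langle f, \nu\rangle = \langle f, [\DD]\{\phi\}\rangle$ is secured, continuity follows mechanically from the definition of the $\mathrm{weak}^\star$ topology.
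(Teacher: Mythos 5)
Your proof is correct, but it follows a different route from the paper's. The paper's proof is a three-line citation argument: it invokes the companion work \cite{guillemet2025tensor} for the chain of embeddings $\M_{\DD,0}(\K)\subset\M_{\DD}(\K)\subset\M_{\DD}(\R^2)$, for the duality $\M_{\DD}(\R^2)=\big(\C_{\DD}(\R^2)\big)'$, and for the inclusion $\mathcal{L}_{\infty}(\K)\subset\C_{\DD}(\R^2)$, from which $\w$-continuity is inherited by restriction. You instead construct the predual representative explicitly: writing $f=(u\otimes u)\ast m$ with $m=[\DD]\{f\}$ (Proposition \ref{prop:carac}) and applying Fubini, you obtain
\begin{equation}
\langle f,\nu\rangle=\int_{\K}g(x_1,x_2)\,\mathrm{d}m(x_1,x_2),\qquad g(x_1,x_2)=\int_{x_1}^{1}\!\!\int_{x_2}^{1}\nu(t_1,t_2)\,\mathrm{d}t_2\,\mathrm{d}t_1,
\end{equation}
where $g$ is (Lipschitz) continuous on $\K$, and then extend $g$ by Tietze to some $\phi\in\C_c(\R^2)\subset\C_0(\R^2)$; since $m$ is supported in $\K$, the pairing is insensitive to the extension, so $\langle f,\nu\rangle=\langle [\DD]\{f\},\phi\rangle=\langle f,[\DD]\{\phi\}\rangle$, i.e., $\langle\cdot,\nu\rangle$ is evaluation at a predual element and hence $\w$-continuous. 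The Fubini exchange you flag as delicate is in fact routine (the integrand is bounded and both measures are finite), and your concern about decay of the extension is resolved exactly as you suggest. What your version buys is self-containedness and directness: it works with the $\w$ topology of $\M_{\DD,0}(\K)$ exactly as defined via the predual $[\DD]\{\C_0(\R^2)\}/M_0$ of Theorem \ref{th:normequivalence}, rather than with the a priori different $\w$ topology that $\M_{\DD}(\R^2)$ carries as dual of $\C_{\DD}(\R^2)$ (a compatibility the paper's restriction argument silently assumes), and it essentially reproves, in this restricted setting, the embedding $\mathcal{L}_{\infty}(\K)\subset\C_{\DD}(\R^2)$ that the paper outsources; the price is length, which is what the citation-based proof economizes.
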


\begin{proof}[\textbf{Proof of Proposition \ref{prop:admissible}}]
From \cite{guillemet2025tensor} we have 
\begin{equation}
\label{eq:2.D.157}
    \M_{\DD,0}(\K)\subset\M_{\DD}(\K)\subset\M_{\DD}(\R^2).
\end{equation}
The space $\M_{\DD}(\R^2)$, which is essentially the space of distributions $f\in\D'(\R^2)$ such that $[\DD]\{f\}\in\M(\R^2)$, is the dual space of $\C_{\DD}(\R^2)$. In particular, it is shown in \cite{guillemet2025tensor} that $\mathcal{L}_{\infty}(\K)\subset\C_{\DD}(\R^2)$. This implies that if $\bm{\nu}=(\nu_m)_{m=1}^M$ is such that $\nu_m\in\mathcal{L}_{\infty}(\mathrm{K})$, then the operator 
    \begin{equation}
        \langle\cdot,\bm{\nu}\rangle:\M_{\DD}(\R^2)\to\R^M,\quad f\mapsto(\langle f,\nu_m\rangle)_{m=1}^{M}
    \end{equation}
    is $\w$-continuous. This, together with \eqref{eq:2.D.157}, completes the proof.
\end{proof}

\section{Resolution of the Optimization Problem}
The resolution of the problem in \eqref{eq:2.D.133} is not directly feasible, as the underlying search space $\M_{\DD,0}^{+}(\K)$ is infinite-dimensional. Moreover, the fact that the edges and corners of $E_k$ in \eqref{eq:2.D.134} may lie anywhere in $\K$ makes their estimation challenging.

\subsection{Pixel-Based Exact Discretization}
We discretize the search space into a finite-dimensional cone, of functions of the form \eqref{eq:2.D.134}, with corners that lie on the dyadic grid
\begin{equation}
    \mathcal{X}_{n}=2^{-n}[0\cdots 2^n-1]\times2^{-n}[0\cdots 2^n-1],
\end{equation}
with $(2^{n}+1)^2$ knots, and associated pixels
\begin{equation}
    E^{n}_{n_1,n_2}=\left[\frac{(n_1-1)}{2^n},\frac{n_1}{2^n}\right]\times\left[\frac{(n_2-1)}{2^n},\frac{n_2}{2^n}\right].
\end{equation}
The discretized search space $\M_{\DD,0}^+(\mathcal{X}_n)$ is
\begin{equation}
\label{eq:3.A.140}
    \left\{f\in\M_{\DD,0}^+(\K):\quad[\DD]\{f\}\in\M(\mathcal{X}_n)\right\},
\end{equation}
and the associated discretized optimization problem is as in 
\begin{equation}
\label{eq:3.A.125}
    \mathcal{V}_{n}=\underset{f\in\M_{\DD,0}^+(\mathcal{X}_{n})}{\text{argmin}}\left(E(\bb{y},\langle f,\bm{\nu}\rangle)+\lambda\norm{f}_{\theta}\right).
\end{equation}
Our objective is now to reformulate \eqref{eq:3.A.125} as an optimisation problem over $\R^{2^n\times2^n}_+$, where $\R_+=[0,\infty[$. We note that any function $f\in\M_{\DD,0}^+(\mathcal{X}_{n})$ can be uniquely expressed as 
\begin{multline} 
    f=\sum_{n_1,n_2=1,1}^{2^n,2^n}a_{n_1,n_2}\mathbbm{1}_{E_{n_1,n_2}^{n}},\\
    \bb{a}=(a_{n_1,n_2})_{n_1,n_2=1,1}^{2^n,2^n}\in\R^{2^n\times2^n}_+,
\end{multline}
which corresponds to the tensor product of B-splines of order~1. Furthermore, we introduce the synthesis operator $\mathrm{T}$
\begin{multline}
    \mathrm{T}:\M_{\DD,0}^+(\mathcal{X}_{n})\to\R_+^{2^n\times2^n},\\
    f\mapsto\left(2^{2n}\langle f,\mathbbm{1}_{E^{n}_{n_1,n_2}}\rangle\right)_{n_1,n_2=1,1}^{2^n,2^n},
\end{multline}
and its adjoint, the analysis operator $\mathrm{T}^{\star}$
\begin{multline}
    \mathrm{T}^{\star}:\R_+^{2^n\times2^n}\to\M_{\DD,0}^+(\mathcal{X}_{n}),\\
    \bb{a}\mapsto\sum_{n_1,n_2=1,1}^{2^n,2^n}a_{n_1,n_2}\mathbbm{1}_{E_{n_1,n_2}^{n}}.
\end{multline}The operators $\mathrm{T}$ and $\mathrm{T}^{\star}$ are implicitly dependent on the grid $\mathcal{X}_n$. In the sequel, we shall occasionally regard $\bb{a}$ as a sequence indexed over $\mathbb{Z}^2$, in which case its entries will be denoted by $\bb{a}[\cdot,\cdot]$ and taken to be equal to $0$ outside $[0\cdots2^n]\times[0\cdots2^n]$. We consider the new optimisation problem  
\begin{equation}   
\label{eq:3.A.144}
\tilde{\mathcal{V}}_{n}=\underset{\bb{a}\in\R^{2^n\times2^n}_+}{\mathrm{argmin}}\left(E(\bb{y},\langle \mathrm{T}^{\star}\{\bb{a}\},\bm{\nu}\rangle)+\lambda\norm{\bb{a}}_{\theta}\right),
\end{equation}
where 
\begin{multline}
    \label{eq:3.A.135}
    \norm{\bb{a}}_{\theta}=(1-\theta)2^{-n}\left(\norm{\bm{h}_{1,0}\ast\bb{a}}_{\ell_1}+\norm{\bm{h}_{0,1}\ast\bb{a}}_{\ell_1}\right)\\
    +\theta\norm{\bm{h}_{1,1}\ast\bb{a}}_{\ell_1},
\end{multline}
with 
\begin{equation}
    \bm{h}_{1,1}=\begin{bmatrix}
        1&-1\\
        -1&1
    \end{bmatrix},\quad
    \bm{h}_{1,0}=\begin{bmatrix}
        1\\
        -1
    \end{bmatrix},\quad
    \bm{h}_{0,1}=\begin{bmatrix}
        1&-1\\
    \end{bmatrix}.
\end{equation}
A key feature of this formulation, as shown in Proposition~\ref{prop:AnalysisSynthesis}, is that the problem in \eqref{eq:3.A.144} is the exact discretisation of the problem in \eqref{eq:3.A.125}, \emph{i.e.}, it incurs no discretisation error.

\begin{Proposition}
\label{prop:AnalysisSynthesis}
The optimization problem in \eqref{eq:3.A.144} is equivalent to the one in \eqref{eq:3.A.125}, in the sense that,
\begin{align}
    f\in\mathcal{V}_n&\Leftrightarrow\mathrm{T}\{f\}\in\tilde{\mathcal{V}}_n.
\end{align}
\end{Proposition}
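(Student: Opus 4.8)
The plan is to establish the equivalence $f\in\mathcal{V}_n\Leftrightarrow\mathrm{T}\{f\}\in\tilde{\mathcal{V}}_n$ by showing that $\mathrm{T}$ and $\mathrm{T}^{\star}$ are mutually inverse bijections between $\M_{\DD,0}^+(\mathcal{X}_n)$ and $\R_+^{2^n\times2^n}$ that leave both the data-fidelity term and the regularizer invariant, so that the two objective functionals coincide under the identification $\bb{a}=\mathrm{T}\{f\}$. Since an argmin is preserved under a bijective change of variables that leaves the objective value unchanged, the equivalence follows immediately once these two invariances are verified.

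First I would verify the \emph{bijection}: every $f\in\M_{\DD,0}^+(\mathcal{X}_n)$ is, by \eqref{eq:3.A.140} and Proposition~\ref{prop:finitecorners}, a linear combination $\sum a_{n_1,n_2}\mathbbm{1}_{E^n_{n_1,n_2}}$ with $[\DD]\{f\}$ supported on $\mathcal{X}_n$, and the pixels $E^n_{n_1,n_2}$ are linearly independent, so the coefficients $\bb{a}$ are unique and nonnegative. A direct computation of $2^{2n}\langle\mathbbm{1}_{E^n_{k,l}},\mathbbm{1}_{E^n_{n_1,n_2}}\rangle=\delta_{k,n_1}\delta_{l,n_2}$ (each pixel has area $2^{-2n}$ and the pixels are essentially disjoint) shows that $\mathrm{T}\{\mathrm{T}^{\star}\{\bb{a}\}\}=\bb{a}$ and $\mathrm{T}^{\star}\{\mathrm{T}\{f\}\}=f$, establishing that $\mathrm{T}=(\mathrm{T}^{\star})^{-1}$. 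This immediately gives $\langle f,\bm{\nu}\rangle=\langle\mathrm{T}^{\star}\{\mathrm{T}\{f\}\},\bm{\nu}\rangle$, so the data term in \eqref{eq:3.A.125} matches that in \eqref{eq:3.A.144}.

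Next I would verify that the regularizer is exactly discretized, namely $\norm{f}_{\theta}=\norm{\mathrm{T}\{f\}}_{\theta}$ with the discrete norm of \eqref{eq:3.A.135}. The key is that for $f=\mathrm{T}^{\star}\{\bb{a}\}$ the three measures $[\mathrm{I}\otimes\Dd]\{f\}$, $[\Dd\otimes\mathrm{I}]\{f\}$, and $[\DD]\{f\}$ are discrete measures whose atoms sit on grid lines and whose weights are exactly the finite differences $\bm{h}_{0,1}\ast\bb{a}$, $\bm{h}_{1,0}\ast\bb{a}$, and $\bm{h}_{1,1}\ast\bb{a}$. For a tensor product of order-1 B-splines, differentiating in one variable replaces each pixel by the difference of two Dirac walls; since these atoms lie at distinct locations, the total variation norm $\norm{\cdot}_{\M}$ collapses to the $\ell_1$ norm of the coefficient differences, and the factors $2^{-n}$ arise from the one-dimensional Lebesgue measure of the surviving direction (the edge length), while the purely atomic mixed derivative $[\DD]\{f\}$ carries no such length factor. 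Assembling these three identities with the weights $\theta$ and $(1-\theta)$ reproduces \eqref{eq:3.A.135} exactly.

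The main obstacle is the careful bookkeeping in the preceding step: one must confirm that no two Dirac atoms coming from adjacent pixels coincide in a way that would allow cancellation and thereby break the passage from $\norm{\cdot}_{\M}$ to $\ell_1$. Because $\bb{a}$ is permitted to be an arbitrary nonnegative array (not necessarily the indicator of a single set), the differences $\bm{h}_{0,1}\ast\bb{a}$ etc.\ may take any sign, and one has to check that atoms at genuinely distinct grid points never overlap so that $\norm{\sum_j w_j\delta_{x_j}}_{\M}=\sum_j|w_j|$ holds verbatim; this is where the tensor-product structure and the disjointness of the grid points $\mathcal{X}_n$ must be invoked explicitly. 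Once this is settled, equality of the two functionals on corresponding points is immediate, and the equivalence of the argmin sets follows.
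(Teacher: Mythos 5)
Your proposal is correct and follows essentially the same route as the paper's proof: regard $\mathrm{T}$ and $\mathrm{T}^{\star}$ as mutually inverse bijections (so the data-fidelity terms match trivially) and reduce the claim to the three norm identities $\norm{[\DD]\{f\}}_{\M}=\norm{\bm{h}_{1,1}\ast\bb{a}}_{\ell_1}$, $\norm{[\mathrm{I}\otimes\Dd]\{f\}}_{\M}=2^{-n}\norm{\bm{h}_{0,1}\ast\bb{a}}_{\ell_1}$, $\norm{[\Dd\otimes\mathrm{I}]\{f\}}_{\M}=2^{-n}\norm{\bm{h}_{1,0}\ast\bb{a}}_{\ell_1}$, obtained by expanding the derivatives of the B-spline representation into atomic (or wall-type) measures whose combined weights at each grid location are exactly the finite differences. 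The bookkeeping issue you flag, namely grouping the coinciding contributions from adjacent pixels before passing from $\norm{\cdot}_{\M}$ to $\ell_1$, is precisely what the paper's re-indexed sum over $\mathbb{Z}^2$ accomplishes.
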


\begin{proof}[\textbf{Proof of Proposition \ref{prop:AnalysisSynthesis}}]
Let $\bb{a}=\mathrm{T}\{f\}$. Observing that $\mathrm{T}^{\star}$ is the inverse of $\mathrm{T}$, it suffices to prove that 
\begin{align}
\norm{[\DD]\{f\}}_{\M}&=\norm{\bm{h}_{1,1}\ast\bb{a}}_{\ell_1},\label{eq:3.A.133}\\
\norm{[\mathrm{I}\otimes\Dd]\{f\}}_{\M}&=2^{-n}\norm{\bm{h}_{0,1}\ast\bb{a}}_{\ell_1},\\
\norm{[\Dd\otimes\mathrm{I}]\{f\}}_{\M}&=2^{-n}\norm{\bm{h}_{1,0}\ast\bb{a}}_{\ell_1}.
\end{align}
We prove only \eqref{eq:3.A.133}, as the reasoning for the other two is analogous.
We observe that
\begin{align} 
\mathbbm{1}_{E_{n_1,n_2}^{n}}=\beta\left(2^{n}\cdot-(n_1-1)\right)\otimes\beta\left(2^{n}\cdot-(n_2-1)\right),
\end{align}
 and calculate that 
\begin{align}
    &[\DD]\{f\}\nonumber\\
    =&\sum_{n_1,n_2=1,1}^{2^n,2^n}\bb{a}[n_1,n_2]\left(\delta_{\frac{n_1}{2^{n}}}-\delta_{\frac{(n_1-1)}{2^{n}}}\right)\otimes\left(\delta_{\frac{n_2}{2^{n}}}-\delta_{\frac{(n_2-1)}{2^{n}}}\right)\nonumber\\
    =&\sum_{(n_1,n_2)\in\mathbb{Z}^2}\bb{a}[n_1,n_2]\left(\delta_{\frac{n_1}{2^{n}}}-\delta_{\frac{(n_1-1)}{2^{n}}}\right)\otimes\left(\delta_{\frac{n_2}{2^{n}}}-\delta_{\frac{(n_2-1)}{2^{n}}}\right)\nonumber\\
=&\sum_{(n_1,n_2)\in\mathbb{Z}^2}\delta_{\frac{n_1}{2^{n}},\frac{n_2}{2^{n}}}\bigg(\bb{a}[n_1,n_2]-\bb{a}[n_1+1,n_2]\nonumber\\
&-\bb{a}[n_1,n_2+1]+\bb{a}[n_1+1,n_2+1]
    \bigg).
\end{align}
Thus 
\begin{align}
    &\norm{[\DD]\{f\}}_{\M}=\sum_{(n_1,n_2)\in\mathbb{Z}^2}\big\vert(\bm{h}_{1,1}\ast\bb{a})[n_1,n_2]\big\vert.
\end{align}
\end{proof}
Proposition \ref{prop:AnalysisSynthesis} enables one to obtain a solution $f_n^{\star}\in\mathcal{V}_n$ by solving the problem in \eqref{eq:3.A.144}. Moreover, since the operator  
\begin{equation}
\label{eq:3.A.156}
    \langle\mathrm{T}^{\star}\{\cdot\},\bm{\nu}\rangle:\R^{2^n\times2^n}_+\to\R^M
\end{equation}
is linear, when $\bb{a}$ is vectorised, the operator in \eqref{eq:3.A.156} can be represented by a matrix. Consequently, \eqref{eq:3.A.144} can be solved using a primal–dual algorithm \cite{condat2013primal} or a double-staged FISTA \cite{bonettini2018inertial}.

\subsection{Convergence}
The solution $f_n^{\star}\in\mathcal{V}_n$ obtained by solving the discretized optimisation problem \eqref{eq:3.A.125} is only an approximation of some $f^{\star}\in\mathcal{V}$. Therefore, for this exact grid-based discretisation to be reliable and practical, one must be confident that, in some sense,
\begin{equation}
\underset{n\to\infty}{\lim} f_{n}^{\star} = f^{\star}.
\end{equation}
This convergence is established in Theorem \ref{th:convergence}.

\begin{Theorem}
    \label{th:convergence}
For any sequence $(f_n^{\star})_{n=1}^{\infty}$
of discretized solutions with $f_n^{\star}\in\mathcal{V}_n,$ and for any solution $f^{\star}\in\mathcal{V}$, the
 following convergences hold:
 \begin{itemize}
     \item [1.] over the loss functional, $\underset{n\to\infty}{\mathrm{lim}}\mathcal{J}(f_n^{\star})=\mathcal{J}(f^{\star})$;
     \item [2.] over the simulated measurements, $\underset{n\to\infty}{\mathrm{lim}}\langle f_n^{\star},\bm{\nu}\rangle=\langle f^{\star},\bm{\nu}\rangle$.
 \end{itemize}
\end{Theorem}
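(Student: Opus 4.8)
The plan is to establish the loss convergence (item 1) by a two-sided sandwich in the style of $\Gamma$-convergence, and then to deduce the measurement convergence (item 2) from a $\w$-compactness argument combined with the strict convexity of the data term. For the lower bound I exploit the fact that the dyadic grids are nested, so that $\M_{\DD,0}^+(\mathcal{X}_n)\subset\M_{\DD,0}^+(\K)$ for every $n$. Since $f_n^{\star}$ minimises $\mathcal{J}$ over the smaller set $\M_{\DD,0}^+(\mathcal{X}_n)$ while $f^{\star}$ minimises over all of $\M_{\DD,0}^+(\K)$, I obtain $\mathcal{J}(f_n^{\star})\geq\mathcal{J}(f^{\star})=:\mathcal{J}^{\star}$ for all $n$, and hence $\liminf_{n}\mathcal{J}(f_n^{\star})\geq\mathcal{J}^{\star}$.

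For the matching upper bound I construct a recovery sequence. Applying the construction of Lemma \ref{lemma:approximation} to $f^{\star}$, but with the dyadic grids $\mathcal{X}_n$ in place of $\frac{1}{n}[1\cdots n]\times\frac{1}{n}[1\cdots n]$, I obtain positive functions $g_n\in\M_{\DD,0}^+(\mathcal{X}_n)$ that are $\w$-convergent to $f^{\star}$ and satisfy $\norm{[\DD]\{g_n\}}_{\M}\to\norm{[\DD]\{f^{\star}\}}_{\M}$ together with $\norm{\nabla\{g_n\}}_{\M^2}\to\norm{\nabla\{f^{\star}\}}_{\M^2}$, whence $\norm{g_n}_{\theta}\to\norm{f^{\star}}_{\theta}$. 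By Proposition \ref{prop:admissible} the measurement operator is $\w$-continuous, so $\langle g_n,\bm{\nu}\rangle\to\langle f^{\star},\bm{\nu}\rangle$, and the continuity of $E$ in its second argument gives $E(\bb{y},\langle g_n,\bm{\nu}\rangle)\to E(\bb{y},\langle f^{\star},\bm{\nu}\rangle)$. Therefore $\mathcal{J}(g_n)\to\mathcal{J}(f^{\star})$. Since each $g_n$ is feasible for the discretised problem at level $n$, we have $\mathcal{J}(f_n^{\star})\leq\mathcal{J}(g_n)$, so $\limsup_{n}\mathcal{J}(f_n^{\star})\leq\mathcal{J}^{\star}$. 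Combining the two bounds proves item 1.

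For item 2, I first note that $\mathcal{J}(f_n^{\star})$ is bounded by item 1, so coercivity yields $\lambda\norm{f_n^{\star}}_{\theta}\leq\mathcal{J}(f_n^{\star})$ bounded, i.e. $(f_n^{\star})$ is bounded in $\norm{\cdot}_{\theta}$ and, by Theorem \ref{th:normequivalence}, in $\norm{[\DD]\{\cdot\}}_{\M}$; likewise the coercivity of $E$ bounds $(\langle f_n^{\star},\bm{\nu}\rangle)$ in $\R^M$. Given any subsequence, Banach--Alaoglu together with the $\w$-closedness of the cone extracts a further subsequence $\w$-converging to some $\bar f\in\M_{\DD,0}^+(\K)$; by $\w$-lower semicontinuity of $\mathcal{J}$ and item 1, $\mathcal{J}(\bar f)\leq\liminf_n\mathcal{J}(f_n^{\star})=\mathcal{J}^{\star}$, so $\bar f\in\mathcal{V}$. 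The strict convexity of $E$ in its second argument forces all minimisers to share the same measurement vector: if $\langle\bar f,\bm{\nu}\rangle\neq\langle f^{\star},\bm{\nu}\rangle$, then $\tfrac12(\bar f+f^{\star})\in\mathcal{V}$ would satisfy $\mathcal{J}(\tfrac12(\bar f+f^{\star}))<\tfrac12\mathcal{J}(\bar f)+\tfrac12\mathcal{J}(f^{\star})=\mathcal{J}^{\star}$, a contradiction. Hence $\langle\bar f,\bm{\nu}\rangle=\langle f^{\star},\bm{\nu}\rangle$, and $\w$-continuity gives $\langle f_n^{\star},\bm{\nu}\rangle\to\langle f^{\star},\bm{\nu}\rangle$ along the subsequence. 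Since every subsequence of the bounded sequence $(\langle f_n^{\star},\bm{\nu}\rangle)\subset\R^M$ thus admits a further subsequence converging to the same limit $\langle f^{\star},\bm{\nu}\rangle$, the whole sequence converges, which is item 2.

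The step I expect to be the main obstacle is the recovery-sequence construction. Lemma \ref{lemma:approximation} is stated for the uniform grids $\frac{1}{n}[1\cdots n]\times\frac{1}{n}[1\cdots n]$, so I must check that its construction transfers to the dyadic grids $\mathcal{X}_n$ while simultaneously preserving positivity and delivering the convergence of \emph{both} the mixed-derivative mass (item 2 of the lemma) and the gradient mass (item 4); only under all three conditions is $g_n$ admissible for $\tilde{\mathcal{V}}_n$ and does $\norm{g_n}_{\theta}\to\norm{f^{\star}}_{\theta}$ hold, which is exactly what the upper bound requires. The remaining ingredients---nesting for the lower bound, $\w$-lower semicontinuity of $\mathcal{J}$, and the strict-convexity uniqueness of the measurements---are standard.
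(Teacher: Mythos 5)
Your proof is correct and follows essentially the same route as the paper's: a sandwich argument for Item 1, using the inclusion $\M_{\DD,0}^+(\mathcal{X}_n)\subset\M_{\DD,0}^+(\K)$ for the lower bound and a recovery sequence built from Lemma \ref{lemma:approximation} for the upper bound, followed by Banach--Alaoglu plus the strict convexity of $E(\bb{y},\cdot)$ for Item 2. The only differences are cosmetic: the paper invokes a cited ``classical argument'' where you spell out the midpoint/strict-convexity contradiction, and it applies Lemma \ref{lemma:approximation} on the dyadic grids without comment --- the grid-transfer issue you flag is resolved by taking the subsequence of the lemma's construction indexed by $2^n$, which places the atoms on the dyadic knots and preserves positivity since the construction samples $f^{\star}$ at grid points.
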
 

\begin{proof}[\textbf{Proof of Theorem \ref{th:convergence}}]
\hfill\\
\textbf{Item 1.} We fix $f^{\star}\in\mathcal{V}$ and use Lemma \ref{lemma:approximation} to find a sequence $(\tilde{f}_n)_{n=1}^{\infty}$ with $\tilde{f}_n\in\M_{\DD,0}^+(\mathcal{X}_n)$ that is $\w$-convergent to $f^{\star}$ and satisfies $\norm{f^{\star}}_{\theta}=\underset{n\to\infty}{\lim}\norm{\tilde{f}_n}_{\theta}$. Then, the $\w$-convergence implies that 
\begin{multline}
\label{eq:3.B.172}
    \underset{n\to\infty}{\lim}E(\bb{y},\langle \tilde{f}_n,\bm{\nu}\rangle) = E(\bb{y},\langle f^{\star},\bm{\nu}\rangle)\\
    \Rightarrow \underset{n\to\infty}{\lim}\mathcal{J}(\tilde{f}_n) = \mathcal{J}(f^{\star}).
\end{multline}
It follows from \eqref{eq:3.B.172} that 
\begin{equation}
    \mathcal{J}(f^{\star}) \leq \mathcal{J}(f_n^{\star}) \leq \mathcal{J}(\tilde{f}_n) \Rightarrow \underset{n\to\infty}{\lim} \mathcal{J}(f_n^{\star}) = \mathcal{J}(f^{\star}).
\end{equation}
\hfill\\
\textbf{Item 2.} The convergence in Item 1 implies that the sequence $(\mathcal{J}(f_n^{\star}))_{n=1}^{\infty}$ is bounded. In turn, this boundedness, combined with the assumptions on $E$, implies that the sequence $(f_n^{\star})_{n=1}^{\infty}$ is bounded in $\norm{\cdot}_{\theta}$, and therefore in $\norm{[\DD]\{\cdot\}}_{\M}$. By the Banach-Alaoglu theorem, we extract a $\w$-convergent subsequence $(f_{n_{m}}^{\star})_{m=1}^{\infty}$ and find that  
\begin{equation}
    \underset{m\to\infty}{\lim} \langle f_{n_m}^{\star},\bm{\nu}\rangle = \langle f^{\star\star},\bm{\nu}\rangle
\end{equation}
for some $f^{\star\star} \in \mathcal{V}$. If one assumes by contradiction that the sequence $(\langle f_{n},\bm{\nu}\rangle)_{n=1}^{\infty}$ does not converge to $\langle f^{\star\star},\bm{\nu}\rangle$, then a classical argument \cite[Theorem 3]{guillemet2025convergence}, which leverages the strict convexity of $E(\bb{y},\cdot)$, leads to a contradiction.
\end{proof}

\section{Application to Denoising}

\subsection{General Description}
Our goal is to denoise an image $\bb{y}$, which is assumed to be square and composed of pixels 
\begin{equation}
    E_{n_1,n_2}^N = \left[\frac{n_1 - 1}{2^N}, \frac{n_1}{2^N}\right] \times \left[\frac{n_2 - 1}{2^N}, \frac{n_2}{2^N}\right]
\end{equation}
at resolution $N$. The continuous-domain optimization problem is formulated as
\begin{equation}
\label{eq:4.A.121}
    \mathcal{V} = \underset{f \in \M_{\DD,0}^+(\K)}{\mathrm{argmin}} \left( \norm{\bb{y} - \bm{\nu}\{f\}}_2^2 + \lambda \norm{f}_{\theta} \right),
\end{equation}
where $\bb{y} \in \R^{2^N \times 2^N}$, and $\forall \bb{n} \in [1 \cdots 2^N]^2$,
\begin{equation}
\label{eq:4.B.155}
    [\bm{\nu}\{f\}]_{\bb{n}} = \langle f, \mathbbm{1}_{E_{\bb{n}}^N} \rangle = \int_{E_{\bb{n}}^N} f(\bb{x}) \mathrm{d}\bb{x}.
\end{equation}

We know from Corollary \ref{coro:opt} that $\mathcal{V}$ is nonempty, compact, and convex, and that its extreme points are of the form $\sum_{k=1}^K a_k \mathbbm{1}_{E_k}$ with $K \leq 2^{2N}$. The discretized optimization problem is defined as
\begin{equation}
\label{eq:4.A.124}
    \mathcal{V}_{n} = \underset{f \in \M_{\DD,0}^+(\mathcal{X}_n)}{\mathrm{argmin}} \left( \norm{\bb{y} - \bm{\nu}\{f\}}_2^2 + \lambda \norm{f}_{\theta} \right),
\end{equation}
and a solution $f^{\star}_n \in \mathcal{V}_n$ can be obtained by solving \eqref{eq:3.A.144}. Finally, from Theorem \ref{th:convergence}, $\forall f^{\star} \in \mathcal{V}$ and $\forall\bb{n} \in [1 \cdots 2^N]^2$, we obtain
\begin{equation}
\label{eq:4.B.155}
    \underset{n \to \infty}{\lim} \int_{E_{\bb{n}}^N} f_n(\bb{x}) \, \mathrm{d}\bb{x} = \int_{E_{\bb{n}}^N} f^{\star}(\bb{x}) \, \mathrm{d}\bb{x}.
\end{equation}

\subsection{Resolution on Finite Grid}
We argue that, to solve the continuous-domain problem in \eqref{eq:4.A.121}, it is sufficient to find the solution $f^{\star}_N \in \mathcal{V}_N$. As a first step, we show in Theorem \ref{th:finitegrid} that the sets of solutions $(\mathcal{V}_n)_{n=N}^{\infty}$ are embedded one into another.

\begin{Theorem}
\label{th:finitegrid}
For all $N_2\geq N_1\geq N$, the inclusions hold true
\begin{equation}
 \mathcal{V}_{N_1}\subset\mathcal{V}_{N_2}\subset\mathcal{V}.   
\end{equation}
\end{Theorem}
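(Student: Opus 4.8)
The plan is to establish the two inclusions $\mathcal{V}_{N_1}\subset\mathcal{V}_{N_2}$ and $\mathcal{V}_{N_2}\subset\mathcal{V}$ separately, with the key structural observation being that the denoising measurement operator $\bm{\nu}$ in \eqref{eq:4.B.155} integrates $f$ against pixel indicators $\mathbbm{1}_{E_{\bb{n}}^N}$ at the \emph{fixed} data resolution $N$. The crucial point is that for any $n\geq N$, a pixel $E_{\bb{n}}^N$ at resolution $N$ decomposes exactly into a union of the finer pixels $E_{\bb{m}}^n$ at resolution $n$; consequently $\mathbbm{1}_{E_{\bb{n}}^N}$ lies in the span of the finer pixel indicators, and the measurement $\langle f,\mathbbm{1}_{E_{\bb{n}}^N}\rangle$ is computable for every $f\in\M_{\DD,0}^+(\mathcal{X}_n)$ without loss of information. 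This is what makes the discretization at resolution $N$ already sufficient to evaluate the data-fidelity term exactly.

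\textbf{First inclusion} $\mathcal{V}_{N_1}\subset\mathcal{V}_{N_2}$. I would observe that the nested dyadic grids give $\mathcal{X}_{N_1}\subset\mathcal{X}_{N_2}$, hence the discretized search spaces embed as $\M_{\DD,0}^+(\mathcal{X}_{N_1})\subset\M_{\DD,0}^+(\mathcal{X}_{N_2})$ (any function whose mixed derivative is supported on the coarse grid is trivially supported on the fine grid). Since both problems minimize the \emph{same} objective $\norm{\bb{y}-\bm{\nu}\{\cdot\}}_2^2+\lambda\norm{\cdot}_{\theta}$ over a larger feasible set at level $N_2$, a solution $f^{\star}\in\mathcal{V}_{N_1}$ is a candidate in the $N_2$ problem; to show it remains optimal there, I need that the two minima coincide, i.e. refining the grid from $N_1$ to $N_2$ does not lower the attainable objective value. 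This follows because any minimizer $g$ at level $N_2$ has, by Theorem~\ref{th:extreme} and Corollary~\ref{coro:opt}, extreme-point solutions that are combinations of indicators of axis-aligned polygons, and one can project or average such a solution back onto the coarser grid without increasing the objective — the key being that the data term depends on $g$ only through the resolution-$N$ averages, which a coarse-grid representative can reproduce while not increasing $\norm{\cdot}_{\theta}$.

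\textbf{Second inclusion} $\mathcal{V}_{N_2}\subset\mathcal{V}$. Here I would argue that the common minimal value over all finite grids equals the continuous-domain minimum $\mathcal{J}^{\star}$. The inequality $\inf_{\mathcal{V}_n}\mathcal{J}\geq\mathcal{J}^{\star}$ is immediate since $\M_{\DD,0}^+(\mathcal{X}_n)\subset\M_{\DD,0}^+(\K)$. For the reverse, the convergence result Theorem~\ref{th:convergence} (Item~1) gives $\lim_n\mathcal{J}(f_n^{\star})=\mathcal{J}(f^{\star})=\mathcal{J}^{\star}$; combined with the monotonicity established in the first inclusion, the sequence of minimal values is constant for $n\geq N$ and equal to $\mathcal{J}^{\star}$, so every $f\in\mathcal{V}_{N_2}$ already achieves the continuous minimum and thus lies in $\mathcal{V}$.

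\textbf{The main obstacle} I anticipate is the coarse-grid projection step in the first inclusion: showing that a fine-grid minimizer can be replaced by a coarse-grid function with the \emph{same} measurements and \emph{no larger} $\norm{\cdot}_{\theta}$. The data constraint fixes only the resolution-$N$ pixel averages, leaving freedom within each coarse pixel; the difficulty is exhibiting the right averaging (likely the conditional-expectation/block-average projection onto piecewise-constant functions on $\mathcal{X}_N$) and verifying that it does not increase the mixed norm — which should hinge on the exact discretization formula \eqref{eq:3.A.135} together with the convexity and the contraction property of averaging under the $\ell_1$-type penalties on $\bm{h}_{1,1}\ast\bb{a}$, $\bm{h}_{1,0}\ast\bb{a}$, and $\bm{h}_{0,1}\ast\bb{a}$.
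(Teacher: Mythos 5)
Your proposal is correct and follows essentially the same route as the paper: its proof introduces exactly the block-averaging (downsampling) operator you describe, shows that it preserves the resolution-$N$ measurements and does not increase $\norm{\cdot}_{\theta}$, and combines this with the nesting $\M_{\DD,0}^+(\mathcal{X}_{N_1})\subset\M_{\DD,0}^+(\mathcal{X}_{N_2})$ and Theorem~\ref{th:convergence} to squeeze the minimal values, precisely as you outline. The ``main obstacle'' you flag, the contraction of the regularizer under block averaging, is indeed the technical core, and the paper settles it exactly by the mechanism you predict: triangle-inequality bookkeeping on the discretized filters $\bm{h}_{1,1}$, $\bm{h}_{1,0}$, $\bm{h}_{0,1}$ applied to the averaged coefficients.
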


\begin{proof}[\textbf{Proof of Theorem \ref{th:finitegrid}}]
We define the downsampling operator
\begin{align}
 \mathrm{R}:\M_{\DD,0}^+(\mathcal{X}_{n})&\to\M_{\DD,0}^+(\mathcal{X}_{n-1})\nonumber\\
    \sum_{\bb{n}=1,1}^{2^n,2^n}\bb{a}[\bb{n}]\mathbbm{1}_{E_{\bb{n}}^{n}}&\mapsto\sum_{\bb{n}=1,1}^{2^{n-1},2^{n-1}}\bb{b}[\bb{n}]\mathbbm{1}_{E_{\bb{n}}^{(n-1)}}
\end{align}
with $\bb{b}[\bb{n}]=\bb{b}[n_1,n_2]$ equal to
\begin{multline}
\label{eq:4.B.131}
  \frac{1}{4}\big(\bb{a}[2n_1-1,2n_2-1]+\bb{a}[2n_1-1,2n_2]\\+\bb{a}[2n_1,2n_2-1]+\bb{a}[2n_1,2n_2]\big).
\end{multline}
\textbf{Step 1.}
We claim that, $\forall f\in\M_{\DD,0}^+(\mathcal{X}_{N_1})$ with $N_1> N$,
\begin{equation}
\label{eq:4.B.132}
\mathcal{J}(\mathrm{R}\{f\})\leq\mathcal{J}(f). 
\end{equation}
A straightforward calculation yields that $\langle f,\mathbbm{1}_{E_{\bb{n}}^N}\rangle$ is equal to $\langle \mathrm{R}\{f\},\mathbbm{1}_{E_{\bb{n}}^N}\rangle$ and, consequently, that 
\begin{equation}
\label{eq:4.B.133}
\norm{\bb{y}-\bm{\nu}\{f\}}_2^2=\norm{\bb{y}-\bm{\nu}\{\mathrm{R}\{f\}\}}_2^2.
\end{equation}
Next, we get from \eqref{eq:3.A.133} that 
\begin{align}
    \norm{[\DD]\{f\}}_{\M}&=\sum_{\bb{n}\in\mathbb{Z}^2}\vert (\bm{h}_{1,1}\ast \bb{a})[\bb{n}]\vert\nonumber\\
    \norm{[\DD]\{\mathrm{R}\{f\}\}}_{\M}&=\sum_{\bb{n}\in\mathbb{Z}^2}\vert (\bm{h}_{1,1}\ast \bb{b})[\bb{n}]\vert\label{eq:4.B.135}.
\end{align}
The injection of \eqref{eq:4.B.131} in \eqref{eq:4.B.135} and the use of several triangle inequalities (9 of them) yield that 
\begin{align}
\vert\bm{h}_{1,1}\ast \bb{b}\vert[n_1,n_2]\leq&
  \frac{1}{4}\vert(\bm{h}_{1,1}\ast\bb{a})[2n_1-1,2n_2-1]\vert\nonumber\\
  +&\frac{1}{2}\vert(\bm{h}_{1,1}\ast\bb{a})[2n_1-1,2n_2]\vert\nonumber\\
  +&\frac{1}{4}\vert(\bm{h}_{1,1}\ast\bb{a})[2n_1-1,2n_2+1]\vert\nonumber\\
  +&\frac{1}{2}\vert(\bm{h}_{1,1}\ast\bb{a})[2n_1,2n_2-1]\vert\nonumber\\
  +&\vert(\bm{h}_{1,1}\ast\bb{a})[2n_1,2n_2]\vert\nonumber\\
  +&\frac{1}{2}\vert(\bm{h}_{1,1}\ast\bb{a})[2n_1,2n_2+1]\vert\nonumber\\
  +&\frac{1}{4}\vert(\bm{h}_{1,1}\ast\bb{a})[2n_1+1,2n_2-1]\vert\nonumber\\
  +&\frac{1}{2}\vert(\bm{h}_{1,1}\ast\bb{a})[2n_1+1,2n_2]\vert\nonumber\\
  +&\frac{1}{4}\vert(\bm{h}_{1,1}\ast\bb{a})[2n_1+1,2n_2+1]\vert.\label{eq:4.B.136}
\end{align}
Finally, the sum of \eqref{eq:4.B.136} over $(n_1,n_2)\in\mathbb{Z}^2$ yields that 
\begin{equation}
\label{eq:4.B.137}
    \norm{[\DD]\{\mathrm{R}\{f\}\}}_{\M}\leq\norm{[\DD]\{f\}}_{\M},
\end{equation}
and a similar calculation would show that 
\begin{equation}
\label{eq:4.B.138}
    \norm{\nabla\{\mathrm{R}\{f\}\}}_{\M^2}\leq\norm{\nabla\{f\}}_{\M^2}.
\end{equation}
The claim \eqref{eq:4.B.132} follows from the combination of \eqref{eq:4.B.133}, \eqref{eq:4.B.137}, and \eqref{eq:4.B.138}.
\hfill\\
\textbf{Step 2.} 
Let $f_{N_1}\in\mathcal{V}_{N_1}$ and $f_{N_2}\in\mathcal{V}_{N_2}$. One can iterate Step 1 to find that 
\begin{equation}
\label{eq:4.B.139}
    \mathcal{J}(f_{N_1})\leq\mathcal{J}(\mathrm{R}^{N_2-N_1}\{f_{N_2}\})\leq\mathcal{J}(f_{N_2}),
\end{equation}
where the LHS inequality follows from the inclusion $\mathrm{R}^{N_2-N_1}\{f_{N_2}\}\in\M_{\DD,0}^+(\mathcal{X}_{N_1})$ and the RHS inequality follows from \eqref{eq:4.B.132}. In addition, it follows from the inclusion 
\begin{equation}
    \M_{\DD,0}^+(\mathcal{X}_{N_1})\subset\M_{\DD,0}^+(\mathcal{X}_{N_2})
\end{equation}
that $\mathcal{J}(f_{N_2})\leq\mathcal{J}(f_{N_1})$, which, combined with \eqref{eq:4.B.139}, yields that $\mathcal{J}(f_{N_2})=\mathcal{J}(f_{N_1})$. This proves the inclusion $\mathcal{V}_{N_1}\subset\mathcal{V}_{N_2}.$
Finally, if $(f_n^{\star})_{n=1}^{\infty}$ is a sequence of solutions with $f_n\in\mathcal{V}_n$, then for some $f^{\star}\in\mathcal{V}$,
\begin{equation}
\label{eq:4.B.141}
    \underset{n\to\infty}{\text{lim}}\mathcal{J}(f_{n}^{\star})=\mathcal{J}(f^{\star}).
\end{equation}
One can iterate \eqref{eq:4.B.132} to find that, $\forall n\geq N_2$,
\begin{equation}
    \mathcal{J}(f^{\star})\leq\mathcal{J}(f_{N_2})\leq\mathcal{J}(\mathrm{R}^{n-N_2}\{f_n\})\leq\mathcal{J}(f_n),\label{eq:152}
\end{equation}
The combination of Equation \eqref{eq:152} and \eqref{eq:4.B.141} yields the inclusion $\mathcal{V}_{N_2}\subset\mathcal{V}.$
\end{proof}Theorem \ref{th:finitegrid} reveals that a solution of the continuous-domain problem can be found by solving the optimization problem on $\M_{\DD,0}^+(\mathcal{X}_N)$. The optimization over $\M_{\DD,0}^+(\mathcal{X}_N)$ has a unique solution.

\begin{Proposition}
\label{prop:unique}
There is a unique solution $f^{\star} \in \mathcal{V}$ whose knots lie on the grid $\mathcal{X}_{N}$, \emph{i.e.},
\begin{equation}
    \mathcal{V}_N = \{f^{\star}\}.
\end{equation}
\end{Proposition}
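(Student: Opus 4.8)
The plan is to push the problem into the finite-dimensional coefficient domain via Proposition~\ref{prop:AnalysisSynthesis}, observe that at the matching resolution the forward operator collapses to a scalar multiple of the identity, and then read off uniqueness from the strict convexity of the data term. First I would invoke Proposition~\ref{prop:AnalysisSynthesis}, which gives $f \in \mathcal{V}_N \Leftrightarrow \mathrm{T}\{f\} \in \tilde{\mathcal{V}}_N$, where $\tilde{\mathcal{V}}_N$ is the minimizer set over $\R^{2^N\times 2^N}_+$ of
\begin{equation}
    G(\bb{a}) = \norm{\bb{y} - \bm{\nu}\{\mathrm{T}^{\star}\{\bb{a}\}\}}_2^2 + \lambda \norm{\bb{a}}_{\theta}.
\end{equation}
Since $\mathrm{T}$ and $\mathrm{T}^{\star}$ are mutual inverses, $\mathcal{V}_N$ is a singleton if and only if $\tilde{\mathcal{V}}_N$ is.

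The crux is that here the synthesis grid $\mathcal{X}_N$ coincides with the measurement grid. Thus $\mathrm{T}^{\star}\{\bb{a}\} = \sum_{\bb{n}} a_{\bb{n}} \mathbbm{1}_{E_{\bb{n}}^N}$, and because the pixels $E_{\bb{n}}^N$ are pairwise disjoint up to a null set, one computes $[\bm{\nu}\{\mathrm{T}^{\star}\{\bb{a}\}\}]_{\bb{m}} = \int_{E_{\bb{m}}^N} \mathrm{T}^{\star}\{\bb{a}\} = 2^{-2N} a_{\bb{m}}$, so that $\bm{\nu} \circ \mathrm{T}^{\star} = 2^{-2N}\,\mathrm{Id}$. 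This linear map is injective, whence the data term $\bb{a} \mapsto \norm{\bb{y} - 2^{-2N}\bb{a}}_2^2$ is both strictly convex and coercive on $\R^{2^N\times 2^N}_+$. The regularizer $\lambda\norm{\bb{a}}_{\theta}$, being a nonnegative combination of $\ell_1$-seminorms of convolutions, is convex (though not strictly so). Consequently $G$ is the sum of a strictly convex, coercive functional and a convex, nonnegative one, hence strictly convex and coercive on the closed convex cone $\R^{2^N\times 2^N}_+$; it therefore admits a unique minimizer $\bb{a}^{\star}$. Pulling back gives $\mathcal{V}_N = \{f^{\star}\}$ with $f^{\star} = \mathrm{T}^{\star}\{\bb{a}^{\star}\}$, and Theorem~\ref{th:finitegrid} yields $\mathcal{V}_N \subset \mathcal{V}$, so $f^{\star} \in \mathcal{V}$, as claimed.

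The main obstacle is conceptual rather than computational: one might worry that the regularizer $\norm{\cdot}_{\theta}$ fails to be strictly convex, so that uniqueness is not automatic. The resolution is to notice that strict convexity of the data term alone suffices, and that this strict convexity is guaranteed precisely by the injectivity of $\bm{\nu}\circ\mathrm{T}^{\star}$. The only point demanding care is therefore establishing that diagonalization, which rests entirely on the synthesis and measurement grids being identical at resolution $N$; at any coarser resolution $n < N$ the operator $\bm{\nu}\circ\mathrm{T}^{\star}$ would no longer be injective and the argument for uniqueness would break down.
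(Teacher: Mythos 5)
Your proof is correct, and it routes the argument slightly differently from the paper. Both proofs pivot on the same key fact, namely that at resolution $N$ the measurement operator is injective on the discretized space (in your coefficient formulation, $\bm{\nu}\circ\mathrm{T}^{\star}=2^{-2N}\,\mathrm{Id}$), but they deploy strict convexity at different levels. The paper stays at the function level: it invokes Theorem~\ref{th:convergence} (whose proof rests on the strict convexity of $E(\bb{y},\cdot)$) to conclude that every solution in $\mathcal{V}$, hence every element of $\mathcal{V}_N\subset\mathcal{V}$, produces one and the same measurement vector $\bb{y}^{\star}$, and then observes that $\bm{\nu}$ is a bijection on $\M_{\DD,0}^{+}(\mathcal{X}_N)$, so at most one such function can exist. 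You instead push everything into $\R_{+}^{2^N\times 2^N}$ via Proposition~\ref{prop:AnalysisSynthesis} and obtain uniqueness from the strict convexity and coercivity of $\bb{a}\mapsto\norm{\bb{y}-2^{-2N}\bb{a}}_2^2$ plus convexity of the regularizer, recovering the inclusion $\mathcal{V}_N\subset\mathcal{V}$ from Theorem~\ref{th:finitegrid} exactly as the paper does. What your version buys is self-containedness: you need neither the unique-measurement-vector property of $\mathcal{V}$ nor the weak$^{\star}$ compactness machinery behind Theorem~\ref{th:convergence}; existence and uniqueness become elementary finite-dimensional facts about a strictly convex, coercive objective on a closed convex cone. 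What the paper's version buys is brevity, since it recycles results already established.

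One correction to your closing remark: the direction of failure is backwards. For a \emph{coarser} grid $n<N$, the map $\bm{\nu}\circ\mathrm{T}^{\star}:\R_{+}^{2^n\times 2^n}\to\R^{2^N\times 2^N}$ is still injective, because a coarse-grid function is determined by its integrals over the finer measurement pixels; your argument would still produce a unique minimizer over $\M_{\DD,0}^{+}(\mathcal{X}_n)$, though that minimizer need not lie in $\mathcal{V}$. Injectivity, and with it the uniqueness argument, genuinely breaks down at \emph{finer} resolutions $n>N$, where the coefficient space has more dimensions than the measurement space. This is consistent with Theorem~\ref{th:finitegrid}, which for $n\geq N$ guarantees only $\mathcal{V}_N\subset\mathcal{V}_n$, not equality.
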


\begin{proof}[\textbf{Proof of Proposition \ref{prop:unique}}]
    We know from Theorem \eqref{th:convergence} that there is a unique value $\bb{y}^{\star} \in \R^{2^N \times 2^N}$ such that, 
    $\forall f^{\star} \in \mathcal{V}_N \subset \mathcal{V}$,
    \begin{equation}
        \bb{y}^{\star} = \bm{\nu}\{f^{\star}\}.
    \end{equation}
    We conclude with the observation that, on $\M_{\DD,0}^{+}(\mathcal{X}_N)$, the measurement operator $\bm{\nu}$ is a bijection. 
\end{proof}

The combination of Theorem \ref{th:finitegrid} and Proposition \ref{prop:unique} yields that \eqref{eq:3.A.144}, specialised to the denoising task, has a unique solution which also happens to be a solution (through the synthesis operator) of the continuous-domain optimization problem.

\subsection{Numerical Experiments}
We fix the data fidelity as the squared difference and study the optimization problem in
\begin{equation}
    \mathcal{V}_{\lambda,\theta} = \underset{f \in \M_{\DD,0}^+(\K)}{\text{argmin}} \left( \frac{1}{2} \norm{\bb{y} - \bm{\nu}\{f\}}_2^2 + \lambda \norm{f}_{\theta} \right),
\end{equation}
where $\bm{\nu}$ is defined in \eqref{eq:4.B.155} and $\bb{y} \in \R^{2^N \times 2^N}$ is the image to denoise. We discretize this optimization problem, as in \eqref{eq:4.A.124}, on the canonical pixel basis $\mathcal{X}_N$ and denote by $f_{\lambda,\theta}^{\star} \in \mathcal{V}_{\lambda,\theta}$ its unique solution (Theorem \ref{th:finitegrid} and Proposition \ref{prop:unique}). Then, we consider the synthesis formulation \eqref{eq:3.A.144} of the discretized optimization problem in
\begin{equation}   
\label{eq:4.C.176}
    \{\bb{a}_{\lambda,\theta}^{\star}\} = \underset{\bb{a} \in \R^{2^{N} \times 2^{N}}_+}{\text{argmin}} \left( \frac{1}{2} \norm{\bb{y} - \bb{a}}_2^2 + \lambda \norm{\bm{h}_{\theta} \ast \bb{a}}_{1} \right),
\end{equation}
whose regularization term has been reparametrized for convenience, with
\begin{equation}
    \bm{h}_{\theta} = \left[\begin{array}{rr}
        \theta \bm{h}_{1,1} \\
        \left(1 - \frac{\theta}{2}\right) \bm{h}_{1,0} \\
        \left(1 - \frac{\theta}{2}\right) \bm{h}_{0,1} \\
    \end{array}\right].
\end{equation}
The convolution in \eqref{eq:4.C.176} is applied element wise on the rows of $\bm{h}_{\theta}.$
Finally, since there is no closed form for the proximal operator of $\lambda \norm{\bm{h}_{\theta} \ast \bb{a}}_{1}$, we solve the dual problem. The latter is optimised with the FISTA algorithm \cite{beck2009fast}. In practice, our method extends without any issues to images that are not square. The denoising efficiency of our method, labelled MTV, is assessed on the BSD68 test set alongside two other benchmark methods. The results are summarised in Table \ref{tab:numbers}.

\begin{table}[h!]
    \centering
    \begin{tabular}{c|c c c c}
    \hline\hline
         & $\sigma=5/255$ & $\sigma=15/255$ & $\sigma=25/255$ &\\ \hline
         TV & 36.41 & 29.91 & 27.48\\
         MTV & 36.83 & 30.20 & 27.72\\
         CRR-NN \cite{goujon2023neural} & 36.96 & 30.55 & 28.11\\
         \hline\hline
    \end{tabular}
    \caption{Denoising performance on the BSD68 test set. The chosen metric is the PSNR, averaged over the 68 images. The parameters $\lambda$ and $\theta$ for TV and MTV have been optimised to maximise the PSNR. The method CRR-NN is included for comparison. In this method, the authors trained 24 filters of size $15 \times 15$, parametrised by neural networks, on a Gaussian-denoising task.
}
    \label{tab:numbers}
\end{table}
To investigate the choice of the hyper-parameter $\theta$, we denote by $\theta^{\star}_{\sigma}$ the value of $\theta$ that maximises the PSNR for a specific image and noise level $\sigma$, and provide in Table \ref{tab:theta} statistics on the distribution of $\theta^{\star}_{\sigma}$.
\begin{table}[h!]
    \centering
    \begin{tabular}{c|c c c c}
    \hline\hline
         & $\theta^{\star}_{5}$ & $\theta^{\star}_{15}$ & $\theta^{\star}_{25}$ & $\vert\theta^{\star}_{15}-\theta^{\star}_{25}\vert$ \\ \hline
        Expectation & 0.37 & 0.33 & 0.33 & 0.04\\
         Standard deviation & 0.07 & 0.08 & 0.12 & 0.04 \\
         \hline\hline
    \end{tabular}
    \caption{Statistics of $\theta^{\star}_{\sigma}$ taken over 68 samples, corresponding to the optimal values of $\theta$ for the 68 images of the BSD68 test set, for each noise level.}
    \label{tab:theta}
\end{table}

We observe from the expectations in Table \ref{tab:theta} that our method does not reduce to the TV one ($\theta=0$). The optimal convex combination of $\nabla$-based and $[\DD]$-based regularizations is non-trivial and appears necessary for efficient denoising of natural images. Furthermore, the column $\vert\theta^{\star}_{15}-\theta^{\star}_{25}\vert$ shows that the choice of the optimal $\theta$ is stable with respect to $\sigma$, which, in turn, implies that the optimal $\theta$ depends mostly on the geometry of the image rather than on the noise level. 

We observe that images in BSD68 for which MTV significantly outperforms TV, by more than $0.5$ dB, typically feature corner-like structures, such as in Image \ref{fig:21}.

\begin{figure}[h]
    \centering
    \includegraphics[width=0.5\linewidth]{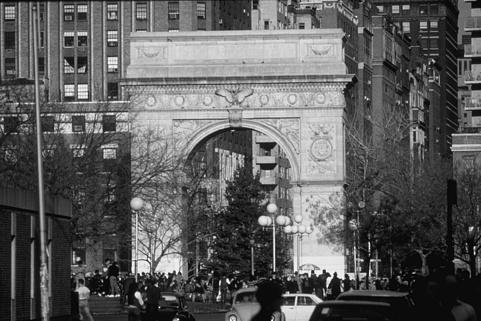}
    \caption{Image 21 in BSD68 dataset. For $\sigma=25$, the PSNR for MTV and TV are, respectively, $25.96$ and $25.29$.}
    \label{fig:21}
\end{figure}

To better illustrate this behaviour, we consider a $722\times 1920$ image of New York City, free of rights, as shown in Figure \ref{fig:ny}. For the noise level $\sigma=25/255$, the TV and MTV methods yield PSNR values of 24.70 and 25.52, respectively. We observe that bands of constant values and corners are reconstructed better with MTV.

\begin{figure*}[tb]
\centering
\label{fig:ny}
\includegraphics[width=1\linewidth]{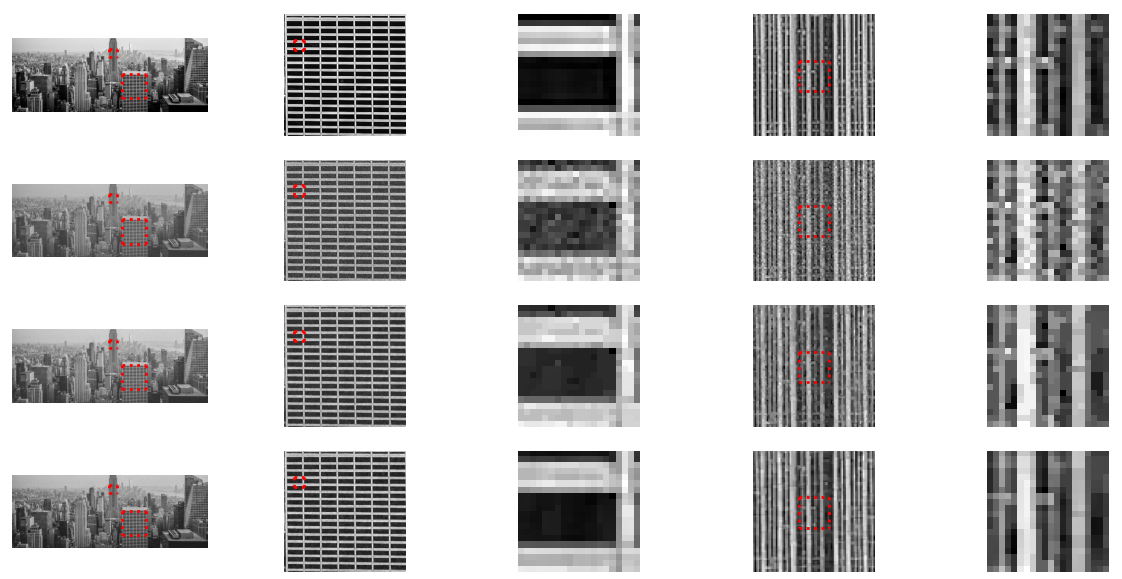}
\caption{From top to bottom, each line corresponds to the ground truth, the noisy image, the TV-denoised image, and the MTV-denoised image. Columns 2 and 4 are zooms of the ground truth. Columns 3 and 5 are zooms of Columns 2 and 4.
}
\end{figure*}

\section{Conclusion}
In this paper, we have shown that the convex combination of the TV regularization with the tensor product term $\norm{[\DD]\{\cdot\}}_{\M}$ promotes piecewise constant functions with edges parallel to the $x_1$ or the $x_2$ axis, as solutions to a continuous-domain optimization problem. In the particular case of denoising, we revealed that the optimization problem discretized on the canonical, pixel-based grid has a unique solution, which is also a solution of the continuous-domain problem. Finally, we demonstrated that the discretized problem can be solved with the same algorithm as for TV, with an additional filter, and that our new regularization significantly improves the reconstruction quality when the image has the appropriate underlying geometry.

\section*{Acknowledgment}
Vincent Guillemet was supported by the Swiss National Science Foundation (SNSF) under Grant 200020\_219356.
\ifCLASSOPTIONcaptionsoff
  \newpage
\fi

\bibliographystyle{IEEEtran}
\bibliography{ref}

\appendix
\subsection{Proofs }

\label{app:proofs}

\begin{proof}[\textbf{Proof of Lemma \ref{lemma:approximation}}]
\hfill\\\textbf{Construction.}
Consider the partition $(A_k^n)_{k=1}^n$ of $[0,1]$ with 
\begin{equation}
    A_{k}^n = \begin{cases}
    \left]\frac{k-1}{n}, \frac{k}{n}\right], & k > 1, \\
    \left[\frac{k-1}{n}, \frac{k}{n}\right], & k = 1,
    \end{cases}
\end{equation}
and the partition $(A_{k,k'}^{n})_{k,k'=1}^{n,n}$ of $\mathrm{K}$ with $A_{k,k'}^{n} = A_k^n \times A_{k'}^n.$ We define $m = [\DD]\{f\}$ and  
\begin{equation}
    f_n = \sum_{k=1}^n \sum_{k'=1}^n m(A_{k,k'}^{n}) \, u\left(\cdot - \frac{k}{n}\right) \otimes u\left(\cdot - \frac{k'}{n}\right).
\end{equation}
We calculate that 
\begin{equation}
    m_n = [\DD]\{f_n\} = \sum_{k=1}^n \sum_{k'=1}^n m(A_{k,k'}^{n}) \, \delta_{\frac{k}{n}} \otimes \delta_{\frac{k'}{n}} \in \M(\K),
\end{equation}
and observe that, by construction, $f_{n}$ is compactly supported in $\K$. Therefore, $f_n \in \M_{\DD,0}(\K)$.
\hfill\\\textbf{Item 1.}
For $[\DD]\{\phi\} \in [\DD]\{\C_0(\R^2)\}$, we calculate that
\begin{align}
    &\vert\langle f-f_n,[\DD]\{\phi\}\rangle\vert\nonumber\\ 
    =& \vert\langle m-m_n,\phi\rangle\vert\nonumber\\
    \leq\;& \sum_{k,k'=1,1}^{n,n} \left\vert \int_{A_{k,k'}^{n}} \phi(\bb{x}) \,\mathrm{d}m(\bb{x}) - m(A_n^k) \phi\left(\frac{k}{n}, \frac{k'}{n}\right) \right\vert\nonumber\\
    \leq\;& \sum_{k',k=1}^{n,n} \int_{A_{k,k'}^n} \left\vert \phi(\bb{x}) - \phi\left(\frac{k}{n}, \frac{k'}{n}\right) \right\vert \mathrm{d}m(\bb{x})\nonumber\\
    \leq\;& \norm{m}_{\M} \underset{\norm{\bb{x}-\bb{t}} \leq \frac{\sqrt{2}}{n}}{\text{sup}} \vert \phi(\bb{x}) - \phi(\bb{t})\vert \underset{n \to \infty}{\to} 0 \label{eq:2.B.42},
\end{align}
because $\phi$ is uniformly continuous. This shows the $\w$-convergence. 
\hfill\\\textbf{Item 2.} The observation that $\norm{m_n}_{\M} \leq \norm{m}_{\M}$ and the $\w$-convergence proved in Item 1 imply that 
\begin{equation}
    \norm{m}_{\M} \leq \underset{n \to \infty}{\text{liminf}} \norm{m_n}_{\M} \leq \underset{n \to \infty}{\text{limsup}} \norm{m_n}_{\M} \leq \norm{m}_{\M}.
\end{equation}
\textbf{Item 3.} We calculate that 
\begin{align}
f_n(x_1,x_2) = \int_{0}^{x_1} \int_{0}^{x_2} \mathrm{d}m_n(t_1,t_2) \leq \norm{m_n}_{\M} \leq \norm{m}_{\M}.
\end{align}
It follows that the family $\{(f_n)_{n=1}^{\infty}, f\}$ is upper-bounded by $\norm{m}_{\M} \mathbbm{1}_{\K}$. 
In addition, for $(x_1,x_2) \in A_{k,k'}^n$, we calculate that 
\begin{align}
    &f(x_1,x_2) - f_n(x_1,x_2)\nonumber\\
    =& m([0,x_1[ \times [0,x_2[) - m\left(\left[0,\frac{k-1}{n}\right] \times \left[0,\frac{k'-1}{n}\right]\right) \label{eq:A.181}\nonumber\\
    =& m\left(\left[0,\frac{k-1}{n}\right] \times \left]\frac{k'-1}{n}, x_2\right[\right) \nonumber \\
    & + m\left(\left]\frac{k-1}{n}, x_1\right[ \times \left[0, \frac{k'-1}{n}\right]\right) \nonumber \\
    & + m\left(\left]x_1, \frac{k-1}{n}\right[ \times \left]x_2, \frac{k'-1}{n}\right[\right) \underset{n \to \infty}{\to} 0.
\end{align}
In \eqref{eq:A.181}, we used the observation that taking the measure $m$ on the closed square or the half-open square yields functions that belong to the same equivalence class. The convergence follows from the continuity from above of measures. Consequently, since the sequence $(f_n)_{n=1}^{\infty}$ is upper-bounded by an $\mathcal{L}_1(\K)$ function and converges pointwise almost everywhere to $f$, we conclude by the Lebesgue dominated convergence theorem (LDC) that it converges in $\mathcal{L}_1(\K)$ to $f$.
\hfill\\\textbf{Item 4:} We first recall that, for $\K\subset\Omega$,
\begin{equation}
    \norm{\nabla\{f_n\}}_{\M^2}=\norm{\nabla\{f_n\}}_{\M(\K)^2}=\norm{\nabla\{f_n\}}_{\M(\Omega)^2}.
\end{equation}
Since $f_n \in \mathrm{BV}(\Omega)$, one has from the theory of BV functions that $\norm{\nabla\{f_n\}}_{\M(\Omega)^2}$ is equal to 
\begin{align} 
\text{sup} \left\{ \int_{\Omega} \nabla\{f_n\}(\bb{x}) \cdot \phi(\bb{x})\,\mathrm{d}\bb{x} : \phi \in \C_c^1(\Omega, \R^2), \norm{\phi} \leq 1 \right\} \label{eq:2.B.52}.
\end{align}
For $\phi \in \C_c^1(\Omega)$, we calculate that 
\begin{align}
    \langle [\mathrm{I} \otimes \Dd] \{f - f_n\}, \phi \rangle &= \langle (u \otimes \delta_0) \ast (m - m_n), \phi \rangle\nonumber \\
    &= \langle m - m_n, (u^{\vee} \otimes \delta_0) \ast \phi \rangle \underset{n \to \infty}{\to} 0,
\end{align}
because, since $m - m_n$ is compactly supported in $\K$, we can use the same argument as in \eqref{eq:2.B.42} with 
$(u^{\vee} \otimes \delta_0) \ast \phi \in \C(\R^2)$. It follows that, $\forall \phi \in \C_c^1(\Omega, \R^2)$,
\begin{equation}
\label{eq:conv.78}
    \langle \nabla\{f_n - f\}, \phi \rangle \underset{n \to \infty}{\to} 0 \Rightarrow \norm{\nabla\{f\}}_{\M^2} \leq \underset{n \to \infty}{\mathrm{liminf}} \norm{\nabla\{f_n\}}_{\M^2}.
\end{equation}
To continue,  we observe that 
\begin{multline}
\label{eq:conv.79}
    \norm{[\mathrm{I}\otimes\Dd]\{f_n\}}_{\M}=\norm{(u\otimes\delta_0)\ast m_n}_{\M}=\\
    \int_{0}^1\norm{m_n([0,t[\times\{\cdot\})}_{\M(\R)}\,\mathrm{d}t.
\end{multline}
Equation \eqref{eq:conv.79} also holds with $f_n$ replaced by $f$, and $m_n$ by $m$. If $t\in A_k^{n},$ then
\begin{align}
&\norm{m_n([0,t[\times\{\cdot\})}_{\M(\R)}\nonumber\\
    =&\sum_{k'=1}^{n}\left\vert m_n\left([0,t[\times\left\{\frac{k'}{n}\right\}\right)\right\vert\nonumber\\
    =&\sum_{k'=1}^{n}\left\vert\sum_{\ell=1}^{k-1}m(A_{\ell,k'}^n)\right\vert\nonumber\\
    =&\sum_{k'=1}^{n}\left\vert m\left(\left[0,\frac{k-1}{n}\right]\times A_{k'}^n\right)\right\vert\nonumber\\
    \leq&\sum_{k'=1}^{n}\left\vert m\left(\left[0,t\right[\times A_{k'}^n\right)\right\vert
    +\sum_{k'=1}^{n}\left\vert m\left(\left]\frac{k-1}{n},t\right[\times A_{k'}^n\right)\right\vert\nonumber\\
    \leq&\norm{m([0,t[\times\{\cdot\})}_{\M(\R)}+\norm{m\left(\left]\frac{k-1}{n},t\right[\times\{\cdot\}\right)}_{\M(\R)}.
\end{align}
It follows that 
\begin{multline}
\label{eq:conv.85}
    \norm{[\mathrm{I}\otimes\Dd]\{f_n\}}_{\M}\leq\norm{[\mathrm{I}\otimes\Dd]\{f\}}_{\M}\\+\int_{0}^1\norm{m(]\text{proj}_n(t),t[\times\{\cdot\})}_{\M(\R)}\,\mathrm{d}t,
\end{multline}
where $\text{proj}_{n}(t)$ returns the largest $(k-1)$ such that $\frac{k-1}{n}\leq t$. We write $T_n=]\text{proj}_n(t),t[$ and observe that a similar argumentation will show that 
\begin{multline}
\label{eq:conv.89}
    \norm{[\Dd\otimes\mathrm{I}]\{f_n\}}_{\M}\leq\norm{[\Dd\otimes\mathrm{I}\{f\}}_{\M}\\+\int_{0}^1\norm{m(\{\cdot\}\times T_n)}_{\M(\R)}\,\mathrm{d}t.
\end{multline}
Next, the combination of \eqref{eq:conv.78}, \eqref{eq:conv.85}, and \eqref{eq:conv.89} yields that 
\begin{equation}
\label{eq:+C}
    \norm{\nabla\{f\}}_{\M^2}
    \leq\underset{n\to\infty}{\mathrm{ limsup}}\norm{\nabla\{f_n\}}_{\M^2}
    \leq\norm{\nabla\{f\}}_{\M^2}+C
\end{equation}
with 
\begin{multline}
    C=\underset{n\to\infty}{\text{limsup }}\int_{0}^1\Big(\norm{m(T_n\times\{\cdot\})}_{\M(\R)}\\+\norm{m(\{\cdot\}\times T_n)}_{\M(\R)}\Big)\,\mathrm{d}t.
\end{multline}
Finally, we observe that 
\begin{equation}
\label{eq:conv.86}
    \Big(\norm{m(T_n \times \{\cdot\})}_{\M(\R)} + \norm{m(\{\cdot\} \times T_n)}_{\M(\R)} \Big) \leq 2 \norm{m}_{\M},
\end{equation}
and that 
\begin{equation}
\label{eq:conv.87}
    \underset{n \to \infty}{\text{lim}} \Big( \norm{m(T_n \times \{\cdot\})}_{\M(\R)} + \norm{m(\{\cdot\} \times T_n)}_{\M(\R)} \Big) = 0.
\end{equation}
because
\begin{equation}
    \bigcap_{n}T_n=\emptyset.
\end{equation}
Equation \eqref{eq:conv.87} follows from the continuity from above of the total variation measure. It follows from \eqref{eq:conv.86} and \eqref{eq:conv.87} that one can apply the LDC to find that $C = 0$. The combination of Equations \eqref{eq:conv.78} and \eqref{eq:+C} imply the desired equality.
\end{proof}

\end{document}